\renewcommand{\to}{\longrightarrow}
\def\Spec{\operatorname{Spec}}
\def\Sing{\operatorname{Sing}}
\def\Pic{\operatorname{Pic}}
\newcommand{\F}{\mathbb{F}}
\newcommand{\C}{\mathbb{C}}
\newcommand{\Q}{\mathbb{Q}}
\newcommand{\cL}{{\mathcal{L}}}
\renewcommand{\P}{\mathbb{P}}
\renewcommand{\tilde}{\widetilde}
\newcommand{\w}{\omega}
\newcommand{\wg}{\omega_{\geq 5}}
\renewcommand{\epsilon}{\varepsilon}
\renewcommand{\phi}{\varphi}
\newcommand{\ind}{\mathrm{ind}}
\newcommand{\tip}{^{\mathrm{tip}}}
\def\gcd{\operatorname{gcd}}
\def\NS{\operatorname{NS}}
\def\Exc{\operatorname{Exc}}
\def\Supp{\operatorname{Supp}}
\def\Bk{\operatorname{Bk}}
\renewcommand{\leq}{\leqslant}
\renewcommand{\geq}{\geqslant}
\newcommand{\Qa}{\mathcal{Q}_{4}}
\theoremstyle{plain}
\newtheorem{tw}{Theorem}[section]
 \makeatletter \@addtoreset{step}{tw}\makeatother
\theoremstyle{definition}
\newtheorem{dfn}[tw]{Definition}
\newtheorem{lem}[tw]{Lemma}
\newtheorem{prop}[tw]{Proposition}
\newtheorem{cor}[tw]{Corollary}
\newtheorem{ex}[tw]{Example}
\newtheorem{conjecture}[tw]{Conjecture}
\newtheorem{notation}[tw]{Notation}
\newtheorem{rem}[tw]{Remark} 
\theoremstyle{remark}
\newtheorem{claim}{Claim}
\newtheorem*{claim*}{Claim}
\def\subsection{\@startsection{subsection}{3}%
	\z@{.5\linespacing\@plus.7\linespacing}{.5\linespacing}%
	{\bfseries\itshape}} \makeatother  
\def\@tocline#1#2#3#4#5#6#7{\relax \ifnum #1>\c@tocdepth \else \par \addpenalty\@secpenalty\addvspace{#2} \begingroup \hyphenpenalty\@M \@ifempty{#4}{\@tempdima\csname r@tocindent\number#1\endcsname\relax}{\@tempdima#4\relax} \parindent\z@ \leftskip#3\relax \advance\leftskip\@tempdima\relax \rightskip\@pnumwidth plus4em \parfillskip-\@pnumwidth #5\leavevmode\hskip-\@tempdima \ifcase #1 \or\or \hskip 1em \or \hskip 2em \else \hskip 3em \fi #6\nobreak\relax \dotfill\hbox to\@pnumwidth{\@tocpagenum{#7}}\par \nobreak \endgroup  \fi}
\makeatletter \renewenvironment{proof}[1][\proofname]{
	\par\pushQED{\qed}\normalfont
	\topsep6\p@\@plus6\p@\relax
	\trivlist\item[\hskip\labelsep\bfseries#1\@addpunct{.}]
	\ignorespaces}{
	\popQED\endtrivlist\@endpefalse} \makeatother
\def\:{\colon}
\numberwithin{equation}{section}
\def\8{\infty}
\begin{document}

\large
\title[Complex planar curves homeomorphic to a line]{Complex planar curves homeomorphic to a line\\ have at most four singular points} 
\author[Mariusz Koras]{Mariusz Koras$^\dagger$}
\address{Mariusz Koras: Institute of Mathematics, University of Warsaw, ul. Banacha 2, 02-097 Warsaw}\email{koras@mimuw.edu.pl}
\thanks{$\dagger$ The first author died when preparing the final version of the manuscript}
\author[Karol Palka]{Karol Palka}
\address{Karol Palka: Institute of Mathematics, Polish Academy of Sciences, ul. \'{S}niadeckich 8, 00-956 Warsaw, Poland}
\email{palka@impan.pl}

\thanks{The first author was supported by the National Science Centre, Poland, Grant No. 2013/11/B/ST1/02977. The second author was partially supported by the Foundation for Polish Science under the Homing Plus programme, cofinanced from the European Union, RDF and by the National Science Centre, Poland, grant No. 2015/18/E/ST1/00562}

\subjclass[2000]{Primary: 14H50; Secondary: 14J26; 14J17; 14E30}

\keywords{Planar curve, cuspidal curve, rational curve, log Minimal Model Program, singularity}

\begin{abstract}We show that a complex planar curve homeomorphic to the projective line has at most four singular points. If it has exactly four then it has degree five and is unique up to a projective equivalence.
\end{abstract}

\maketitle

\section{Main result}

All varieties considered are complex algebraic. 

\smallskip Classical theorems of Abhyankar--Moh \cite{AbMo-approximate_roots} and Suzuki \cite{Suzuki_AMSthm} and Zaidenberg-Lin \cite{LinZaid-LZ_theorem} give a complete description of affine planar curves homeomorphic to the affine line. Every such curve is, up to a choice of coordinates, given by one of the equations $x=0$ or $x^n=y^m$ for some relatively prime integers $n>m\geq 2$; see \cite{GurjarMiyanishi_AMS_and_LZ_thms} and \cite{Palka-AMS_LZ} for proofs based on the theory of open surfaces. In particular, it has at most one singular point. 

An analogous projective problem, the classification of planar curves homeomorphic to the projective line, is much more subtle and remains open. Such curves are necessarily rational and their singularities are cusps (are locally analytically irreducible), hence they are called \emph{rational cuspidal curves}. Apart from being an object of study on its own, they have strong connections with surface singularities via \emph{superisolated singularities} \cite{Luengo-superisolated_sing}. Invariants of those links of superisolated singularities which are rational homology $3$-spheres, including the Seiberg--Witten invariants, are tightly related with invariants of rational cuspidal curves, see \cite{FLMN_cuspidal_curves}, \cite{BaLuMH-superisolated_sing}. This connection has led to counterexamples to several conjectures concerning normal surface singularities \cite{LMN-links_superisolated_sing} and to many proved or conjectural constrains for invariants of rational cuspidal curves \cite{FLMN_cusps_and_open_surfaces}. Deep relations of such curves with low-dimensional topology have been studied from various standpoints, including Heegaard--Floer homology \cite{BoroLivi-HeegaardFloer_and_cusps}, \cite{BarBoSe_HF_and_RCC}, lattice cohomology \cite{BoNe-lattices} or involutive Floer homology \cite{BoHoSch-involutive_Fl_hom-_RCC}. Applications of the semigroup distribution property proved in \cite{BoLi-semigroups_RCC} have led to some partial classification results, see \cite{Liu-thesis}, \cite{Bodnar}. Conjectures concerning free divisors and local cohomology of Milnor algebras have been investigated in \cite{DiSt-free-divisors}, \cite{DiSt-free-divisors2_RCC} and \cite{DiSt3-freenes_RCC}. Despite some partial success of the above approaches, most of important questions remained unanswered.

\smallskip 
Recently, using techniques based on the minimal model program, we proved the Coolidge-Nagata conjecture \cite{Palka-Coolidge_Nagata1}, \cite{KoPa-CooligeNagata2}, which asserts that every rational cuspidal curve can be mapped onto a line by a Cremona transformation of the plane. The proof is necessarily non-constructive and the transformation needed for such a birational rectification may be complicated. An important remaining problem was to bound the number of possible singular points. We prove the following optimal result and we characterize the extremal case.

\begin{tw}\label{thm:at_most_4_cusps} A complex planar curve homeomorphic to the projective line has at most four singular points. If it has exactly four then, up to a choice of coordinates on the plane, it can be parameterized as (cf.\ Example \ref{ex:Q4}) 
\begin{equation}
 [s:t]\to [s^4t:s^2t^3-s^5:t^5+2s^3t^2].
\end{equation}
\end{tw}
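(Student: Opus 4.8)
The plan is to translate the problem into the language of open surfaces and to run a logarithmic minimal model program on the pair coming from the minimal embedded resolution of the curve. Write $E\subset\P^2$ for the curve, $c$ for its number of cusps and $d$ for its degree. Since $E$ is homeomorphic to $\P^1$ it is rational with unibranch singularities, so the geometric genus formula gives $\sum_{i=1}^{c}\delta_i=\tfrac12(d-1)(d-2)$, where $\delta_i$ is the $\delta$-invariant of the $i$-th cusp, and the complement $U=\P^2\setminus E$ is $\Q$-acyclic. Let $\pi\colon X\to\P^2$ be the minimal log resolution and $D=\pi^{-1}(E)\redd$ the reduced total transform, a simple normal crossing divisor whose dual graph records the resolution chains of the cusps together with the strict transform $\tilde E$. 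I would first dispose of the cases $\kappa(U)\leq 1$: there $U$ carries an affine or $\C^{*}$-ruling, the admissible curves are classified, and none of them has four cusps, so one may assume that $U$ is of log general type, $\kappa(U)=2$.

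The core step is to produce a distinguished minimal model of the open surface and to read numerical constraints off it. Using the structure theory of the boundary I would isolate the \emph{bark} $\Bk(D)$ of $D$ (the $\Q$-divisor supported on the maximal twigs and branching parts of the resolution chains) and run the MMP for the half-integral log divisor $K_X+\tfrac12 D$, contracting superfluous $(-1)$-curves while using $\Bk(D)$ to keep track of how the individual cusp chains evolve. The output should be an almost minimal pair $(X',D')$ on which the relevant log canonical divisor is nef, together with a dictionary relating the singular points of $E$ to the maximal twigs of $D'$ and their self-intersection data.

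To obtain the bound I would feed this minimal model into the logarithmic Bogomolov–Miyaoka–Yau inequality in its orbifold form, $\bar c_1^{\,2}\leq 3\bar c_2$, and combine it with the positivity $\bar c_1^{\,2}\geq 0$ forced by $\kappa(U)=2$ and with the additivity of the boundary contributions over the $c$ twigs. Each cusp contributes a definite positive amount to $\bar c_2$ and a controlled amount to $\bar c_1^{\,2}$; summing over the cusps and invoking $e(U)=e(\P^2)-e(E)=1$ should squeeze the number of maximal twigs down to $c\leq 4$. The delicate point is that as $c$ approaches the bound these inequalities become nearly sharp, so the numerical estimate has to be carried out \emph{simultaneously} with the MMP in order to exclude the many a priori possible boundary configurations. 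I expect this case analysis—showing that no simple normal crossing configuration with $c\geq 5$ survives the program while respecting both BMY and the genus constraint $\sum\delta_i=\tfrac12(d-1)(d-2)$—to be the main obstacle, and the step where the bulk of the technical work lies.

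Finally, for $c=4$ the inequalities above become equalities, which should rigidify the minimal model and pin down $d=5$ together with a unique admissible distribution of cusp types compatible with $\sum\delta_i=6$. Reversing the MMP and the resolution then reconstructs $E$ from its frozen boundary data: the four cusps and their tangent directions determine a projective frame, so the curve is unique up to $\mathrm{PGL}_3(\C)$. It remains only to exhibit one such curve, and I would verify directly that the parameterization $[s:t]\mapsto[s^4t:\,s^2t^3-s^5:\,t^5+2s^3t^2]$ is a quintic homeomorphic to $\P^1$ with exactly four cusps realizing the required configuration, thereby identifying it with the curve $\Qa$ of Example \ref{ex:Q4}.
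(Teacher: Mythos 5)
Your overall strategy --- reduce to $\kappa=2$, pass to an almost minimal model of $(X,\tfrac12 D)$, and exploit the logarithmic BMY inequality --- is indeed the skeleton of the paper's argument, but the two places where you defer the work are precisely where the proof lives, and as sketched both steps would fail. First, the claim that summing per-cusp contributions to $\bar c_1^{\,2}$ and $\bar c_2$ ``squeezes'' the count down to $c\leq 4$ is not achievable by BMY bookkeeping alone: that style of argument is what previously gave $c\leq 6$, and in this paper it only yields $c\leq 5$ (Proposition \ref{prop:c<=5}). The essential new input is Theorem \ref{thm:almost_minimal}: for $c\geq 4$ the pair $(X_0,\tfrac12 D_0)$ coming from the minimal \emph{weak} resolution is already almost minimal, i.e.\ the minimalization contracts no curve outside the boundary. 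Proving this occupies most of the paper (Lemmas \ref{lem:basic_bounds}--\ref{lem:n=1_cases} and the eleven Claims in the proof of Theorem \ref{thm:almost_minimal}), and it is what makes the inequality \eqref{eq:lambda} strong enough to be useful; your sketch contains no mechanism for establishing it. Even granting it, the elimination of $c=5$ is not done by BMY but by enumerating Hamburger--Noether pairs, computing the degree from the genus formula, and applying the Hurwitz formula to pencils of lines through a cusp.

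Second, the uniqueness for $c=4$ does not follow from ``inequalities becoming equalities.'' After all numerical constraints are imposed, twelve distinct Hamburger--Noether configurations (cases (a)--(l) in the final proof) survive, each with a consistent degree and $p_2$; eleven of them are eliminated one at a time by genuinely geometric arguments --- producing $(-1)$-curves from lines through pairs of cusps or tangent lines at a cusp and contradicting BMY for the enlarged boundary, invoking the Matsuoka--Sakai inequality, or applying the Hurwitz formula to auxiliary $\P^1$-fibrations and elliptic fibrations. Your appeal to ``the four cusps and their tangent directions determine a projective frame'' is not an argument: the actual uniqueness up to projective equivalence is obtained by an explicit Cremona transformation of the surviving configuration onto a tricuspidal quartic (or by the known classification of rational cuspidal quintics), not by a frame count.
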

\smallskip 

The problem of bounding the number of cusps of rational cuspidal curves was posed by Sakai \cite[p.\ 14]{Zaid-open_MONTREAL_problems} and specific conjectures were made, among others, by Orevkov and Piontkowski \cite{Piontkowski-number_of_cusps}. Orevkov and Zaidenberg proved that rigid rational cuspidal curves have at most $9$ cusps \cite{OrZa-rigid_cusp}. The absolute bound by $8$ was obtained by \cite{Tono-number_of_cusps} using a logarithmic Noether inequality. Later the second author improved it to $6$ \cite[Theorem 1.4]{Palka-minimal_models}.  Results of this type do not seem to be available using the more topologically-oriented methods discussed above, at least at their current stage of development. 

\smallskip  Numerous families of rational cuspidal curves with a small number of cusps have been constructed by various authors and some partial classification results were obtained. For uni- and bi-cuspidal curves see \cite{Kashiwara}, \cite{Yos-cusp}, \cite{Fenske_1and2-cuspidal_curves}, \cite{Tono-equations_cusp_curves}, \cite{Tono_1cusp_with_kod_1}, \cite{OrevkovCurves}, \cite{FLMN_one_pair},  \cite{Tono_nie_bicuspidal}, \cite{Bodnar}. The richest source of examples are closures of $\C^*$-embeddings into $\C^2$ \cite{BoZo-annuli}, \cite{CKR-Cstar_good_asymptote}, but even those are not understood completely. Known examples of rational cuspidal curves with three cusps are: Namba's tricuspidal quintic \cite[2.3.10.8]{Namba_geometry_of_curves}, two discrete series discovered by Flenner-Zaidenberg \cite{FLZa-_class_of_cusp}, \cite{FlZa_cusps_d-3} and one discovered by Fenske \cite{Fenske_cusp_d-4}. Inequalities bounding the degree of the curve in terms of the maximal multiplicity of its cusps were proved in \cite{MaSa-degree_of_cusp_curves} and \cite{OrevkovCurves}.


\medskip

Finally, let us comment on a recent progress on the classification problem for rational cuspidal curves. A well understood case is when the surface $\P^2\setminus \bar E$ is not of log general type, i.e.\ when $\kappa(K_X+D)\leq 1$, where $(X,D)\to (\P^2,\bar E)$ is the minimal log resolution of singularities. Here the global structure of $\P^2\setminus \bar E$ is known \cite[Proposition 2.5]{Palka-Coolidge_Nagata1} and in fact curves of this type have been classified, see \cite{FLMN_cuspidal_curves} for a review. In particular, they have at most two cusps by \cite{Wakabayashi-cusp}. The remaining case $\kappa(K_X+D)=2$ is difficult. So far, the most successful approach is a careful analysis of possible runs of the minimal model program for the pair $(X,\frac{1}{2}D)$ in terms of \emph{almost minimal models}, as defined in \cite{Palka-minimal_models}. We extend this method in the current article. We mention the following key conjecture, which remains open. Note that for $(X,D)$ as above the affine surface $X\setminus D=\P^2\setminus \bar E$ is $\Q$-acyclic. 
\begin{conjecture}[Negativity Conjecture, \cite{Palka-minimal_models}] \label{con:negativity}
If $(X,D)$ is a smooth completion of a $\Q$-acyclic surface then 
\begin{equation}
\kappa(K_X+\tfrac{1}{2}D)=-\8.
\end{equation}
\end{conjecture}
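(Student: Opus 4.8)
The plan is to argue by contradiction. Since $2(K_X+\tfrac12 D)=2K_X+D$ is integral and the Iitaka dimension is unchanged by a positive rational multiple, $\kappa(K_X+\tfrac12 D)=\kappa(2K_X+D)$, so the assertion is equivalent to $H^0(X,m(2K_X+D))=0$ for all $m\geq 1$. Suppose instead $\kappa(K_X+\tfrac12 D)\geq 0$, so that $K_X+\tfrac12 D$ is $\Q$-effective, hence pseudo-effective. First I would normalize the completion: passing to the minimal log smooth model, one checks that contracting a $(-1)$-curve $E\subseteq D$ with $\beta\de E\cdot(D-E)\leq 2$ replaces $K_X+\tfrac12 D$ by its pushforward plus $\tfrac{3-\beta}{2}E$, a positive multiple of the exceptional curve; such a contraction therefore preserves both $\kappa(K_X+\tfrac12 D)$ and $\Q$-acyclicity of the complement. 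Thus one may assume $D$ is SNC and minimal.

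Because $\tfrac{m}{2}D\geq 0$, every section of $m(K_X+\tfrac12 D)$ includes into $H^0(m(K_X+D))$, so $\kappa(K_X+\tfrac12 D)\leq\kappa(K_X+D)=\bar\kappa$; the case $\bar\kappa=-\8$ is then immediate and contradicts the hypothesis. When $\bar\kappa\in\{0,1\}$ the global structure of the $\Q$-acyclic surface $X\setminus D$ is known (compare \cite[Proposition 2.5]{Palka-Coolidge_Nagata1}), and in each model one verifies directly that $2K_X+D$ carries no nonzero section, again a contradiction. The substantive case is $\bar\kappa=2$, where I would run the logarithmic minimal model program for the pair $(X,\tfrac12 D)$ via the \emph{almost minimal models} of \cite{Palka-minimal_models}. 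As $K_X+\tfrac12 D$ is pseudo-effective, the program cannot terminate in a Mori fibre space, so it produces an almost minimal model $(X',\tfrac12 D')$ on which the peeled divisor $K_{X'}+\tfrac12 D'-\tfrac12\Bk D'$ is nef, while $\kappa(K_{X'}+\tfrac12 D')=\kappa(K_X+\tfrac12 D)\geq 0$.

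The contradiction should now come from a numerical inequality on the almost minimal model. $\Q$-acyclicity forces $e(X\setminus D)=1$ and pins down the first Betti number and the Picard number of $X$ in terms of the number of components of $D$, which is a rational tree; these data are tracked through the program. A logarithmic Bogomolov--Miyaoka--Yau / Langer orbifold inequality applied to $(X',\tfrac12 D')$ bounds $(K_{X'}+\tfrac12 D'-\tfrac12\Bk D')^2$ from above by three times an orbifold Euler characteristic whose topological part equals $e(X\setminus D)=1$, while nefness yields the opposite bound $(K_{X'}+\tfrac12 D'-\tfrac12\Bk D')^2\geq 0$. Together with $\kappa\geq 0$, reconciling these with the local contributions of the boundary and of the loci contracted during almost minimalization is intended to be impossible.

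The main obstacle is exactly this last reconciliation. The local contributions to the orbifold Euler number coming from complicated boundary forks and from the chains contracted during almost minimalization can be large, and the available inequalities --- the same circle of ideas that yields the bound of six cusps in \cite[Theorem 1.4]{Palka-minimal_models} --- fall just short of forcing a contradiction in complete generality. Closing the gap appears to require either a sharper orbifold inequality tailored to the coefficient $\tfrac12$, or a finer combinatorial control, using $\Q$-acyclicity, of the number and type of boundary components surviving on the almost minimal model. This is the step I expect to be genuinely hard, and it is the reason the statement stands as a conjecture rather than a theorem.
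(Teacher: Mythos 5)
The statement you are trying to prove is not a theorem of the paper at all: it is the Negativity Conjecture, which the paper explicitly states as an \emph{open} problem (quoting the paper: ``We mention the following key conjecture, which remains open''). The paper contains no proof of it, and its main result (Theorem \ref{thm:at_most_4_cusps}) is deliberately proved without assuming it. So there is nothing in the paper to compare your argument against, and your write-up must stand on its own as a complete proof --- which it does not.

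The genuine gap is the one you yourself concede in your final paragraph. Your reductions are individually sound: the identity $\kappa(K_X+\tfrac12 D)=\kappa(2K_X+D)$, the computation that contracting a superfluous $(-1)$-curve $E$ with $\beta_D(E)\leq 2$ changes $K_X+\tfrac12 D$ by $\tfrac{3-\beta}{2}E$ (so $\kappa$ is preserved), the inequality $\kappa(K_X+\tfrac12 D)\leq\kappa(K_X+D)$, and the passage to an almost minimal model with nef peeled divisor when $\kappa(K_X+\tfrac12 D)\geq 0$ (this is exactly Corollary \ref{cor:almost_min_models}(b) of the paper). But the entire content of the conjecture is concentrated in the step you describe as ``reconciling'' the log Bogomolov--Miyaoka--Yau bound with nefness and the local boundary contributions, and for that step you offer no argument --- only the observation that the known inequalities ``fall just short.'' That is precisely why the statement is a conjecture: the paper itself uses this same circle of ideas (almost minimal models, the inequality \eqref{eq:ind}, the inequality \eqref{eq:basic_bound}) and obtains strong consequences for rational cuspidal curves, but not the Negativity Conjecture. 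In addition, your treatment of the intermediate cases $\kappa(K_X+D)\in\{0,1\}$ is only asserted (``one verifies directly''), not carried out; that verification is genuinely case-by-case work over the known structure theorems, and a complete proof would have to include it. As written, your text is an honest research plan, not a proof.
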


\noindent As discussed in Conjecture 2.5 loc.\ cit., the conjecture generalizes both the weak rigidity conjecture by Flenner-Zaidenberg \cite[p.\ 16]{Zaid-open_MONTREAL_problems} and the Coolidge-Nagata conjecture. T. Pełka and the second author have already shown that the approach with almost minimal models is very effective: they classified up to a projective equivalence all rational cuspidal curves with a complement of log general type under the assumption that for such complements the Negativity Conjecture holds \cite{PaPe_cuspidal-Cstst-fibrations}, \cite{PaPe_cuspidal-delPezzo}. In particular, in that case the topology of singularities completely determines the class of a projective equivalence. The classification implies also that there are no tricuspidal rational curves satisfying the Negativity Conjecture other than the ones discovered already. Summarizing, given the above theorem, to have a complete understanding of rational cuspidal curves it remains now to prove the above conjecture for complements of those with at most $3$ cusps.

\smallskip 
As suggested by Conjecture \ref{con:negativity}, properties of the divisor $K_X+\frac{1}{2}D$ and the associated almost minimal model play a major role in the proof of Theorem \ref{thm:at_most_4_cusps}. The key step, which takes most of our effort, is Theorem \ref{thm:almost_minimal}, saying that for a rational cuspidal curve $\bar E\subseteq \P^2$ with at least four cusps the surface $(X_0,\frac{1}{2}D_0)$, where $(X_0,D_0)\to (\P^2,\bar E)$ is a minimal weak resolution, is almost minimal. In contrast, in case of at most three cusps the process of almost minimalization may contract zero, two, three or four curves not contained in the boundary, see figures for various types of curves drawn in \cite[Section 3]{PaPe_cuspidal-Cstst-fibrations} and Lemmas 3.8(e), 3.12(f) in \cite{PaPe_cuspidal-delPezzo}.

\smallskip 

We thank Maciej Borodzik for helpful remarks and we thank Tomasz Pełka for a careful reading of a preliminary version of the manuscript.
\clearpage
\tableofcontents

\section{Preliminaries}\label{sec:prelim}

In this article curves are irreducible and reduced. For a reduced divisor $T$ on a smooth projective surface we denote the number of its irreducible components by $\#T$ and we put \begin{equation}\label{eq:d(T)}
d(T)=\det(-Q(T)), 
\end{equation} where $Q(T)$ is the intersection matrix of $T$; we put $d(0)=1$. We define the \emph{arithmetic genus of $T$} as $p_a(T)=\frac{1}{2}T\cdot(K+T)+1$, where $K$ is the canonical divisor and we define the \emph{branching number} of a component $C\subseteq T$ by \begin{equation}\label{eq:beta}
\beta_T(C):=C\cdot (T-C).
\end{equation} 
We say that $C$ is a \emph{branching} component of $T$ if $\beta_T(C)\geq 3$ and that it is a \emph{tip} of $T$ if $C\neq 0$ and $\beta_T(C)\leq 1$. If $C\subseteq T$ is a $(-1)$-curve such that $0<\beta_T(C)\leq 2$ and $C$ meets other components of $D$ normally (transversally) and at most once each then we say that $C$ is \emph{superfluous} in $D$. If $T$ is a simple normal crossing (snc) divisor we say that it is \emph{snc-minimal} if it contains no superfluous $(-1)$-curves. A \emph{rational tree} is a reduced connected divisor of arithmetic genus zero. It is an snc-divisor with a simply connected support. It is a (rational) \emph{chain} if it has no branching component. A rational chain is \emph{admissible} if it has a negative definite intersection matrix and contains no $(-1)$-curve.

Assume $T$ is an admissible chain. It can be written as  $T=T_1+\cdots+ T_k$, $k\geq 0$ where $T_i\cong \P^1$, $i=1,\ldots,k$ are its irreducible components, $T_i\cdot T_{i+1}=1$ for $i\leq k-1$ and $T_i\cdot T_j=0$ if $|i-j|>1$. Once the order of components as above is fixed, we write $T=[-T_1^2,\ldots,-T_k^2]$. Since $T$ is admissible, we have $T_i^2\leq -2$ for $i\in\{1,\ldots,k\}$. We put $\delta(T)=\frac{1}{d(T)}$. In particular, $\delta([(2)_k])=\frac{1}{k+1}$, where $(2)_k$ denotes the sequence $(2,\ldots,2)$ of length $k$. Following \cite{Fujita-noncomplete_surfaces} we define the \emph{bark} of $T$ as 
\begin{equation}\label{eq:Bk}
\Bk' T=\sum_{i=1}^k \frac{d(T_{i+1}+\cdots+T_k)}{d(T)}\ T_i .
\end{equation}
and the \emph{inductance} of $T$ as 
\begin{equation}\label{eq:inductance}
\ind(T)=\frac{d(T-T_1)}{d(T)}.
\end{equation}
We check that $T_i\cdot \Bk' T$ equals $-1$ if $i=1$ and equals $0$ otherwise and that $(\Bk' T)^2=-\ind(T)$. 

For a general reduced divisor $T$, an ordered rational chain $T_1+\ldots+T_k\subseteq T$ as above is called a rational \emph{twig} of $T$ if $T_1$ is a tip of $T$ and $\beta_T (T_i)=2$ for $i>1$. A rational tree $T$ is a (rational) \emph{fork} if it has exactly one branching component and three maximal twigs (maximal in the sense of inclusion of supports). Assuming $T$ is snc-minimal, connected and does not contract to a quotient singular point (that is, it is neither $[1]$, nor a rational admissible chain nor a rational fork with admissible twigs and a negative definite intersection matrix), we define $\delta(T)$, $\Bk'(T)$ and $\ind(T)$ as the sum of respective quantities computed for all maximal admissible twigs of $T$. For a general definition covering the case of resolutions of quotient singularities see \cite[\S 2]{Miyan-OpenSurf}.

If two trees $T_1$, $T_2$ meet normally at a unique point then, denoting the components which have a common point by $C_j\subseteq T_j$, $j=1,2$, from elementary properties of determinants we infer the following formula:
\begin{equation}\label{eq:d(T1+T2)} d(T_1+T_2)=d(T_1)d(T_2)-d(T_1-C_1)d(T_2-C_2).
\end{equation}

\begin{lem}\label{lem:ind} Let $R=[a_1,\ldots,a_k]$ be an admissible chain and let $R'=[a_1,\ldots,a_{k-1}]$. Then $\ind(R)> \ind(R')$.
\end{lem}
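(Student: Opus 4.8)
The plan is to clear denominators and reduce the strict inequality to the positivity of a single determinantal expression, and then to show that this expression is identically equal to $1$ by a one-step induction on the length $k$ using only the splitting formula \eqref{eq:d(T1+T2)}. Throughout I assume $k\geq 2$ (for $k=1$ the chain $R'$ is empty and the statement is degenerate). First I would unwind the definitions: writing $R=[a_1,\ldots,a_k]$ and $R'=[a_1,\ldots,a_{k-1}]$ with all $a_i\geq 2$, formula \eqref{eq:inductance} gives $\ind(R)=d([a_2,\ldots,a_k])/d(R)$ and $\ind(R')=d([a_2,\ldots,a_{k-1}])/d(R')$. Since $R$ is admissible, $-Q(R)$ is positive definite, so $d(R)=\det(-Q(R))>0$, and the same holds for every contiguous subchain (these are again admissible, being principal submatrices with entries $\geq 2$). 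Hence all determinants below are positive, and after clearing the common positive denominator $d(R)d(R')$ the inequality $\ind(R)>\ind(R')$ becomes $N_k>0$, where
\[
N_k := d([a_2,\ldots,a_k])\, d([a_1,\ldots,a_{k-1}]) - d([a_2,\ldots,a_{k-1}])\, d([a_1,\ldots,a_k]).
\]

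Next I would establish the three-term recursion $d([b_1,\ldots,b_m])=b_m\,d([b_1,\ldots,b_{m-1}])-d([b_1,\ldots,b_{m-2}])$ by applying \eqref{eq:d(T1+T2)} with $T_1=[b_1,\ldots,b_{m-1}]$ and $T_2=[b_m]$, so that $C_1$ is the last component of $T_1$, $T_2-C_2=0$, and $d(0)=1$. Substituting this recursion in the variable $a_k$ into the two determinants $d([a_2,\ldots,a_k])$ and $d([a_1,\ldots,a_k])$ occurring in $N_k$, the two terms carrying the factor $a_k$ cancel, and what remains is exactly the expression $N_{k-1}$. Thus $N_k=N_{k-1}$ for $k\geq 3$, while a direct evaluation of the base case gives $N_2=d([a_2])\,d([a_1])-d(\emptyset)\,d([a_1,a_2])=a_1a_2-(a_1a_2-1)=1$. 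Therefore $N_k\equiv 1$, and in fact one obtains the clean closed form $\ind(R)-\ind(R')=1/(d(R)\,d(R'))>0$, which proves the lemma with a strictness that comes for free.

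I do not expect a serious obstacle; the difficulty is entirely one of recognizing the right formulation. The key observation—that the $a_k$-terms cancel so that $N_k$ does not depend on $k$—is what converts the problem from a continued-fraction monotonicity estimate into a one-line constant evaluation. The only points requiring care are bookkeeping: fixing the conventions $d(0)=1$ for the empty chain (and the vanishing of determinants for index ranges that run past empty) so that the recursion and the cancellation remain valid down to the base case, and invoking negative-definiteness to guarantee positivity of every subchain determinant. An alternative route would be to write $1/\ind([a_1,\ldots,a_k])$ as the continued fraction $a_1-1/(a_2-1/(\cdots-1/a_k))$ and prove monotonicity by descending induction using that $x\mapsto -1/x$ is increasing on $x>0$; this works but forces one to track positivity of all partial convergents, so I would prefer the self-contained determinantal argument above.
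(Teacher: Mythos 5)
Your proof is correct, and it takes a genuinely different route from the paper's. The paper uses \eqref{eq:d(T1+T2)} to get the reciprocal recursion $\ind([a_1,\ldots,a_k])^{-1}=a_1-\ind([a_2,\ldots,a_k])$ and then inducts on the length by stripping the \emph{first} component — this is precisely the continued-fraction monotonicity argument you mention at the end and set aside (and it does require, as you note, knowing that the inductances of admissible chains lie in $(0,1)$ so that the reciprocals can be compared). Your main argument instead expands in the \emph{last} component, shows the two-term determinant $N_k$ is independent of $k$, and evaluates $N_2=1$; I checked the cancellation and it works, including the boundary conventions $d(0)=1$ and $d=0$ one step further down, which match the paper's $d'(0)=0$. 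What your version buys is the exact identity $\ind(R)-\ind(R')=1/\bigl(d(R)\,d(R')\bigr)$ — the chain analogue of $p_kq_{k-1}-p_{k-1}q_k=\pm1$ for continued-fraction convergents — which is strictly stronger than the lemma and quantifies how fast the inductance grows; the paper's version is shorter to write given that it has already introduced $d'$ and $d''$. The only cosmetic remark: the $k=1$ case is not really degenerate ($\ind([a_1])=1/a_1>0=\ind(0)$, with the convention that the empty divisor has inductance $0$), but nothing in the paper relies on it.
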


\begin{proof} 
Put $d'([a_1,a_2,\ldots,a_k]):=d([a_2,\ldots,a_k])$ and $d''([a_1,a_2,\ldots,a_k]):=d'([a_2,\ldots,a_k])$, where by definition $d'(0)=0$. By \eqref{eq:d(T1+T2)}
\begin{equation}\label{eq:d_recurrence}
d(R)=a_1d'(R)-d''(R),
\end{equation}
hence $\ind([a_1,\ldots,a_k])^{-1}=a_1-\ind([a_2,\ldots,a_k])$. Using this formula we proceed by induction on $\#R$.
\end{proof}  

\begin{lem}[\cite{Fujita-noncomplete_surfaces}, 4.16] \label{lem:Sigma} Fix a $\P^1$-fibration of a smooth projective surface $X$ and a reduced divisor $D$ on $X$. For every fiber $F$ denote by $\sigma(F)$ the number of components of $F$ not contained in $D$ and put $\displaystyle \Sigma=\sum_{F\nsubseteq D}(\sigma(F)-1)$. Let $h$ and $\nu$ denote, respectively, the number of horizontal components of $D$ and the number of fibers contained in $D$. Then 
\begin{equation}
\Sigma=h+\nu-2+\rho(X)-\#D,
\end{equation}
where $\rho(X)$ is the Picard rank of $X$.
\end{lem}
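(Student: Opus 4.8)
The plan is to reduce the whole statement to a single count of the Picard rank in terms of fibre components, and then to distribute the components of $D$ and the fibres by elementary bookkeeping. First I would fix the fibration $\pi\colon X\to B$ onto a smooth projective curve $B$ (replacing $B$ by the base of the Stein factorization if necessary, so that the fibres are connected) and record the numerical input: for any smooth projective curve $\rho(B)=1$, and for a geometrically ruled surface (a $\P^1$-bundle) $\bar X\to B$ one has $\rho(\bar X)=\rho(B)+1=2$, because $\NS(\bar X)$ is freely generated by the class of a section and the class of a fibre.

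The key step is the identity
\[
\rho(X)=2+\sum_{F}(\#F-1),
\]
where the sum runs over all fibres of $\pi$ (only the finitely many reducible ones contribute) and $\#F$ counts reduced irreducible components. Any $\P^1$-fibration is obtained from a relatively minimal model $\bar X\to B$, which by the classification of relatively minimal ruled surfaces is a $\P^1$-bundle, by a finite sequence of point blow-ups. Each blow-up raises $\rho$ by one and, since its exceptional curve becomes a new component of the fibre through the blown-up point, raises $\sum_F\#F$ by exactly one while leaving the number of fibres unchanged. Starting from $\rho(\bar X)=2$ with all fibres irreducible, induction on the number of blow-ups yields the displayed formula.

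The rest is substitution. For each fibre $F$ write $\#F=\sigma(F)+d(F)$, where $d(F)$ is the number of components of $F$ contained in $D$; summing $d(F)$ over all fibres counts exactly the vertical components of $D$, so $\sum_F d(F)=\#D-h$. The fibres with $F\subseteq D$ are precisely those with $\sigma(F)=0$, and there are $\nu$ of them, so $\#F-1=d(F)-1$ for such $F$, while $\#F-1=(\sigma(F)-1)+d(F)$ for $F\nsubseteq D$. Feeding this into the key formula and collecting terms,
\[
\rho(X)-2=\Big(\sum_{F\subseteq D}d(F)-\nu\Big)+\Big(\Sigma+\sum_{F\nsubseteq D}d(F)\Big)=(\#D-h)-\nu+\Sigma,
\]
which rearranges at once to $\Sigma=h+\nu-2+\rho(X)-\#D$.

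The only genuinely non-formal point is the Picard-rank formula of the second step; everything afterwards is arithmetic. The main thing to watch is that $\#F$ and $\sigma(F)$ denote numbers of reduced irreducible components, independent of fibre multiplicities, so that both the blow-up induction and the decomposition $\#F=\sigma(F)+d(F)$ are honest component counts rather than cycle-theoretic ones.
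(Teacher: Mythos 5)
Your proof is correct. The paper gives no argument of its own here — it simply cites Fujita (4.16) — and your derivation (reduce to the standard count $\rho(X)=2+\sum_F(\#F-1)$ via blow-downs to a relatively minimal model, then split $\#F=\sigma(F)+d(F)$ and use $\sum_F d(F)=\#D-h$) is precisely the standard bookkeeping proof of that result, with all the component counts handled correctly.
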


\begin{lem}[Hurwitz formula]\label{lem:Hurwitz}
Let $F$ be a scheme-theoretic fiber of some fibration of a smooth projective surface and let $E$ be a rational cuspidal curve not contained in fibers. Denote by $r_F(q)$ the ramification index at $q\in E$ of the restriction of the fibration to $E$. Then for every $p\in E$ we have
\begin{equation}
\#\{q\in E\setminus \{p\}:r_F(q)\geq 2\}\leq \sum_{q\in E\setminus\{p\}}(r_F(q)-1)=2F\cdot E-1-r_F(p).
\end{equation}
In particular, $\#\Sing E\leq 2F\cdot E-r_F(p)$.
\end{lem}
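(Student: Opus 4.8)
My plan is to reduce the statement to the classical Riemann--Hurwitz formula applied to the map induced on the normalization of $E$. Write $\pi\colon X\to B$ for the fibration, so $F=\pi^{-1}(b)$ for some $b\in B$, and let $n\colon \tilde E\to E$ be the normalization. Since $E$ is rational cuspidal, $\tilde E\cong\P^1$ and, the singularities being unibranch, $n$ is a bijection on points; I will identify each $q\in E$ with its unique preimage in $\tilde E$. As $E$ is not contained in a fiber, the composition $\phi=\pi\circ n\colon \tilde E\to B$ is a non-constant, hence finite, morphism, and its degree is the intersection number of $E$ with a general fiber, namely $F\cdot E$. Because $\tilde E\cong\P^1$ dominates $B$, the base is rational, so $B\cong\P^1$ and $g(B)=0$; thus I never need to assume anything about the base curve.

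The first substantive step is to read off $r_F(q)$ as the genuine ramification index of $\phi$ at $q\in\tilde E$: if $F$ is reduced at $q$ with local equation $u=0$, then the multiplicity of $q$ in $\phi^{*}(b)=n^{*}F$ is $\operatorname{ord}_q(u\circ n)=r_F(q)$, and summing over the fiber of $\phi$ recovers $F\cdot E=\deg\phi$. Applying Riemann--Hurwitz to $\phi$ with $g(\tilde E)=g(B)=0$ gives
\begin{equation*}
-2=-2\,(F\cdot E)+\sum_{q\in E}\bigl(r_F(q)-1\bigr),\qquad\text{hence}\qquad \sum_{q\in E}\bigl(r_F(q)-1\bigr)=2F\cdot E-2.
\end{equation*}
Isolating the contribution of the chosen point $p$ yields the central equality $\sum_{q\in E\setminus\{p\}}(r_F(q)-1)=2F\cdot E-1-r_F(p)$. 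The displayed left-hand inequality is then immediate: every summand is non-negative and each point with $r_F(q)\geq 2$ contributes at least $1$.

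For the ``in particular'' clause I would show that every singular point of $E$ is ramified for $\phi$. At a point $q$ of multiplicity $m\geq 2$ the parametrization $n$ has vanishing differential (locally $t\mapsto(t^{m},\dots)$), so $d(\pi\circ n)$ also vanishes at $q$ and therefore $r_F(q)\geq 2$. Hence $\Sing E\setminus\{p\}$ is contained in $\{q\in E\setminus\{p\}:r_F(q)\geq 2\}$, and combining this with the inequality already established gives $\#\Sing E\leq 2F\cdot E-r_F(p)$; the estimate holds whether or not $p$ is itself singular, the case $p\in\Sing E$ using exactly the single unit separating $2F\cdot E-1-r_F(p)$ from $2F\cdot E-r_F(p)$.

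The only genuinely delicate point is the bookkeeping around the definition of $r_F(q)$: one must confirm that ``ramification index of the restriction to $E$'' is measured on the normalization and agrees with the local intersection multiplicity $(F\cdot E)_q$ when $F$ is reduced at $q$, and one should note that non-reducedness of $F$ along $q$ can only increase these indices, so it does not affect the inequalities. Everything else is the formal Riemann--Hurwitz computation together with the elementary fact that a cusp forces ramification of any non-constant map through it.
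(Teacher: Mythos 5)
Your proof is correct, and the paper in fact states Lemma \ref{lem:Hurwitz} without proof, treating it as the standard consequence of Riemann--Hurwitz that you reconstruct: normalize $E$ to get $\phi\colon\P^1\to B\cong\P^1$ of degree $F\cdot E$, apply Riemann--Hurwitz, and observe that each unibranch singular point forces $dn=0$ and hence ramification. All the delicate points you flag (identifying $r_F(q)$ with the ramification index on the normalization, and the case distinction on whether $p$ is singular in the ``in particular'' clause) are handled correctly.
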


\begin{lem}(Producing elliptic fibrations).\label{lem:ell_fib}
Let $E$ be a smooth rational curve on a smooth birationally ruled projective surface $X$ and let $C$ be a $(-1)$-curve such that $E\cdot C=2$. Then $X$ is rational and the following hold. 
\begin{enumerate}[(a)]
\item If $h^0(2K+E)\neq 0$ and $E^2=-4$ then $|E+2C|$ induces an elliptic fibration of $X$.
\item If $E^2=-3$ then after blowing up once some point on $E\setminus C$ the linear system of the proper transform of $E+2C$ induces an elliptic fibration.
\end{enumerate}
\end{lem}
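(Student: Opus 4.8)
The plan is to realize $D:=E+2C$ (or, in case (b), its proper transform) as the fibre class of an elliptic fibration obtained after contracting $C$. First I would record the numerics. Since $E$ is a smooth rational curve and $C$ a $(-1)$-curve, adjunction gives $K\cdot E=-2-E^2$ and $K\cdot C=-1$, so in case (a), with $E^2=-4$, one computes $D^2=0$, $K\cdot D=0$, hence $p_a(D)=1$; moreover $D\cdot E=D\cdot C=0$. As any irreducible curve $\Gamma$ with $D\cdot\Gamma<0$ would have to be a component of $D$, and both components give intersection $0$, the divisor $D$ is \emph{nef}. In case (b), with $E^2=-3$, one gets $D^2=1$ instead; blowing up a point $p\in E\setminus C$ replaces $E$ by a $(-4)$-curve $\tilde E=\sigma^*E-L$ while leaving $C$ and $\tilde E\cdot C=2$ untouched, so the proper transform $\tilde E+2C$ of $D$ lands in the numerical situation of case (a).

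Before anything else I would prove rationality, which is unconditional. Suppose $X$ were not rational; being birationally ruled with $\kappa=-\8$ it carries a unique ($\text{Albanese}$) $\P^1$-fibration $f\colon X\to B$ with $g(B)=q(X)\geq 1$. As $E$ and $C$ are rational, $f$ contracts each to a point, so each lies in a fibre; since $E\cdot C=2>0$ they meet and hence lie in the \emph{same} fibre $F$. But a fibre of a $\P^1$-fibration on a smooth surface is a reduced simple normal crossing tree of smooth rational curves (successive blow-ups of a $\P^1$-bundle), so distinct components meet transversally in at most one point, forcing $E\cdot C\leq 1$ — a contradiction. Thus $X$ is rational, whence $q=p_g=0$, $\chi(\O_X)=1$ and $h^0(2K)=0$.

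Next, the fibration mechanism. Contract $C$ by $\pi\colon X\to X'$; then $E':=\pi(E)$ is an \emph{integral} curve with $E'^2=0$, $K_{X'}\cdot E'=0$ and $p_a(E')=1$, i.e.\ a rational curve with one node or cusp. The key point is that $|E'|$ defines an elliptic fibration exactly when $\O_{E'}(E')$ is trivial in $\Pic^0(E')$: granting this, Riemann--Roch ($\chi(E')=1$) together with the restriction sequence for $\O_{X'}(E')$ (using $h^1(\O_{X'})=0$ and $\O_{E'}(E')\cong\O_{E'}$) gives $h^0(E')=2$, and $E'^2=0$ forces the pencil to be base-point free, hence a morphism $X'\to\P^1$ whose general fibre is, by Bertini, a smooth member of $|E'|$, that is a smooth genus-one curve; pulling back by $\pi$ yields the elliptic fibration induced by $|E+2C|$. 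To get triviality I would use that $E'$ is Gorenstein, so adjunction gives $\omega_{E'}=(K_{X'}+E')|_{E'}\cong\O_{E'}$, whence $(2K_{X'}+E')|_{E'}\cong\O_{E'}(-E')$, a degree-zero bundle on the integral curve $E'$. In case (a) the hypothesis $h^0(2K_X+E)\neq 0$, via $2K_X+E=\pi^*(2K_{X'}+E')$, gives $h^0(2K_{X'}+E')\neq 0$; since $h^0(2K_{X'})=0$ the restriction sequence produces a nonzero section of $\O_{E'}(-E')$, and a degree-zero line bundle with a section on an integral curve is trivial, so $\O_{E'}(E')\cong\O_{E'}$. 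In case (b) I instead exploit the freedom in the choice of $p$: contracting $C$ on $X$ first gives $E_Z$ with $E_Z^2=1$, and blowing up a smooth point $q\in E_Z$ changes the normal bundle to $\O_{E_Z}(E_Z)\otimes\O_{E_Z}(-q)$. Because $E\cong\P^1\to E_Z$ is the normalization and $q\mapsto\O_{E_Z}(q)$ identifies the smooth locus of $E_Z$ with $\Pic^1(E_Z)$ (Abel--Jacobi for the generalized Jacobian, which is $\mathbb{G}_m$ or $\mathbb{G}_a$), there is a unique smooth point $q$ with $\O_{E_Z}(q)\cong\O_{E_Z}(E_Z)$; choosing $p$ over this $q$ makes the normal bundle trivial, and the mechanism above applies.

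I expect the technical heart to be twofold. The rationality step must rule out $E$ and $C$ sitting in a single fibre of an irrational ruling, which I would pin on the precise structure of fibres of a $\P^1$-fibration (reduced snc trees of smooth rational curves), so that $E\cdot C=2$ is impossible there. The second, more delicate point is the bookkeeping with the \emph{singular} integral curve $E'$: one must work with the dualizing sheaf and the generalized Jacobian rather than smooth-curve adjunction, and in case (b) carry out the Abel--Jacobi point-selection that forces $\O_{E'}(E')$ to be trivial. Once triviality of the normal bundle is in hand, base-point-freeness, connectedness and smoothness of the general member of the pencil are routine.
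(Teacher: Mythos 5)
Your proposal is correct, and in part (a) it takes a genuinely different route from the paper's. Both proofs start the same way: rationality follows because $E$ and $C$ cannot both lie in a fibre of a ruling over an irrational base (distinct components of a fibre of a $\P^1$-fibration meet at most once and transversally — though note that fibres themselves need not be reduced, only the reduced fibre is an snc tree), and both then contract $C$ and study the integral arithmetic-genus-one image $E'$ of $E$. The divergence is in how $h^0(E')\geq 2$ is obtained in (a). The paper first reduces to $K^2\geq 0$ by contracting $(-1)$-curves $\ell$ with $(2K+E)\cdot\ell<0$ (using $h^0(2K+E)\neq 0$ and $h^0(2K)=0$ to show such $\ell$ are disjoint from $E$), and then applies Riemann--Roch to $-K$ and $K+E$ together with the Kumar--Murthy decomposition $E=(K+E)+(-K)$. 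You instead use the hypothesis exactly once: since $2K_X+E$ is the pull-back of $2K_{X'}+E'$ and $h^0(2K_{X'})=0$, a section of $2K_{X'}+E'$ restricts to a nonzero section of $(2K_{X'}+E')|_{E'}\cong\O_{E'}(-E')$ (via $\omega_{E'}\cong\O_{E'}$), a degree-zero bundle on an integral curve; hence the normal bundle $\O_{E'}(E')$ is trivial and $h^0(E')=2$ falls out of the restriction sequence and $h^1(\O_{X'})=0$. This is shorter, avoids the auxiliary contractions, and isolates precisely where the hypothesis $h^0(2K+E)\neq 0$ enters. In (b) the two arguments are essentially equivalent: the paper blows up the intersection point of $E'$ with a second member of the pencil $|E'|$ (available by Riemann--Roch since $E'^2=1$), whereas you characterize the same point through the Abel--Jacobi identification of the smooth locus of $E'$ with $\Pic^1(E')$; either description of the point trivializes the normal bundle of the proper transform, after which the mechanism of (a) applies.
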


\begin{proof}Clearly, $X$ is not $\P^2$. Let $X\to B$ be a $\P^1$-fibration. Since all fibers are rational trees, $E$ or $C$ is horizontal, so $B$ is rational. It follows that $X$ is rational. To find the necessary elliptic fibrations for (a) and (b) we may replace $X$ with its image after the contraction of $C$. Now $E$ becomes a nodal rational curve, so $E\cdot (K+E)=0$.

(a) We have $K\cdot E=E^2=0$, so $(2K+E)^2=4K^2$. Suppose that $K^2<0$. Then the divisor $2K+E$, which is effective by assumption, is not nef. Since $E^2=0$, we have $X\ncong \P^2$, so there exists a curve $\ell$ such that $(2K+E)\cdot \ell<0$ and $\ell^2\leq 0$. We have $\ell\neq E$, so $K\cdot \ell<0$, hence $\ell\cong \P^1$. The curve $\ell$ is not a $0$-curve, because otherwise it is a fiber of a $\P^1$-fibration of $X$, for which $(2K+E)\cdot \ell\geq 0$, as $h^0(2K+E)\neq 0$. Thus $\ell^2<0$, hence $\ell$ is a $(-1)$-curve with $\ell \cdot E\leq 1$. But if $\ell\cdot E=1$ then after the contraction of $\ell$ the image of $E$ has arithmetic genus $1$ and intersects the canonical divisor negatively, hence is in the fixed part of the direct image of $2K+E$, which is impossible, as $h^0(2K)=0$. Thus $\ell\cdot E=0$. Contracting $\ell$ we reduce the proof inductively to the case $K^2\geq 0$. 
 
Now we argue as in the proof of \cite[Theorem 3.3, Claim]{KumarMurthy-rectifiability}. By Riemann-Roch we have $h^0(-K)\geq K^2+1\geq 1$ and $h^0(K+E)\geq p_a(E)=1$, where $p_a$ denotes the arithmetic genus. Write $$E=(K+E)+(-K).$$ Clearly, $E$ is not in the fixed part of $|K+E|$. It is also not in the fixed part of $|-K|$, because otherwise $h^0(-E)=h^0((2K+E)+2(-K-E))\geq 1$, which is impossible. Thus $E$ is linearly equivalent to an effective divisor whose support does not contain $E$, which gives $h^0(E)\geq 2$ and shows that the linear system $|E|$ has no fixed components. But $E^2=0$, so it has no base points either. Since $E$ is a connected reduced member of the system, a general member is smooth and reduced of genus $p_a(E)=1$, hence it is an elliptic curve.

(b) After the contraction of $C$ we have $E^2=1$. By Riemann-Roch $h^0(E)\geq E^2+1=2$. Let $U\in |E|$ be a smooth member. We have $U\cdot E=E^2=1$, so $U$ meets $E$ normally in a unique smooth point $p$. Let $E'$ be the proper transform of $E$ under the blow-up of $p$. Then $h^0(E')\geq 2$ and $|E'|$ has no fixed components. Since $(E')^2=0$, the linear system $|E'|$ induces an elliptic fibration of the new surface, as required.
\end{proof}

The following example shows that the assumption $h^0(2K+E)\neq 0$ in Lemma \ref{lem:ell_fib}(a) is necessary.
 
\begin{ex}\label{rem:Cayley-Bacharach} Let $\bar E$ be an irreducible planar cubic with a singular point $q$ and let $p_1,\ldots, p_9\in \bar E\setminus \{q\}$ be distinct points. Let $\theta\:X\to \P^2$ denote the blowup at $p_1,\ldots,p_9$ and $E$ the proper transform of $\bar E$. We have $E^2=0$ and $K+E\sim 0$, hence $h^0(2K+E)=h^0(K)=0$. Assume that $E$ coincides with the support of some fiber of an elliptic fibration of $X$. Then $F\sim nE$ for some positive integer $n$ and a general fiber $F$. The curve $\theta(F)$ meets $\bar E$ exactly in $p_1,\ldots,p_9$, at each point with multiplicity $n$. We have $\theta(F)\sim 3n \ell$, where $\ell$ is a line, so intersecting with $\bar E$ we obtain $n(p_1+\cdots+p_9)\sim 9n o$, where $o\in \bar E$ is any flex point of $\bar E \setminus \{q\}$. Thus $n(p_1+\ldots+p_9)=0$ in the group law of $(\bar E\setminus \{q\},o)$. The group is isomorphic to $(\C^*,1)$ if $q\in \bar E$ is a node and to $(\C^1,0)$ if $q\in \bar E$ is a cusp, so the latter equality is impossible for a general choice of points $p_1,\ldots, p_9$. This shows that in general $\Supp E$ is not the support of an elliptic fiber. To obtain a configuration as in Lemma \ref{lem:ell_fib}(a) we blow up once at the singular point of $E$.
\end{ex}

\medskip

Let $\bar E\subset \P^2$ be a rational cuspidal curve. Denote by $\pi_0\:(X_0,D_0)\to (\P^2,\bar E)$ the minimal weak resolution of singularities, that is, a composition of a minimal sequence of blow-ups such that the proper transform $E_0\subseteq X_0$ of $\bar E$ is smooth. Clearly, $\P^2\setminus \bar E=X_0\setminus D_0$. We denote the cusps of $\bar E$ by $q_1,\ldots, q_c$. Since $(\Pic \P^2)\otimes \Q$ is generated by $\bar E$, the components of $D_0$ freely generate $(\Pic X_0)\otimes \Q$. The divisor $D_0-E_0$ has a negative definite intersection matrix.

\smallskip
Let $Q_j\subseteq D_0$ be the reduced exceptional divisor over the cusp $q_j$, $j\in \{1,\ldots,c\}$. It is a rational tree with a negative definite intersection matrix and a specific dual graph whose all vertices have degree not bigger than $3$. The Eisenbud--Neumann diagram of  this graph (defined as the image of the graph after the contraction of vertices of degree $2$) is
\begin{figure}[h]

\centerline{
\xymatrix{ {\circ}\ar@{-}[r] &{\circ}\ar@{-}[r]\ar@{-}[d] &{\circ}\ar@{-}[r]\ar@{-}[d] &{\ldots}\ar@{-}[r] &{\circ}\ar@{-}[r]\ar@{-}[d] &{\circ} \\ {} &{\circ} &{\circ} &{\ldots} &{\circ} & {} }
}
\caption{The Eisenbud-Neumann diagram of $Q_j=\pi_0^{-1}(q_j)$.}\label{Fig:EN_Qj}

\end{figure}
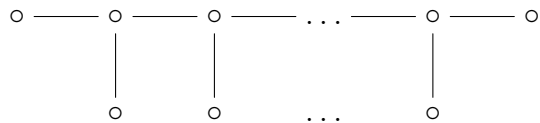

Since every $q_j\in \bar E$ is locally analytically irreducible, $Q_j$ can be seen as being produced by a \emph{connected sequence of blow-ups}, i.e.\ we can decompose the morphism contracting it to a point into a sequence of blow-ups $\sigma_1\circ\ldots\circ\sigma_s$, such that the center of $\sigma_{i+1}$ belongs to the exceptional curve of $\sigma_i$ for $i\geq 1$. Thus the components of $Q_j$ are linearly ordered as proper transforms of subsequent exceptional divisors of $\sigma_1, \sigma_2, \ldots, \sigma_s$. The last one, call it $C_j$, is the unique $(-1)$-curve in $Q_j$. It is contained in some twig of $Q_j$. Although $E_0$ is smooth, $\pi_0$ is not a log resolution, so there may be (at most one) component of $Q_j-C_j$ meeting $E_0$, call it $\tilde C_j$; put $\tilde C_j=0$ if there is no such. We have $\tilde C_j\cdot E_0=1$ if $\tilde C_j\neq 0$.

\begin{dfn} A cusp of a planar curve is \emph{semi-ordinary} if it is locally analytically isomorphic to the singular point of $x^2=y^{2m+1}$ at $(0,0)\in\Spec \C[x,y]$ for some $m\geq 1$.
\end{dfn}

Note that an \emph{ordinary cusp} (called also \emph{simple}) is a semi-ordinary cusp with $m=1$. The exceptional divisors of the minimal log resolution and of the minimal weak resolution of a semi-ordinary cusp are $[2,1,3,(2)_{m-1}]$ and $Q_j=[1,(2)_{m-1}]$, respectively. We have $C_j\cdot E_0=2$ and $\tilde C_j=0$ for semi-ordinary cusps.

\medskip
\section{Almost minimal models}

Let $\bar E\subseteq \P^2$ be a curve homeomorphic to the projective line, that is, a rational cuspidal curve. We will study the minimal log resolution $\pi\: (X,D)\to (\P^2,\bar E)$ using minimal model program techniques. For basic notions and theorems of the program we refer the reader to \cite{KollarKovacs-2DlogMMP} and \cite{Matsuki_MMP_intro,KollarMori-bir_geom}. In this article any final output of the birational part of a run of the MMP (which is a birational morphism) will be called a \emph{minimal model}, even if $\kappa=-\8$. Recall that the program applied to $(X,rD)$, where $r\in\Q\cap [0,1]$, produces a sequence of contractions of \emph{log exceptional curves}, that is, curves which on the respective image of $X$ have negative self-intersection numbers and intersect the direct image of $K_{X}+rD$ negatively. It is well-known that for $r=1$ minimal models of log smooth surfaces are singular in general and that arbitrary quotient singularities may appear. However, surfaces with quotient singularities are not understood well enough (even the log del Pezzo surfaces of rank one), which causes problems with applications of the MMP techniques to concrete problems concerning log surfaces or quasi-projective surfaces. The main point of the construction of an \emph{almost minimal model} is to avoid deeper log singularities introduced by the usual run of the log MMP, or rather to delay their introduction until all necessary contractions in the open part of the surface are done.  

\smallskip 
To prove Theorem \ref{thm:at_most_4_cusps} we may, and will, assume that the number of cusps of  $\bar E$ is at least $4$. By \cite{Wakabayashi-cusp} the surface $\P^2\setminus \bar E$ is of log general type. It is also $\Q$-acyclic, hence as a consequence of the logarithmic Bogomolov-Miyaoka-Yau inequality \cite{MiTs-lines_on_qhp} (see also \cite[\S 10]{Palka-recent_progress_Qhp}), it does not contain affine lines. It follows that $(X,rD)$ for $r=1$ is \emph{almost minimal} in the sense of \cite[\S 2.3.11]{Miyan-OpenSurf}; equivalently, the morphism onto a minimal model contracts only components of $D$. Unfortunately, the latter fact turned out to be of limited use when studying the surface $\P^2\setminus \bar E$ because of the complicated geometry of possible divisors $D$. We will therefore use a generalization of the theory of almost minimal models to the case of fractional boundaries proposed in \cite{Palka-minimal_models}. We work with $r=\frac{1}{2}$. This choice turns out to be optimal for many reasons. Now new curves in $X\setminus D$ will be contracted and we need to carefully control the whole process.

Instead of working with the minimal log resolution it is in fact more convenient to work with the minimal weak resolution $\pi_0\:(X_0,D_0)\to (\P^2,\bar E)$, as defined in the previous section. As an outcome of the construction of an almost minimal model of $(X_0,\frac{1}{2}D_0)$ given in \cite[\S 3]{Palka-minimal_models} we obtain a sequence of birational contractions between smooth projective surfaces $$\psi_{i+1}\:(X_{i},D_{i})\to (X_{i+1},D_{i+1}),\ \  0\leq i\leq n-1,$$ where, $D_{i+1}$ is a reduced divisor, a direct image of $D_{i}$ and $X_{i+1}\setminus D_{i+1}$ is an open affine subset of $X_{i}\setminus D_{i}$. Let us recall this inductive construction. Assume $(X_i,D_i)$ is defined already. Write $K_i$ for the canonical divisor on $X_i$. By a \emph{$(-2)$-twig} of $D_i$ we mean a twig of $D_i$ consisting of $(-2)$-curves. A \emph{maximal $(-2)$-twig} is a $(-2)$-twig which is maximal with respect to the inclusion of supports. Such twigs intersect trivially with the canonical divisor and hence their role in the construction is special. There are also some specific $(-1)$-curves in $D_i$ we need to take care of.

\begin{notation}\label{def:Delta,Upsilon,Dflat} Let $(X_i,D_i)$ be as above.  \begin{enumerate}[(a)]

\item Let $\Delta_i$ be the sum of all maximal $(-2)$-twigs of $D_i$. 

\item Let $\Upsilon_i$ be the sum of $(-1)$-curves $L$ in $D_i$, for which either $\beta_{D_i}(L)=3$ and $L\cdot \Delta_i=1$ or $\beta_{D_i}(L)=2$ and $L$ meets exactly one component of $D_i$.

\item Decompose $\Delta_i$ as $\Delta_i=\Delta^+_i+\Delta^-_i$, where $\Delta^+_i$ consists of those maximal $(-2)$-twigs of $D_i$ which meet $\Upsilon_i$.

\item Put $D_i^\flat=D_i-\Upsilon_i-\Delta^+_i-\Bk'\Delta^-_i$.
\end{enumerate}\end{notation}
 
\begin{dfn} Let $(X_i,D_i)$ be as above. A curve $A\subseteq X_i$ is \emph{almost log exceptional on $(X_i,\frac{1}{2}D_i)$} if $A$ is a $(-1)$-curve not contained in $D_i$ such that 
\begin{equation}\label{eq:ALE} A\cdot D_i=2,\ \  A\cdot \Delta_i^-=1,\ \  A\cdot(\Upsilon_i+\Delta_i^+)=0
\end{equation} and the component of $\Delta^-_i$ meeting $A$ is a tip of $\Delta^-_i$ (but not necessarily a tip of $D_i$).  
\end{dfn}

Note that $A\cap (X_i\setminus D_i)\cong \C^*$, hence $e_{top}(X_i\setminus (D_i+A))=e_{top}(X_i\setminus D_i)$. If there is no almost log exceptional curve on $(X_i,\frac{1}{2}D_i)$ then we put $n=i$ and we call $n$ the \emph{length} of the chosen process of almost minimalization and $(X_n,\frac{1}{2}D_n)$ an \emph{almost minimal model} of $(X_0,\frac{1}{2}D_0)$. Otherwise we choose $A_{i+1}=A$ as above and we define a birational morphism $$\psi_{i+1}\:X_i\to X_{i+1}$$ as a local snc-minimalization of $D_i+A_{i+1}$, that is, a composition of a maximal  (not necessarily unique) sequence of contractions of superfluous $(-1)$-curves contained in $D_i+A_{i+1}$ and its images starting from $A_{i+1}$ such that the total contracted divisor is connected, see Figure \ref{Fig:alm_log_exc}. By \cite[Lemma 3.4]{Palka-minimal_models} each component of $\Upsilon_i$ meets at most one component of $\Delta_i$. In particular, $\Upsilon_i+\Delta_i$ can be contracted by a birational morphism. The following lemma is a motivation for the above definition of an almost log exceptional curve. It shows that the above definition of an almost minimal model is analogous to the original definition in the theory of open surfaces known in case of a reduced simple normal crossing boundary divisor (see \cite[2.3.11, 2.4.3]{Miyan-OpenSurf}). 

 \begin{figure}[h]\centering
\includegraphics[scale=0.21]{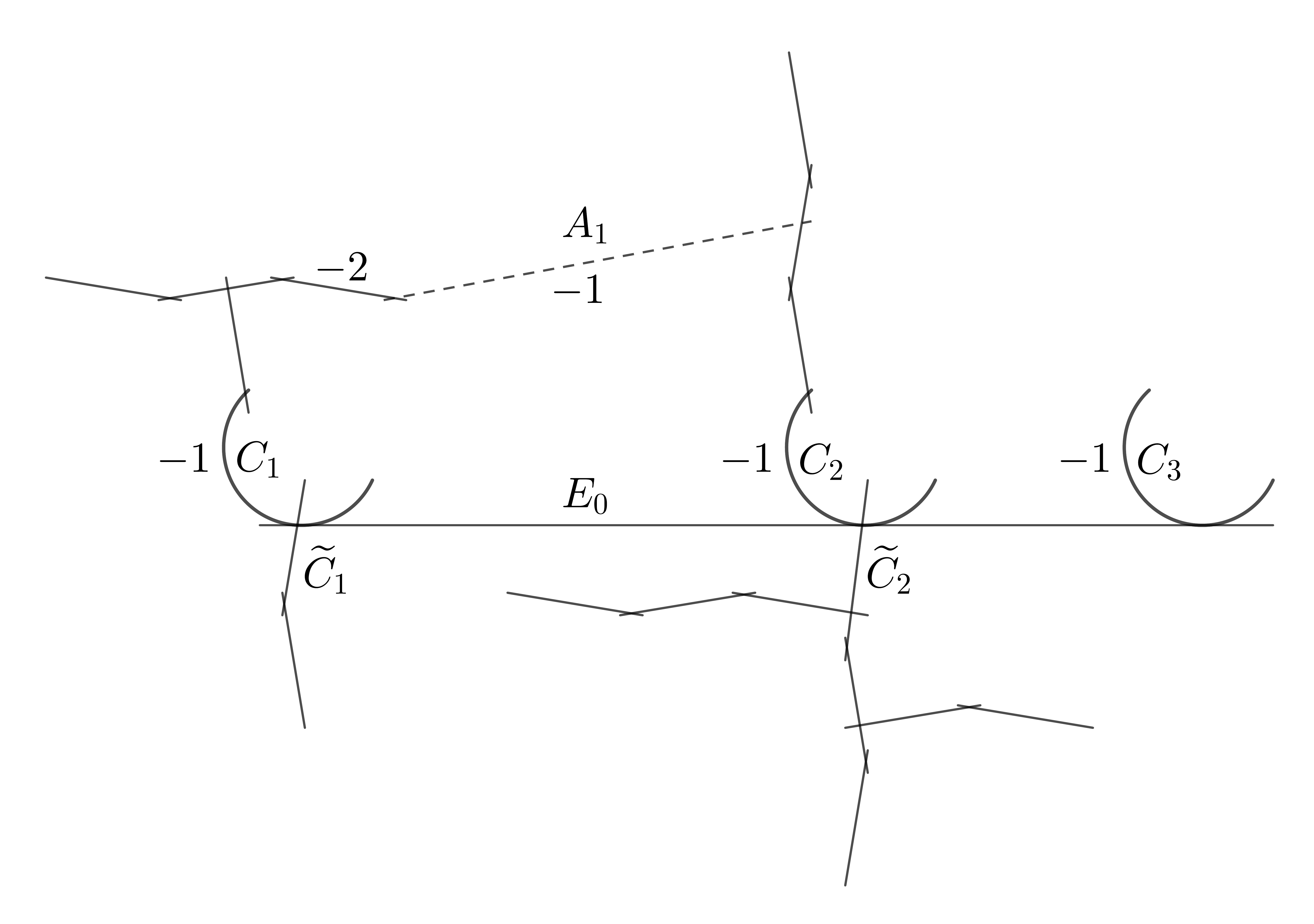}\caption{An example of $(X_0,D_0+A_1)$ for $c=3$. Here $\Exc \psi_1=[1,2]$.}\label{Fig:alm_log_exc}
 \end{figure}

\begin{lem}\label{lem:producing_A} Let $(X_i,D_i)$ be as above and let $\alpha_i\: (X_i,D_i)\to (Y_i,D_{Y_i})$ be the contraction of $\Upsilon_i+\Delta_i$. Then 
\begin{enumerate}[(a)]
    \item $\alpha_i^*(K_{Y_i}+\frac{1}{2}D_{Y_i})=K_i+\frac{1}{2}D_i^\flat$.
    \item $A$ is almost log exceptional on $(X_i,D_i)$ if and only if $(\alpha_i)_*A$ is log exceptional on $(Y_i,D_{Y_i})$.
    \item $\kappa(K_i+\frac{1}{2}D_i)=\kappa(K_X+\frac{1}{2}D)$.
    \end{enumerate} \end{lem}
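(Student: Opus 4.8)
The plan is to prove the three parts in order; part (a) carries the real content and parts (b) and (c) follow from it via the projection formula and standard invariance properties of the log Kodaira dimension.

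\textbf{Part (a).} Since $\alpha_i$ is birational and $Y_i$ is normal, the pullback $\alpha_i^*(K_{Y_i}+\tfrac12 D_{Y_i})$ is the unique $\Q$-divisor $\Xi$ on $X_i$ with $(\alpha_i)_*\Xi = K_{Y_i}+\tfrac12 D_{Y_i}$ and $\Xi\cdot E = 0$ for every $\alpha_i$-exceptional curve $E$, that is, for every component of $\Upsilon_i+\Delta_i$. I would set $\Xi := K_i + \tfrac12 D_i^\flat$ and verify both properties. The pushforward condition is immediate: $\Upsilon_i$, $\Delta_i^+$ and $\Supp \Bk'\Delta_i^-$ are all contracted, so $(\alpha_i)_* D_i^\flat = (\alpha_i)_* D_i = D_{Y_i}$ while $(\alpha_i)_* K_i = K_{Y_i}$. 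The condition $\Xi\cdot E = 0$ is then checked component by component. For a $(-2)$-curve $E$ in $\Delta_i^+$ or $\Delta_i^-$ one has $K_i\cdot E = 0$, so it suffices to show $D_i^\flat\cdot E = 0$; for a component of $\Delta_i^-$ this is precisely the bark identity $T_j\cdot\Bk' T = -\delta_{j1}$ applied twig by twig, while for a component of $\Delta_i^+$ both $\Delta_i^+$ and the adjacent $\Upsilon_i$-curve are removed in $D_i^\flat$, so $E$ meets $\Supp D_i^\flat$ trivially. For a $(-1)$-curve $L\subseteq\Upsilon_i$ the decisive point is $K_i\cdot L = -1$; using that distinct $\Upsilon_i$-curves are disjoint, one gets $D_i^\flat\cdot L = D_i\cdot L - \Upsilon_i\cdot L - \Delta_i^+\cdot L = 2-(-1)-1 = 2$ when $\beta_{D_i}(L)=3$ (and $1-(-1)-0=2$ in the tangential case $\beta_{D_i}(L)=2$), whence $\Xi\cdot L = -1 + \tfrac12\cdot 2 = 0$.

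\textbf{Part (b).} Here I would translate log-exceptionality on $Y_i$ into a numerical condition on $X_i$ through (a) and the projection formula:
\[
(K_{Y_i}+\tfrac12 D_{Y_i})\cdot(\alpha_i)_*A = \alpha_i^*(K_{Y_i}+\tfrac12 D_{Y_i})\cdot A = (K_i+\tfrac12 D_i^\flat)\cdot A.
\]
If $A$ is almost log exceptional, meeting a tip $T_1$ of a length-$k$ twig of $\Delta_i^-$ once and disjoint from $\Upsilon_i+\Delta_i^+$, then $\Bk'\Delta_i^-\cdot A = \tfrac{k}{k+1}$, so $(K_i+\tfrac12 D_i^\flat)\cdot A = -1 + \tfrac12\bigl(2-\tfrac{k}{k+1}\bigr) = -\tfrac{k}{2(k+1)} < 0$. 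The image self-intersection is found by correcting $A$ with the bark, $((\alpha_i)_*A)^2 = A^2 + A\cdot\Bk'\Delta_i^- = -1 + \tfrac{k}{k+1} = -\tfrac{1}{k+1} < 0$, so $(\alpha_i)_*A$ is indeed log exceptional. For the converse I would run this computation backwards: any log exceptional curve on $(Y_i,D_{Y_i})$ is the image of a $(-1)$-curve $A\not\subseteq D_i$ whose intersections with $\Upsilon_i+\Delta_i$ are confined to $\Delta_i^-$ and concentrated at a tip, using that $\Upsilon_i+\Delta_i$ contracts to log terminal (quotient) singularities and that both $A\cdot D_i^\flat$ and $((\alpha_i)_*A)^2$ depend on $A$ only through its intersections with $\Delta_i^-$.

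\textbf{Part (c).} For the invariance of $\kappa(K_i+\tfrac12 D_i)$ I would first note $\kappa(K_i+\tfrac12 D_i) = \kappa(K_i+\tfrac12 D_i^\flat)$: the difference $D_i - D_i^\flat = \Upsilon_i+\Delta_i^+ +\Bk'\Delta_i^-$ is an effective $\Q$-divisor supported on the negative-definite contractible divisor $\Upsilon_i+\Delta_i$, so peeling it off leaves the log Kodaira dimension unchanged. By (a) and birational invariance of $\kappa$ under pullback, $\kappa(K_i+\tfrac12 D_i^\flat) = \kappa(K_{Y_i}+\tfrac12 D_{Y_i})$, and the same identity applied on $X_{i+1}$ shows that $\kappa(K_i+\tfrac12 D_i)$ does not depend on $i$. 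It then remains to identify $\kappa(K_0+\tfrac12 D_0)$ with $\kappa(K_X+\tfrac12 D)$, which I would obtain by comparing the minimal weak and minimal log resolutions over each cusp and checking that the extra blow-ups of the log resolution alter the boundary only along divisors whose bark contributions cancel.

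The main obstacle I expect is twofold. In (a) it is making the case analysis exhaustive over all admissible shapes of $\Upsilon_i$-curves and their adjacent $(-2)$-twigs, so that $\Xi\cdot E = 0$ genuinely holds for every exceptional component, including the branching $(-1)$-curves where $K_i\cdot L = -1$ rather than $0$. In (b) it is the converse implication: one must control exactly which curves can become log exceptional once the quotient singularities are introduced, and it is here that the precise tip condition on $\Delta_i^-$ in the definition of almost log exceptional is forced.
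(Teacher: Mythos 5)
The paper does not prove this lemma at all: it simply cites \cite{Palka-minimal_models} (Lemma 3.4, Lemma 4.1(viii), Corollary 3.5), so any self-contained argument is necessarily a different route. Your part (a) is essentially complete and correct: the characterization of the pullback by pushforward plus orthogonality to the exceptional locus, and the component-by-component check (using that distinct $\Upsilon_i$-curves are disjoint and that each meets at most one component of $\Delta_i$ — facts the paper itself imports from the same reference) do give $\alpha_i^*(K_{Y_i}+\tfrac12 D_{Y_i})=K_i+\tfrac12 D_i^\flat$. The forward implication of (b) is also correct, though you should note that a tip of $\Delta_i^-$ may be either end of a $(-2)$-twig, so the bark coefficient is $\tfrac{k}{k+1}$ or $\tfrac{1}{k+1}$; both give a log exceptional image.

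The genuine gaps are in the converse of (b) and in (c). For the converse, ``running the computation backwards'' cannot work on numerical grounds alone: a $(-1)$-curve $A\not\subseteq D_i$ with $A\cdot D_i=1$ meeting a tip of a twig of $\Delta_i^-$ has $(K_i+\tfrac12 D_i^\flat)\cdot A\le -\tfrac12$ and $((\alpha_i)_*A)^2=-1+\tfrac{k}{k+1}<0$, so its image \emph{is} log exceptional while $A$ fails the condition $A\cdot D_i=2$. Such curves are excluded only by the geometric input that $X_i\setminus D_i\cong\P^2\setminus\bar E$ contains no affine lines (the BMY-type argument quoted in Section 3), and the remaining exclusions ($A$ meeting $\Upsilon_i+\Delta_i^+$, meeting $\Delta_i^-$ twice, or in a non-tip) each require a separate discrepancy computation for the corresponding contractible configuration. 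This unstated input is exactly where the hypothesis on $\bar E$ enters, and omitting it leaves the ``if'' direction unproved. In (c), the chain $\kappa(K_i+\tfrac12 D_i)=\kappa(K_i+\tfrac12 D_i^\flat)=\kappa(K_{Y_i}+\tfrac12 D_{Y_i})$ is fine, but the independence of $i$ does not follow from ``the same identity applied on $X_{i+1}$'': passing from $D_i$ to $D_{i+1}$ inserts $A_{i+1}$ into the boundary before contracting, and one must show that adding $\tfrac12 A_{i+1}$ does not raise $\kappa$. This uses $(K_i+\tfrac12 D_i^\flat)\cdot A_{i+1}<0$ together with a Zariski--Fujita type argument and is precisely the content of the cited Corollary 3.5; it is not a formal consequence of part (a).
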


\begin{proof}See \cite{Palka-minimal_models}: Lemma 3.4, Lemma 4.1(viii) and Corollary 3.5.\end{proof}

\begin{cor}\label{cor:almost_min_models} Let $(X_n,\frac{1}{2}D_n)$ be an almost minimal model of $(X_0,\frac{1}{2}D_0)$ as defined above. One of the following holds: \begin{enumerate}[(a)]
\item $X_n\setminus D_n$ (and hence $\P^2\setminus \bar E$) has a $\C^{**}$-fibration with no base points on $X_n$, where $\C^{**}=\C^1\setminus \{0,1\}$,
\item $\kappa(K_n+\frac{1}{2}D_n)\geq 0$ and $K_n+\frac{1}{2}D_n^\flat$ is the (numerically effective) positive part of the Zariski-Fujita decomposition of $K_n+\frac{1}{2}D_n$,
\item $-(K_n+\frac{1}{2}D_n^\flat)$ is ample off $\Upsilon_n+\Delta_n$ and trivial on $\Upsilon_n+\Delta_n$; and $\rho(X_n)=1+\#(\Upsilon_n+\Delta_n)$.
\end{enumerate} 
\end{cor}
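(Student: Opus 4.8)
The plan is to descend to the contracted surface, run the structure theory of the log MMP there, and pull the conclusions back along $\alpha_n$. Let $\alpha_n\:(X_n,D_n)\to(Y_n,D_{Y_n})$ be the contraction of $\Upsilon_n+\Delta_n$, which exists by the remark preceding Lemma \ref{lem:producing_A}, and recall from Lemma \ref{lem:producing_A}(a) that $K_n+\tfrac12 D_n^\flat=\alpha_n^*(K_{Y_n}+\tfrac12 D_{Y_n})$. Since $(X_n,\tfrac12 D_n)$ is almost minimal, Lemma \ref{lem:producing_A}(b) shows that $(Y_n,\tfrac12 D_{Y_n})$ carries no log exceptional curve. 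I would then apply the cone and contraction theorems to this log terminal pair (the singularities of $Y_n$ are quotient): either $K_{Y_n}+\tfrac12 D_{Y_n}$ is nef, or it is negative on an extremal ray. A birational extremal contraction would contract a curve of negative self-intersection on which $K_{Y_n}+\tfrac12 D_{Y_n}$ is negative, that is, a log exceptional curve; this is excluded. Hence the remaining possibilities are that $K_{Y_n}+\tfrac12 D_{Y_n}$ is nef, or there is a fiber type contraction, either to a point, forcing $\rho(Y_n)=1$, or to a curve, giving a $\P^1$-fibration. These three alternatives will yield (b), (c) and (a) respectively.

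In the nef case I would read off (b). Set $P=K_n+\tfrac12 D_n^\flat=\alpha_n^*(K_{Y_n}+\tfrac12 D_{Y_n})$ and $N=(K_n+\tfrac12 D_n)-P=\tfrac12(\Upsilon_n+\Delta_n^++\Bk'\Delta_n^-)$. Then $P$ is nef as the pullback of a nef divisor, $N\geq 0$ is supported on $\Exc\alpha_n=\Upsilon_n+\Delta_n$, which is negative definite, and $P\cdot N_i=0$ for every component $N_i$ of $N$ by the projection formula, since $N_i$ is $\alpha_n$-exceptional. By uniqueness of the Zariski--Fujita decomposition $P$ is the positive part, and abundance for $P$ together with $N\geq 0$ gives $\kappa(K_n+\tfrac12 D_n)\geq 0$; this is (b). In the contraction-to-a-point case I would read off (c): here $-(K_{Y_n}+\tfrac12 D_{Y_n})$ is ample, so its pullback $-(K_n+\tfrac12 D_n^\flat)$ is ample off $\Exc\alpha_n=\Upsilon_n+\Delta_n$ and trivial on it; and as $\alpha_n$ contracts exactly the components of $\Upsilon_n+\Delta_n$, which are independent in the N\'eron--Severi group, we get $\rho(X_n)=\rho(Y_n)+\#(\Upsilon_n+\Delta_n)=1+\#(\Upsilon_n+\Delta_n)$.

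The remaining and hardest case is a $\P^1$-fibration $p\:Y_n\to C$; here I must identify the induced open fibration as a $\C^{**}$-fibration. The composite $q=p\circ\alpha_n\:X_n\to C$ is a morphism, so the fibration has no base points on $X_n$. A general fiber $F$ of $p$ satisfies $(K_{Y_n}+\tfrac12 D_{Y_n})\cdot F<0$ and $K_{Y_n}\cdot F=-2$, whence $D_{Y_n}\cdot F\leq 3$; its proper transform $\tilde F\cong\P^1$ on $X_n$ is disjoint from the vertical curves $\Upsilon_n+\Delta_n$, so $\tilde F\cdot D_n=F\cdot D_{Y_n}\leq 3$. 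For the lower bound I would use that $X_n\setminus D_n$ is an affine open subset of $\P^2\setminus\bar E$: being affine it contains no complete curve, so $\tilde F\cdot D_n\geq 1$; it contains no affine line, so $\tilde F\cap(X_n\setminus D_n)\not\cong\C^1$ and $\tilde F\cdot D_n\geq 2$; and since $\overline\kappa(X_n\setminus D_n)\geq\overline\kappa(\P^2\setminus\bar E)=2$ it is of log general type and admits no $\C^*$-fibration, ruling out $\tilde F\cdot D_n=2$. Therefore $\tilde F\cdot D_n=3$, the general fiber of $q$ meets $X_n\setminus D_n$ in $\P^1$ minus three points, i.e. in $\C^{**}$, and the same rational map exhibits $\P^2\setminus\bar E$ as $\C^{**}$-fibred; this is (a).

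I expect the last case to be the main obstacle. Pinning the fiber type alternative down to exactly $\C^{**}$, rather than $\C^1$ or $\C^*$, rests on combining the Mori-theoretic bound $D_{Y_n}\cdot F\leq 3$ with the absence of affine lines and the log general type of the open surface, and one must verify that these properties are inherited by the affine open subset $X_n\setminus D_n\subseteq\P^2\setminus\bar E$. The supporting use of the singular log MMP — the cone and contraction theorems on the quotient-singular surface $Y_n$, together with log abundance used in the nef case — is standard but should be cited precisely.
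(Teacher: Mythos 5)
Your overall strategy — contract $\Upsilon_n+\Delta_n$ via $\alpha_n$, run the cone/contraction theorem for the klt pair $(Y_n,\frac{1}{2}D_{Y_n})$, and translate the trichotomy \{nef\,/\,fiber type over a curve\,/\,fiber type over a point\} back through the pullback formula of Lemma \ref{lem:producing_A}(a) — is the right one, and it is essentially the route taken in the cited source (the paper itself only refers to \cite{Palka-minimal_models}, proof of Theorem 4.5(4),(6), so there is no in-text proof to compare against line by line). Your treatment of the nef case (Zariski--Fujita via $P\cdot N_i=0$ and negative definiteness of $\Supp N$, plus two-dimensional log abundance for $\kappa\geq 0$), of the rank-one case, and of the fibration case (the bound $D_{Y_n}\cdot F\leq 3$ combined with affineness, absence of affine lines, and log general type to force $\C^{**}$ fibers) is correct.

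There is, however, one genuine gap, and it sits exactly where the real work of the cited theorem lies: your claim that ``$(Y_n,\tfrac12 D_{Y_n})$ carries no log exceptional curve'' does not follow from almost minimality together with Lemma \ref{lem:producing_A}(b). An almost log exceptional curve is by definition \emph{not contained in} $D_n$, and Lemma \ref{lem:producing_A}(b) only matches such curves with log exceptional curves on $Y_n$ that are images of curves outside the boundary. A $(K_{Y_n}+\frac{1}{2}D_{Y_n})$-negative birational extremal contraction could a priori contract a curve $\ell\subseteq D_{Y_n}$, i.e.\ a component $U\subseteq D_n$ not contracted by $\alpha_n$ with $(\alpha_{n*}U)^2<0$ and $(K_n+\frac{1}{2}D_n^\flat)\cdot U<0$; nothing in the definition of almost minimality forbids this. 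Note that the corollary's conclusion (b) asserts precisely that $K_n+\frac{1}{2}D_n^\flat$ is nef, hence non-negative on \emph{every} component of $D_n$, so this point cannot be sidestepped by contracting such an $\ell$ and continuing the MMP on a further model. Filling the gap requires verifying $(K_n+\frac{1}{2}D_n^\flat)\cdot U\geq 0$ for all components $U$ of $D_n$ outside $\Upsilon_n+\Delta_n$ (on $\Upsilon_n+\Delta_n$ the intersection is $0$ by construction of the bark), which uses the specific structure of $D_n$ as the pushforward of a cuspidal-resolution boundary — e.g.\ that the dangerous $(-1)$-curves $\psi(C_j)$ with low branching number land in $\Upsilon_n$, and that $(-2)$-curves with $\beta=2$ adjacent to $(-2)$-twigs cannot occur by maximality of those twigs. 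This verification is the substantive content of \cite[Theorem 4.5]{Palka-minimal_models} that your proposal omits.
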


\begin{proof}See \cite{Palka-minimal_models}, proof of Theorem 4.5(4),(6).
\end{proof}

Note that in case (c) the surface $(Y_n,\frac{1}{2}D_{Y_n})$ is a log del Pezzo and $Y_n$ has at most canonical singularities; their number is equal to $b_0(\Delta_n^-)$.

\begin{ex} Let $(X_0,D_0)\to (\P^2,\bar E)$ be the minimal weak resolution of the four-cuspidal quintic from the statement of Theorem \ref{thm:at_most_4_cusps}. All cusps of $\bar E$ are semi-ordinary, so $D_0-E_0=\Delta_0^++\Upsilon_0$, with $\Delta_0^+=[2,2]$, and the morphism $\alpha_0$ is simply the minimal weak resolution, i.e.\ $\alpha_0=\pi_0$, hence $Y_0=\P^2$. In particular, $n=0$. Clearly, $\rho(Y_0)=1$ and $2K_{Y_0}+\bar E\sim -\ell$, where $\ell$ is a line on $\P^2$, so $-(K_{Y_0}+\frac{1}{2}\bar E)$ is ample. \end{ex}

We recall basic properties of the process of almost minimalization defined above.  Given two effective $\Q$-divisors $T_1$, $T_2$ we denote by $T_1\wedge T_2$ the unique effective $\Q$-divisor such that for $j=1,2$ the divisors $T_j-T_1\wedge T_2$ are effective and have no common component.
\begin{lem}\label{lem:psi-properties}\
\begin{enumerate}[(a)]
\item $\Exc \psi_{i+1}$ is a chain equal to $V+A_{i+1}+W$, where $V$ and $W$ are zero or twigs of $D_i$.
\item The support of $D_0-\Exc \psi\wedge D_0$ is connected and simply connected.
\item If $U\subseteq D_i$ is a component not contracted by $\psi_{i+1}$ then $\beta_{D_{i+1}}(\psi_{i+1}(U))=\beta_{D_i+A_{i+1}}(U)$.
\item $\Exc \psi_{i+1}$ is disjoint from $\Delta_i^++\Upsilon_i$. In particular, the process of almost minimalization does not touch exceptional divisors over semi-ordinary cusps.
\end{enumerate}
\end{lem}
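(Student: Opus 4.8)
The plan is to establish each of the four properties by analyzing the local structure of the morphism $\psi_{i+1}$, which by construction is a local snc-minimalization of $D_i+A_{i+1}$ starting from the almost log exceptional curve $A_{i+1}$. The fundamental observation is that $A_{i+1}$ is a $(-1)$-curve meeting $D_i$ in exactly two points (by \eqref{eq:ALE}, $A\cdot D_i=2$), so contracting it creates at most one new superfluous $(-1)$-curve on each of the two ``sides'' where $A_{i+1}$ touched $D_i$. The contraction process can therefore only propagate along the two twigs of $D_i$ emanating from the attachment points of $A_{i+1}$, and it must terminate once it reaches a component of branching number at least $3$ (which cannot become superfluous). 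This is the geometric content underlying all four parts.

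For part (a), I would argue that each contraction in the sequence peels off one $(-1)$-curve, and the requirement that the total contracted divisor be connected forces the exceptional locus to be a chain. Since $A_{i+1}$ meets $D_i$ in two points lying on two twigs (the component of $\Delta_i^-$ meeting $A_{i+1}$ is a tip of $\Delta_i^-$, and on the other side $A_{i+1}\cdot\Delta_i^-=1$ with $A\cdot(\Upsilon_i+\Delta_i^+)=0$), the chain $\Exc\psi_{i+1}$ splits naturally as $V+A_{i+1}+W$ where $V,W$ are (possibly empty) sub-twigs of $D_i$ on either side. For part (d), the disjointness from $\Delta_i^++\Upsilon_i$ follows directly from the defining conditions $A\cdot(\Upsilon_i+\Delta_i^+)=0$: since the process starts at $A_{i+1}$ and propagates only into components it meets, and $A_{i+1}$ avoids $\Upsilon_i+\Delta_i^+$ entirely, these divisors are never touched; the statement about semi-ordinary cusps then follows since their exceptional divisors contribute to $\Delta_i^++\Upsilon_i$ by the computation recorded after the definition of semi-ordinary cusp.

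For parts (b) and (c), I would track how the branching numbers and the connectivity of the remaining boundary behave. Part (c) is essentially a local computation at a non-contracted component $U$: the contraction of the chain $\Exc\psi_{i+1}$ can only affect the intersection of $U$ with curves in that chain, and since $\psi_{i+1}$ is an isomorphism near $U$ away from $\Exc\psi_{i+1}$, the branching number of $\psi_{i+1}(U)$ in $D_{i+1}$ equals the branching number of $U$ in $D_i+A_{i+1}$ (the curve $A_{i+1}$ is precisely what replaces the contracted chain when computing intersections). Part (b) requires showing that removing $\Exc\psi\wedge D_0$ from $D_0$ leaves a connected and simply connected support; here I would use that $D_0$ is a rational tree (each $Q_j$ is a rational tree and they are joined along $E_0$), that the contracted twigs $V,W$ are attached to the rest of $D_0$ along chains, and that contracting a superfluous $(-1)$-curve in an snc tree preserves the tree structure.

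The main obstacle I anticipate is part (b), specifically verifying simple-connectedness of the support after the cumulative effect of the entire sequence $\psi=\psi_n\circ\cdots\circ\psi_1$ rather than a single step. A single almost-minimalization step clearly preserves the tree structure, but one must ensure that the iterated process does not create a cycle in the dual graph of the remaining boundary; this amounts to checking that at each stage the two attachment points of $A_{i+1}$ land on components whose removal-neighborhoods do not reconnect into a loop. Since these are standard but delicate facts about snc-minimalization of rational trees established in \cite{Palka-minimal_models}, I expect the proof to reduce to citing the relevant lemmas there (particularly the structure of $\Exc\psi_{i+1}$ and Lemma 3.4) combined with the observation that $e_{top}(X_i\setminus(D_i+A_{i+1}))=e_{top}(X_i\setminus D_i)$ noted after the definition of almost log exceptional curves, which controls the topology of the open part throughout the process.
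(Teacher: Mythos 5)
Your treatment of (a), (b) and (c) follows essentially the paper's route: the components of $\Exc\psi_{i+1}-A_{i+1}$ are forced to have $\beta_{D_i+A_{i+1}}=2$, hence form twigs of $D_i$, and (b), (c) are read off from this together with the connectedness and simple connectedness of $D_0$. (One imprecision: the second point of $A_{i+1}\cap D_i$ need not lie on a twig at all --- it may lie on $E_i$ or on a non-tip component of $\Delta_i^-$ --- but since you allow $V,W=0$ this does not damage the argument.)

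Part (d), however, has a genuine gap. The condition $A_{i+1}\cdot(\Upsilon_i+\Delta_i^+)=0$ only prevents $A_{i+1}$ itself from meeting $\Upsilon_i+\Delta_i^+$; it does not prevent the contracted twig $V$ (or $W$) from being \emph{attached} at its inner end to a component $U\subseteq\Upsilon_i$, in which case $\Exc\psi_{i+1}$ would meet, hence not be disjoint from, $\Upsilon_i$ (and the adjacent $(-2)$-twig in $\Delta_i^+$ would be affected as well). Your phrase ``propagates only into components it meets'' rules out that $\Upsilon_i+\Delta_i^+$ is \emph{contracted}, but disjointness is the stronger assertion that it is not even \emph{touched}. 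This cannot be dismissed by inspecting $\Upsilon_0$ alone, because for $i\geq 1$ the divisor $\Upsilon_i$ may contain new $(-1)$-curves created by earlier steps $\psi_1,\ldots,\psi_i$. The paper closes exactly this loophole: taking $i$ minimal with $V\cdot U>0$ for some $U\subseteq\Upsilon_i$, it shows that such a $U$ must meet $D_i-U$ in three distinct points, two of them on distinct twigs, which is impossible for an (untouched) image of a component of $\Upsilon_0$; hence the third branch of $U$ was produced by some earlier $A_j$, and this forces $D_0-\Exc\psi\wedge D_0$ to be disconnected, contradicting part (b). Some argument of this kind --- an induction on $i$ combined with (a) and (b) --- is needed to complete your proof of (d).
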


\begin{proof}
(a) Since $D_i$ is connected, by the definition of $\psi_{i+1}$ and by \eqref{eq:ALE} all components of $\Exc \psi_{i+1}$ have $\beta_{D_i+A_{i+1}}=2$, hence the connected components of $\Exc \psi_{i+1}-A_{i+1}$ are zero or are twigs of $D_i$. Note that adding $A_{i+1}$ to $D_i$ creates a new loop in the support of the divisor. Now (b) and (c) follow from (a) and from the fact that $D_0$ is connected and simply connected.

(d) Suppose that $i\geq 0$ is the smallest natural number such that $\psi_{i+1}$ touches $\Delta_i^++\Upsilon_i$. By \eqref{eq:ALE} the curve $A_{i+1}$ is disjoint from $\Delta_i^++\Upsilon_i$, so we can write $\Exc \psi_{i+1}=V+A_{i+1}+W$ as in (a) with $V\cdot U>0$ for some component $U\subseteq \Upsilon_i$. By the definition of $\Upsilon_i$ it follows that $U$ meets $D_i-U$ in exactly three different points, two of which belong to distinct twigs of $D_i$. But then $U$ is not in the image of $\Upsilon_0$, so by (a) the third point is the image of some $A_j$ for $j\leq i$. But then $D_0-\Exc \psi\wedge D_0$ is not connected; a contradiction with (b).
\end{proof}

We now introduce more notation which helps to control the geometry of the divisors $D_i$. As above, $n$ denotes the length of the chosen process of almost minimalization. It equals the number of curves not contained in $D_0$ contracted by $\psi$.

\begin{notation}\label{not:tau_etc} Let $(X_i,D_i)$, $i\in \{0,\ldots, n\}$ and $j\in \{1,\ldots, c\}$ be as above.
\begin{enumerate}[(a)]
\item Denote by $\Upsilon_i^0$ the subdivisor of $\Upsilon_i$ consisting of components which do not meet $\Delta_i^+$. By definition, every component of $\Upsilon_i^0$ is a $(-1)$-curve meeting exactly one component of $D_i$.
\item Denote the numbers of semi-ordinary and ordinary cusps of $\bar E$ by $c_0$ and $c_0'$, respectively.
\item Put $\tau_j=C_j\cdot E_0$ and $\tau=\tau_1+\cdots+\tau_c$ (see the notation in Section \ref{sec:prelim}).
\item Put $s_j=1$ if $\tilde C_j=0$ and $s_j=0$ otherwise. Put $s=s_1+\cdots+s_c$.
\item Put $\tau_j^*=\tau_j-s_j-1$ and $\tau^*=\tau_1^*+\cdots+\tau_c^*=\tau-s-c$. Note that $\tau_j^*\geq 0$. 
\item Put $$p_2=p_2(\P^2,\bar E):=h^0(2K_X+D)=h^0(2K_0+D_0).$$
\item Put $\delta=\delta(\Delta_n^-)$.
\end{enumerate}
\end{notation}

Although $Q_j=\pi_0^{-1}(q_j)$ has simple normal crossings, the divisor $D_0$, and in fact every divisor $D_i$ for $i=0,\ldots, n$, has a unique non-snc point over each $q_j$, which belongs to $E_i$. The exceptional divisor of the minimal log resolution \begin{equation}\label{eq:log_res}
\varphi_i\:(X_i',D_i')\to (X_i,D_i)
\end{equation} 
over $q_j$ is $[1,(2)_{\tau_j-1}]$. Clearly, $(X,D)=(X_0',D_0')$ and $\pi=\pi_0\circ\varphi_0$. If $\P^2\setminus \bar E$ has no $\C^{**}$-fibration (cf.\ Proposition \ref{lem:basics}(c)) then by \cite[Lemmas 3.7 and 5.1(iv)]{Palka-minimal_models}, $D_i'$ is snc-minimal. As discussed before, components of $Q_j$, and similarly of $Q_j'=\pi^{-1}(q_j)$, are naturally linearly ordered as proper transforms of blow-ups constituting $\pi_0^{-1}$, respectively $\pi^{-1}$. Consequently, the sets of maximal twigs of $D_0$ and of $D$ lying over each $q_j$ come with a natural linear order. Let $\Delta_i'$ denote the sum of maximal $(-2)$-twigs of $D_i'$.

\begin{lem}\label{lem:log BMY} \ 
\begin{enumerate}[(a)]
\item The contribution of the first two maximal admissible twigs of $Q_j'$ to $\ind(Q_j')$ is strictly bigger than $\frac{1}{2}$.
\item $\ind(D_i')+i\leq 5-p_2$ for every $0\leq i\leq n$.
\item $\frac{1}{2}b_0(\Delta_0')+\frac{1}{3}c_0'\leq \ind(D)\leq 5-p_2$. 
\end{enumerate}
\end{lem}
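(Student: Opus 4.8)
The three parts build on one another, so I would organize the argument around the logarithmic Bogomolov--Miyaoka--Yau inequality, which is the source of the bound $\ind(D_i')+i\leq 5-p_2$ and ultimately of all three statements. The guiding principle is that $\ind(\cdot)$, being a sum of inductances of maximal admissible twigs, can be estimated twig-by-twig using the recursion $\ind([a_1,\ldots,a_k])^{-1}=a_1-\ind([a_2,\ldots,a_k])$ from Lemma~\ref{lem:ind}, and that Lemma~\ref{lem:ind} guarantees monotonicity under extending a chain. For part~(a), I would use the explicit shape of $Q_j'$: since $\pi$ is a log resolution, over each cusp the last exceptional curve $[1]$ has been blown up into a $(-1)$-curve with two neighboring admissible twigs, one of which is the chain $[(2)_{\tau_j-1}]$ coming from the log-resolution tail $[1,(2)_{\tau_j-1}]$ described just before the lemma. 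The first two maximal admissible twigs of $Q_j'$ therefore always contain enough $(-2)$'s (or heavier curves) that each contributes inductance close to that of a pure $[(2)_k]$-chain; since $\ind([(2)_k])=\delta([(2)_k])\cdot d(\cdots)$ is bounded below and $\ind([2,\ldots])\geq\tfrac13$ already for a single $[2]$, summing the two twigs forces a total strictly exceeding $\tfrac12$. I would make this precise by the recursion, checking that the worst case is two $[2]$-tips, which already gives $\tfrac13+\tfrac13>\tfrac12$, wait---one must verify this lower bound carefully: the cleanest route is to observe that each of the two admissible twigs has inductance $\geq\tfrac13$ only if it is long enough, so the real content is that at least one twig is forced to be long, and I would extract this from the connected-blow-up structure of $Q_j$ together with Figure~\ref{Fig:EN_Qj}.

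For part~(b), the inequality $\ind(D_i')+i\leq 5-p_2$ is the heart of the matter and is where I expect the main obstacle to lie. This is a genuine logarithmic BMY-type estimate for the pair $(X_i',\tfrac12 D_i')$: the quantity $\ind(D_i')$ measures the ``boundary defect'' coming from the admissible twigs, while the additive $i$ records the $i$ curves outside $D_0$ already contracted by the almost-minimalization $\psi$, each of which consumes one unit of the available Euler-characteristic/canonical budget. The plan is to apply the logarithmic BMY inequality (in the form valid for $\Q$-acyclic or log-general-type open surfaces, cf.\ \cite{MiTs-lines_on_qhp} and the almost-minimal-model framework of \cite{Palka-minimal_models}) to $(X_n,\tfrac12 D_n)$ and then to track how $\ind$ and the contraction count transform under each $\psi_{i+1}$. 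By Lemma~\ref{lem:psi-properties}(a) each $\Exc\psi_{i+1}$ is a chain $V+A_{i+1}+W$ with $V,W$ twigs of $D_i$, so the change in $\ind$ under one step is controlled, and the $+i$ term precisely compensates the contracted $A$-curves; the subtle point is that $p_2=h^0(2K_0+D_0)=h^0(2K_X+D)$ is a birational invariant along $\psi$, so it can be carried through unchanged. I would invoke the Zariski--Fujita decomposition of $K_n+\tfrac12 D_n$ (Corollary~\ref{cor:almost_min_models}(b)) to relate $p_2$ to the positive part $K_n+\tfrac12 D_n^\flat$ and then apply BMY to bound its self-intersection, turning the geometric inequality into the arithmetic bound on $\ind+i$.

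For part~(c), the right-hand inequality $\ind(D)\leq 5-p_2$ is simply the case $i=0$ of part~(b) applied to $D=D_0'$. The left-hand inequality $\tfrac12 b_0(\Delta_0')+\tfrac13 c_0'\leq\ind(D)$ is then a lower bound on $\ind(D)$ obtained by summing twig contributions: each connected component of $\Delta_0'$ (a maximal $(-2)$-twig $[(2)_k]$) contributes inductance at least $\tfrac12$, since $\ind([(2)_k])=\tfrac{k}{k+1}\geq\tfrac12$ for $k\geq1$, wait---the factor is $b_0(\Delta_0')$ counting such components, so this gives the $\tfrac12 b_0(\Delta_0')$ term; and each ordinary cusp contributes its log-resolution twig $[2,1,3]$ whose admissible twigs include a $[3]$ (and a $[2]$) yielding an extra $\tfrac13$, accounting for the $\tfrac13 c_0'$ term. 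The plan here is to separate the maximal admissible twigs of $D$ into those lying in $\Delta_0'$ and those over ordinary cusps, bound each family from below by the recursion of Lemma~\ref{lem:ind}, and check that the families are disjoint so the bounds add. The main obstacle across the whole lemma is part~(b)'s BMY bookkeeping---ensuring the $+i$ term is exactly right and that no double-counting occurs between the twig contributions to $\ind$ and the contracted curves---so I would devote most of the care there and treat (a) and (c) as consequences of the twig-inductance estimates.
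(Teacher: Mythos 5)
The paper does not actually prove (a) and (b): it cites them from \cite[Lemma 3.5, Lemma 4.2(iv)]{Palka-Coolidge_Nagata1}, and the only argument given in loc.\ cit.\ of the present paper is the one-line proof of the first inequality in (c) (each maximal $(-2)$-twig has inductance $\geq\frac12$, and the $[3]$-tip of the divisor $[3,1,2]$ over an ordinary cusp adds $\frac13$). Your treatment of (c) reproduces exactly that argument and correctly identifies the second inequality as the $i=0$ case of (b), so that part is fine. The problem is that your reconstructions of (a) and (b) are where the real content lies, and both have genuine gaps.

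For (a), your arithmetic is off: a twig $[2]$ has inductance $\frac12$ and $[3]$ has $\frac13$, so ``two $[2]$-tips giving $\frac13+\frac13$'' is not a meaningful worst case, and in any event the dangerous configurations are twigs with a single tip of large weight $[k]$, whose inductance $\frac1k$ can be arbitrarily small. The correct statement one must prove is that if $\binom{c}{p}$ is the first HN-pair (so $c>p\geq 2$), the two twigs created by it have inductances $1-\frac{c\bmod p}{p}$ and $1-\frac{p}{c}$ by Lemma~\ref{lem:c,p}(b), and one needs $\frac{c\bmod p}{p}+\frac{p}{c}<\frac32$ (an elementary but nontrivial check, tight as $c/p\to 2$), after which Lemma~\ref{lem:ind} shows that subsequent HN-pairs only increase these inductances; you acknowledge the gap (``wait---one must verify this lower bound carefully'') but never close it. For (b), your plan applies the BMY inequality to $(X_n,\tfrac12 D_n)$ and ``tracks changes'', but the inequality concerns the \emph{reduced} boundary $D_i'$ on each log resolution $X_i'$; the standard derivation applies the reduced-boundary log BMY inequality to each almost minimal pair $(X_i',D_i')$ separately (using that $X_i'\setminus D_i'$ is $\Q$-acyclic without affine lines, so $e_{top}=1$) and extracts the term $-i$ from the formula $K_i\cdot(K_i+D_i)=p_2-c-\tau^*-i$ of Proposition~\ref{lem:basics}(a), which drops by one per contracted curve. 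Your assertion that each contraction ``consumes one unit of budget'' is exactly the step that needs an identity of this kind, and without it the constant $5-p_2$ is not derived.
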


\begin{proof}
For (a), (b) see \cite[Lemma 3.5, Lemma 4.2(iv)]{Palka-Coolidge_Nagata1}. For the first inequality in (c) note that the inductance of a $(-2)$-twig is at least $\frac{1}{2}$ and that the exceptional divisor of a log resolution over an ordinary cusp is $[3,1,2]$.
\end{proof}
 
Recall that the arithmetic genus of a divisor $T$ is defined as $p_a(T):=\frac{1}{2}T\cdot (K+T)+1$.
 
\begin{prop} \label{lem:basics} Let $p_2$ and $n$ be as above and let $i\in \{0,1,\ldots,n\}$.  
The following hold: \begin{enumerate}[(a)]
    \item $K_i\cdot (K_i+D_i)=p_2-c-\tau^*-i$. 
    \item $p_a(D_i)=i+\tau^*+c$ \ and\  $\#D_i=\rho(X_i)+i$.
    \item If $c\geq 4$ then $\P^2\setminus \bar E$ does not admit a $\C^{**}$-fibration.
    \item If $\P^2\setminus \bar E$ is not $\C^{**}$-fibered then 
   \begin{equation}\label{eq:basic_bound}
   \#D_n+\tau^*+c+\delta\leq 3p_2+8+(b_0(\Delta_n)+\#\Upsilon_n^0).
   \end{equation}
    \item $b_0(\Delta_n)+\#\Upsilon_n^0\leq b_0(\Delta_0)+c_0'$.
\end{enumerate}
\end{prop}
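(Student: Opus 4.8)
The plan is to prove (a) and (b) by induction on $i$, to prove (c) by extending a putative $\C^{**}$-fibration to a $\P^1$-fibration and counting, to derive (d) from the logarithmic Bogomolov--Miyaoka--Yau inequality for the almost minimal model, and to obtain (e) by a combinatorial monotonicity along the morphisms $\psi_i$.

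\emph{Parts (a) and (b).} For the base case $i=0$, the components of $D_0$ freely generate $(\Pic X_0)\otimes\Q$, so $\#D_0=\rho(X_0)$, which is the second identity of (b). Since every component of $D_0$ is a smooth rational curve, $p_a(D_0)=1-\#D_0+\sum_{C<C'}C\cdot C'$; evaluating the intersections inside each snc tree $Q_j$ (contributing $\#Q_j-1$) and between $E_0$ and $Q_j$ (using $C_j\cdot E_0=\tau_j$ and $\tilde C_j\cdot E_0=1-s_j$) collapses the sum to $p_a(D_0)=\tau-s=\tau^*+c$, the first identity of (b). For (a) at $i=0$ I would apply Riemann--Roch to $2K_0+D_0$ on the rational surface $X_0$, namely $\chi(2K_0+D_0)=1+\tfrac12(2K_0+D_0)(K_0+D_0)$, together with $h^2(2K_0+D_0)=h^0(-K_0-D_0)=0$ (its image on $\P^2$ is $(3-\deg\bar E)\ell$ with $\deg\bar E\ge4$) and the vanishing of $h^1(2K_0+D_0)$; substituting $D_0(K_0+D_0)=2(\tau^*+c-1)$ then yields $K_0(K_0+D_0)=p_2-c-\tau^*$. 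For the inductive step, $\psi_{i+1}$ first adds the $(-1)$-curve $A_{i+1}$ with $A_{i+1}\cdot D_i=2$ and then contracts superfluous $(-1)$-curves of $D_i+A_{i+1}$ and its images; by the proof of Lemma \ref{lem:psi-properties}(a) each contracted component has $\beta_{D_i+A_{i+1}}=2$. Replacing $D_i$ by $D_i+A_{i+1}$ adds $K_i\cdot A_{i+1}=-1$ to $K_i(K_i+D_i)$ and raises $p_a$ by $1$ (the new loop), whereas contracting a superfluous $(-1)$-curve of image-multiplicity $m=2$ leaves both $K\cdot(K+D)$ and $p_a(D)$ unchanged, the blow-down corrections being $m-2=0$ and $\tfrac12(m-1)(m-2)=0$. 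Hence $K_i(K_i+D_i)$ drops by $1$ and $p_a(D_i)$ rises by $1$ at each step; combined with $\#D_{i+1}-\rho(X_{i+1})=(\#D_i-\rho(X_i))+1$ this gives (a) and (b) for all $i$.

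\emph{Part (c).} Suppose $\P^2\setminus\bar E$ carries a $\C^{**}$-fibration and extend it to a $\P^1$-fibration of $X_0$. Then $E_0$ is horizontal and a general fiber meets the boundary at infinity in three points. Restricting the fibration to $E_0\cong\P^1$ and applying Lemma \ref{lem:Hurwitz} bounds $\#\Sing\bar E=c$ by the total ramification, while the identities $e_{top}(\P^2\setminus\bar E)=1$ and $e_{top}(\C^{**})=-1$ cap the number of degenerate fibers able to carry cusps; the ensuing case analysis forces $c\le3$.

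\emph{Part (d), the main obstacle.} Since $\P^2\setminus\bar E$ is not $\C^{**}$-fibered, Corollary \ref{cor:almost_min_models} places us in case (b) or (c), so $K_n+\tfrac12 D_n^\flat=\alpha_n^*(K_{Y_n}+\tfrac12 D_{Y_n})$ is the nef part of the Zariski--Fujita decomposition of $K_n+\tfrac12 D_n$ and, in case (c), $(Y_n,\tfrac12 D_{Y_n})$ is log del Pezzo. I would apply the generalized logarithmic BMY inequality $(K_n+\tfrac12 D_n^\flat)^2\le 3\,\bar e$ to this model, compute the left side via $(K_n+\tfrac12 D_n^\flat)^2=(K_n+\tfrac12 D_n)^2-N^2$ with $N=\tfrac12(\Upsilon_n+\Delta_n^++\Bk'\Delta_n^-)$, re-express $(K_n+\tfrac12 D_n)^2$ through $K_n(K_n+D_n)$ and $D_n(K_n+D_n)$ and hence through $p_2,c,\tau^*,n$ by (a)--(b), and evaluate $\bar e$ and the bark self-intersection using $(\Bk'T)^2=-\ind(T)$, the definition $\delta=\delta(\Delta_n^-)$, and Noether's formula $K_n^2=10-\rho(X_n)$ together with $\#D_n=\rho(X_n)+n$. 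The delicate point, where I expect most of the effort to lie, is the precise accounting of the fractional orbifold corrections of $\Delta_n$ and $\Upsilon_n$, so that exactly the combination $3p_2+8+(b_0(\Delta_n)+\#\Upsilon_n^0)$ appears on the right of \eqref{eq:basic_bound}.

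\emph{Part (e).} The only $(-1)$-curves of $D_0$ are the curves $C_j$, and such a $C_j$ meets just one component of $D_0$ precisely when $Q_j=C_j$, i.e.\ when $q_j$ is an ordinary cusp; hence $\#\Upsilon_0^0=c_0'$ and (e) reduces to the monotonicity $b_0(\Delta_i)+\#\Upsilon_i^0\le b_0(\Delta_{i-1})+\#\Upsilon_{i-1}^0$. By Lemma \ref{lem:psi-properties}(d) the morphism $\psi_i$ leaves $\Delta_{i-1}^++\Upsilon_{i-1}$ untouched, and since $A_i$ meets a tip of a maximal $(-2)$-twig of $\Delta_{i-1}^-$ with $\Exc\psi_i=V+A_i+W$ a chain, I would analyse the local effect of the contraction on the maximal $(-2)$-twigs and on the $(-1)$-curves meeting a single component, checking that it creates neither a new connected $(-2)$-twig nor a new such $(-1)$-curve. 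This per-step inequality yields (e).
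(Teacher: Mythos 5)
The central step, part (d), is where your proposal goes wrong. The logarithmic BMY inequality bounds $(K_n+\tfrac12 D_n^\flat)^2$ from \emph{above}; since $K_n^2=10-\rho(X_n)$ and $\#D_n=\rho(X_n)+n$, an upper bound on that square translates into a \emph{lower} bound on $\#D_n$ — the opposite of what \eqref{eq:basic_bound} asserts. The inequality actually needed is the \emph{lower} bound $(2K_n+D_n^\flat)^2\geq 0$, which comes from Corollary \ref{cor:almost_min_models}: when $\P^2\setminus\bar E$ is not $\C^{**}$-fibered, $2K_n+D_n^\flat$ is either nef or the pull-back of an anti-ample divisor, and in both cases its square is nonnegative. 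Expanding $(2K_n+D_n^\flat)^2=(2K_n+D_n)^2-(\Upsilon_n+\Delta_n^++\Bk'\Delta_n^-)\cdot(2K_n+D_n)$ and evaluating both terms via (a), (b) and Noether's formula yields $3p_2+8-c-\tau^*-\#D_n-\delta+b_0(\Delta_n)+\#\Upsilon_n^0\geq 0$, which is exactly \eqref{eq:basic_bound}. So your numerical bookkeeping is pointed at the right quantities, but it must be fed the positivity coming from the structure of the almost minimal model, not a BMY upper bound.

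There are two further gaps. In (a) the base case reduces, exactly as you set it up, to the vanishing $h^1(2K_0+D_0)=0$; given (b) and $h^2=0$ this vanishing is \emph{equivalent} to the identity being proved, so asserting it is circular — it is a genuine theorem (proved in \cite{Palka-minimal_models} via Serre duality and the structure of $D$), not a formality. In (e) the per-step claim that $\psi_i$ creates neither a new maximal $(-2)$-twig nor a new component of $\Upsilon^0$ is false: the later arguments of this paper (the divisor $\cL$ of $(-1)$-curves \emph{created} by $\psi$ in the proof of Theorem \ref{thm:almost_minimal}, and the discussion in Lemma \ref{lem:basic_bounds} of whether $\psi_2$ creates a component of $\Upsilon_2^0$) exist precisely because such curves can appear; the inequality holds only because each creation is offset by the loss of a $(-2)$-twig, and that balance is what must be verified. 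Your induction for (a)--(b), the identification $\#\Upsilon_0^0=c_0'$, and the general strategy for (c) are sound, though for (c) note that the naive Hurwitz count for a degree $\leq 3$ cover only gives $c\leq 4$, so the "ensuing case analysis" is doing real, unshown work. For comparison, the paper obtains (a)--(c) and the two inequalities behind (d)--(e) by citing \cite{Palka-minimal_models} and then only translates between the resolutions $D_n'$ and $D_n$.
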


\begin{proof}
For (a), (b) and (c) see \cite{Palka-minimal_models}: Lemma 4.3 and Lemma 5.1(iv).

(d), (e) By the proof of \cite[Lemma 4.5(6)]{Palka-minimal_models} we have $$b_0(\Delta_n')+\#\Upsilon_n^0\leq b_0(\Delta_0')+c_0'$$ and, in case $\P^2\setminus \bar E$ is not $\C^{**}$-fibered,
\begin{equation}
\rho(X_n')+n+\delta\leq 3p_2+8+(b_0(\Delta_n')+\#\Upsilon_n^0).
\end{equation}
Since $b_0(\Delta_i')=b_0(\Delta_i)+s$ (see the proof of Lemma 4.4 loc.cit.), the first inequality gives (e). Since $\rho(X_n')+n=\#D_n'=\#D_n+\tau=\#D_n+\tau^*+s+c$, the second inequality gives (d).
 \end{proof}

Note that the inequality \eqref{eq:basic_bound} is a consequence of the fact that if $\P^2\setminus \bar E$ is not $\C^{**}$-fibered (and is of log general type) then $2K_n+D_n^\flat$ is either nef or anti-ample, so $(2K_n+D_n^\flat)^2\geq 0$.

 \medskip
 \section{Geometric restrictions on exceptional divisors.} 

Let the notation be as before. In particular, $\bar E\subseteq \P^2$ is a rational cuspidal curve with $c$ cusps and $\pi_0\:(X_0,D_0)\to (\P^2,\bar E)$ is the minimal weak resolution of singularities.  In \cite[Theorem 1.4]{Palka-minimal_models} we proved that $c\leq 6$. We now show that $c=6$ is impossible. 

\begin{prop}\label{prop:c<=5} A rational cuspidal planar curve has at most $5$ cusps.
\end{prop}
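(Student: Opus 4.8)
The plan is to derive a contradiction from assuming $c=6$ by combining the numerical inequalities established in Proposition \ref{lem:basics} with the structural bound on inductances from Lemma \ref{lem:log BMY}. The starting point is the key inequality \eqref{eq:basic_bound}, which under the assumption $c\geq 4$ (so that $\P^2\setminus\bar E$ is not $\C^{**}$-fibered by Proposition \ref{lem:basics}(c)) reads
\begin{equation*}
\#D_n+\tau^*+c+\delta\leq 3p_2+8+(b_0(\Delta_n)+\#\Upsilon_n^0).
\end{equation*}
First I would feed in Proposition \ref{lem:basics}(e), namely $b_0(\Delta_n)+\#\Upsilon_n^0\leq b_0(\Delta_0)+c_0'$, to eliminate the $n$-dependent right-hand side in favor of invariants of the original resolution. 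Simultaneously I would use Lemma \ref{lem:log BMY}(c), which gives $\frac12 b_0(\Delta_0')+\frac13 c_0'\leq \ind(D)\leq 5-p_2$, to bound $b_0(\Delta_0)$ and $c_0'$ from above in terms of $p_2$. The goal is to reduce everything to an inequality purely in $c$, $p_2$, $\tau^*$ and a few genuinely nonnegative error terms.

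The main arithmetic engine should be Lemma \ref{lem:log BMY}(a): the first two maximal admissible twigs of each $Q_j'$ contribute strictly more than $\frac12$ to $\ind(Q_j')$, so summing over the $c$ cusps and using $\ind(D)=\sum_j \ind(Q_j')\leq 5-p_2$ from part (c) yields roughly $\frac{c}{2}<5-p_2$, i.e. $c<10-2p_2$. With $c=6$ this already forces $p_2\leq 1$, and in fact pins down $p_2$ and makes the twig contributions extremely tight. I would then exploit the remaining slack: each cusp that is not semi-ordinary, or that has $\tau_j\geq 2$, contributes a definite positive amount to $\tau^*=\tau-s-c$ and to $\#D_n=\rho(X_n)+n$, and these quantities are coupled to $p_2$ through Proposition \ref{lem:basics}(a)--(b). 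Combining the global inductance bound with the individual per-cusp estimates from \ref{lem:log BMY}(a) should overdetermine the system once $c=6$.

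I expect the main obstacle to be bookkeeping the boundary terms $b_0(\Delta_n)$, $\#\Upsilon_n^0$ and $\delta$ sharply enough: these count connected components of $(-2)$-twigs, certain superfluous $(-1)$-curves, and an inductance correction, and a loose estimate on any of them could leave just enough room to admit $c=6$ spuriously. The delicate point is that $\delta=\delta(\Delta_n^-)>0$ and the term $b_0(\Delta_n)+\#\Upsilon_n^0$ on the right are not independent — controlling their difference is what ultimately closes the gap. The strategy to handle this is to split the cusps by type (semi-ordinary versus not, and by the value of $\tau_j^*$), apply the strict inequality of \ref{lem:log BMY}(a) cusp-by-cusp to the non-semi-ordinary ones while using the explicit $[2,1,3,(2)_{m-1}]$ resolution data for the semi-ordinary ones, and then check that for $c=6$ no distribution of cusp types simultaneously satisfies the inductance ceiling $5-p_2$ and the inequality \eqref{eq:basic_bound}. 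Since $c\leq 6$ is already known, the work is genuinely the elimination of the single extremal value $c=6$, so I would keep the argument quantitative and aim to exhibit an explicit contradiction such as a strict inequality of the form $6\leq c<6$.
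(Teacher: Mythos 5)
Your numerical reduction follows essentially the same route as the paper: you combine the inequality \eqref{eq:basic_bound} (via Proposition \ref{lem:basics}(c),(d),(e)) with the inductance ceiling $\ind(D)\leq 5-p_2$ and the per-cusp lower bound of Lemma \ref{lem:log BMY}(a) to force $p_2$ down and squeeze the cusp types. The paper makes the counting precise by noting $\#\psi_*Q_j+\tau_j\geq 3$, with strict inequality for non-ordinary cusps, which yields $1-c_0'+4c\leq 18+p_2+\frac13 c_0'$ and then, together with $2p_2\leq 9-c$ and $\ind(D)\geq \frac56 c_0'$, pins the situation down to $p_2=0$ and $c_0'=c=6$.

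Here is the genuine gap: you expect the bookkeeping to end in a strict inequality of the form ``$6\leq c<6$'', but the numerology does \emph{not} close by itself. The extremal configuration --- all six cusps ordinary, $p_2=0$ --- satisfies every inequality you propose to use with equality or comfortable slack: the genus--degree formula gives $\deg\bar E=5$ (six ordinary cusps contribute exactly $6=\frac{(5-1)(5-2)}{2}$), the inductance sum is exactly $6\cdot\frac56=5=5-p_2$, and \eqref{eq:basic_bound} is consistent. No distribution-of-types argument based on Lemma \ref{lem:log BMY} and Proposition \ref{lem:basics} alone can exclude it. The paper needs a separate geometric input at this point: projecting from one of the (ordinary, multiplicity $2$) cusps of the quintic gives a degree-$3$ map $\bar E\to\P^1$, and the Hurwitz-type Lemma \ref{lem:Hurwitz} then bounds the number of singular points by $2F\cdot E-r_F(p)\leq 5$, contradicting $c=6$. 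Without this final step (or an equivalent geometric obstruction), your argument eliminates only those hypothetical $6$-cuspidal curves having at least one non-ordinary cusp, and the proposition remains unproved.
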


\begin{proof}
Suppose that $c\geq 6$. By Proposition \ref{lem:basics}(c),(d),(e)
\begin{equation}
\#D_n+\tau-s+\delta\leq 3p_2+8+b_0(\Delta_0)+c_0'.
\end{equation}
By Lemma \ref{lem:log BMY}(c), $s+b_0(\Delta_0)=b_0(\Delta_0')\leq 10-2p_2-\frac{2}{3}c_0'$, hence 
\begin{equation}\label{eq:|Dn|}
\#D_n+\tau+\delta\leq 18+p_2+\frac{1}{3}c_0'.
\end{equation}
We have $\#\psi_*Q_j+\tau_j\geq 3$ and the inequality is strict for non-ordinary cusps $q_j\in \bar E$, so $\#D_n+\tau\geq 1+3c_0'+4(c-c_0')$. Then \eqref{eq:|Dn|} gives $$1-c_0'+4c\leq \#D_n+\tau\leq 18+p_2+\frac{1}{3}c_0',$$ hence $7\leq 4c-17\leq p_2+\frac{4}{3}c_0'$. By Lemma \ref{lem:log BMY}(a),(c) we have $2p_2\leq 9-c\leq 3$, so $p_2\leq 1$ and hence $c_0'\geq 5$. Then $4<5\cdot \frac{5}{6}\leq \ind(D)\leq 5-p_2$, so $p_2=0$. Now the above inequality gives $2c+(c-c_0')\leq 12$, hence $c_0'=c=6$. By the genus-degree formula $\deg \bar E=5$. The restriction to $\bar E$ of the projection from any of the cusps has degree $3$. By Lemma \ref{lem:Hurwitz}, $\#\Sing E<2F\cdot E=6$; a contradiction.
\end{proof}

To prove Theorem \ref{thm:at_most_4_cusps} we may, and will, assume from now on that $c=5$ or $c=4$. We will proceed in a similar way as in the proof of Proposition \ref{prop:c<=5}, but now we need to couple previous arguments with a more detailed analysis of the geometry of the divisors $Q_j$ and of the minimalization process $$\psi=\psi_n\circ\dots\circ\psi_1\: (X_0,D_0)\rightarrow (X_n,D_n).$$ The morphism $\psi_{i+1}$ lifts to the level of minimal log resolutions, see \eqref{eq:log_res}, $\varphi_i\:(X_i',D_i')\to (X_i,D_i),$ where $(X_0',D_0')=(X,D)$. We denote the lift by $\psi_{i+1}'\:(X_i',D_i')\to (X_{i+1}',D_{i+1}')$ and we put $\psi'=\psi'_n\circ\dots\circ\psi'_1$. We get a commutative diagram:
$$ \xymatrix{{} & {(X,D)}\ar@{>}[r]^-{\psi'}\ar@{>}[d]^{\varphi_0}\ar@{>}[ld]_-{\ \pi} &{(X_n',D_n')}\ar@{>}[d]^{\varphi_n} \\
 {(\P^2,\bar E)} & {(X_0,D_0)}\ar@{>}[r]_-{\psi}\ar@{>}[l]^-{\ \pi_0} &{(X_n,D_n)}}$$
We use log resolutions mostly to compute the inductance and to effectively use the logarithmic Bogomolov-Miyaoka-Yau inequality. We denote the proper transform of $E_0$ on $X_i'$ by $E_i'$. By $Q_j'\subseteq D$ we denote the exceptional divisor of $\pi_0\circ \varphi_0$ over $q_j$. Clearly, $\varphi_0(Q_j')=Q_j$. We define $\ind_j^{(i)}$ as the sum of inductances for maximal (admissible) twigs of $D_i'$ contained in the image of $Q_j'$ under $\psi_i'\circ\dots\circ\psi_1'$. Since $c\neq 1$, $E_i'$ is not a twig of $D_i'$, hence $\ind(D_i')=\ind_1^{(i)}+\cdots+\ind_c^{(i)}$. The log BMY inequality, Lemma \ref{lem:log BMY}(b), reads for $(X_i',D_i')$ as 
\begin{equation}\label{eq:ind} 
\ind_1^{(i)}+\cdots+\ind_c^{(i)}+p_2\leq 5-i, \quad  i=0,\ldots,n.
\end{equation}
We write $\ind_j:=\ind_j^{(0)}$ for simplicity. Note that $\ind_j=\ind(Q_j')$, but things are more complicated for $\ind_j^{(i)}$ with $i>0$, as for instance twigs of images of distinct $Q_j'$ may meet, and hence will no longer be twigs of $D_i'$.

Recall that given two effective $\Q$-divisors $T_1$, $T_2$ we denote by $T_1\wedge T_2$ the unique effective $\Q$-divisor such that for $j=1,2$ the divisors $T_j-T_1\wedge T_2$ are effective and have no common component. Denote by $\Delta_i\tip$ the sum of $(-2)$-tips of $D_i$. Each connected component of $\Delta_i$ contains a unique component of $\Delta_i\tip$. Put
\begin{equation}
\bar Q_j=\psi_*(Q_j), \quad \Omega_j=\bar Q_j-(\Upsilon_n^0+\Delta_n\tip)\wedge \bar Q_j 
\end{equation}  
and define the integers:
\begin{equation}\label{eq:lambda_def}
\lambda_{j}:=\#\Omega_j+\tau_j-s_j\geq \#\Omega_j+1.
\end{equation}
An important ingredient in our analysis is combining the inequality \eqref{eq:ind} with the inequality \eqref{eq:basic_bound}. Since $\#\Omega_j=\#(\bar Q_{j}- \Upsilon_{n}^{0}\wedge \bar Q_{j})-b_{0}(\Delta_{n}\wedge \bar Q_{j})$, we can write the latter as 
\begin{equation}\label{eq:lambda} 
c+\sum_{j=1}^c(\#\Omega_j+\tau_j^*)= \sum_{j=1}^c \lambda_j\leq 3p_2+7-\delta.
\end{equation}
We will see below that $p_2\leq 2$, so the right hand side is bounded from above by $13$, hence $\lambda_1+\ldots+\lambda_c$, which roughly measures the number of components of $\psi(D_0-E_0)$, cannot be too big. We now proceed to describe cusps $q_j\in \bar E$ with small $\lambda_j$. We first collect some simple observations.

\begin{lem}\label{lem:basic_bounds}\
\begin{enumerate}[(a)]  
\item We have $\#\Omega_j=0$ if and only if $q_{j}\in \bar{E}$ is an ordinary cusp.
\item If $q_{j}\in \bar{E}$ is a semi-ordinary cusp then $\lambda_{j}=\#\Omega_j+1=\#Q_{j}$.
\item If $s_j=0$ then $\#\Omega_j\geq 2$ and $\lambda_j\geq 4$.
\item If $\tau_j=2$ then $\psi$ does not touch $C_j$.
\item \label{le:typesII} We have $\lambda_j=2$ if and only if $q_j$ is a semi-ordinary cusp with $Q_j=[1,2]$. In particular, $\ind^{(n)}_j=\frac{11}{10}$ in case $\lambda_j=2$.
\item $p_2\in \{0,1,2\}$.
\item If $s_j=1$ then $\ind_j^{(n)}\geq 1-\frac{1}{\tau_j}\geq \frac{1}{2}$. 
\item If $\lambda_j=4$ then $\ind_j^{(n)}\geq \frac{1}{2}$ or $\psi(C_j)$ is a $(-1)$-tip of $\bar Q_j$, $\tau_j=2$ and $\ind_j^{(n)}\geq \frac{1}{3}$.
\end{enumerate}
\end{lem}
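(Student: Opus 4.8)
The plan is to reduce each of the eight assertions either to an inductance computation on the minimal log resolution or to the numerical identity $\lambda_j=\#\Omega_j+\tau_j-s_j$ together with $\tau_j^*=\tau_j-s_j-1\geq 0$. I would first record the behavior of the two distinguished curves $C_j,\tilde C_j$: since $\tau_j=C_j\cdot E_0\geq s_j+1\geq 1$, the curve $C_j$ always meets $E_0$, with multiplicity $\tau_j$, at the unique non-snc point of $D_0$ over $q_j$. When $s_j=1$ this meeting is a tangency of order $\tau_j$, and the exceptional divisor of the minimal log resolution over $q_j$ is the chain $[1,(2)_{\tau_j-1}]$, whose $(-2)$-part is a maximal $(-2)$-twig of inductance at least $\frac{\tau_j-1}{\tau_j}=1-\frac{1}{\tau_j}$ (by Lemma \ref{lem:ind} a longer twig only increases this). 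For semi-ordinary cusps I would invoke Lemma \ref{lem:psi-properties}(d): $\psi$ leaves their divisors untouched, so $\bar Q_j=Q_j=[1,(2)_{m-1}]$ and I can read off directly which components lie in $\Upsilon_n^0+\Delta_n\tip$.

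I would begin with the purely numerical (f). Taking $i=0$ in \eqref{eq:ind} gives $\sum_{j=1}^c\ind(Q_j')+p_2\leq 5$, and each $\ind(Q_j')$ is at least the contribution of its first two twigs, which exceeds $\frac{1}{2}$ by Lemma \ref{lem:log BMY}(a); as $c\geq 4$ the sum exceeds $2$, forcing $0\leq p_2<3$, hence $p_2\in\{0,1,2\}$. For (d) I would use that a $(-1)$-curve $C_j$ with $C_j\cdot E_0=2$ meets $E_0$ at one point with a common tangent, so it can never be a superfluous curve of any $D_i+A_{i+1}$; combined with the description $\Exc\psi_{i+1}=V+A_{i+1}+W$ with $V,W$ twigs (Lemma \ref{lem:psi-properties}(a)) and the connectivity of $D_0-\Exc\psi\wedge D_0$ (Lemma \ref{lem:psi-properties}(b)), this forces $\psi$ to be an isomorphism near $C_j$. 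With (d) available, the semi-ordinary computation yields (a)($\Leftarrow$) and (b) at once: for $m=1$ one has $\bar Q_j=C_j\in\Upsilon_n^0$, so $\Omega_j=0$; for $m\geq 2$ one finds $C_j\in\Upsilon_n\setminus\Upsilon_n^0$ and that only the tip of the $(-2)$-twig $(2)_{m-1}$ lies in $\Delta_n\tip$, whence $\#\Omega_j=m-1$ and $\lambda_j=\#\Omega_j+1=\#Q_j$.

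The inductance bounds (g), (h) and the inductance value in (e) follow from the twig computation above and the monotonicity of Lemma \ref{lem:ind}. For (g), when $s_j=1$ the $(-2)$-twig over $q_j$ survives in $D_n'$ (the tangency argument of (d) applies for every $\tau_j\geq 2$, and $s_j=1$ forces $\tau_j\geq 2$) and lies in the image of $Q_j'$, so $\ind_j^{(n)}\geq 1-\frac{1}{\tau_j}\geq\frac{1}{2}$. For the value in (e), $Q_j=[1,2]$ lifts to $Q_j'=[2,1,3,2]$, whose two maximal twigs $[2]$ and $[2,3]$ give $\ind_j^{(n)}=\frac{1}{2}+\frac{3}{5}=\frac{11}{10}$. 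For (h) I would split on $s_j$: if $s_j=1$ then (g) already gives $\ind_j^{(n)}\geq\frac{1}{2}$; if $s_j=0$ then $\lambda_j=\#\Omega_j+\tau_j=4$ with $\#\Omega_j\geq 2$ by (c) forces $\tau_j\leq 2$, and inspecting the twigs of $Q_j'$ gives $\ind_j^{(n)}\geq\frac{1}{2}$ except in the single borderline shape where $\psi(C_j)$ survives as a $(-1)$-tip of $\bar Q_j$ with $\tau_j=2$, where the remaining twig still yields $\ind_j^{(n)}\geq\frac{1}{3}$.

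The hardest parts are the converse implications and the case $s_j=0$. For (a)($\Rightarrow$) and the classification in (e) I must exclude all exotic shapes of $\bar Q_j$: if $\#\Omega_j=0$, then every component of the connected tree $\bar Q_j$ lies in $\Upsilon_n^0+\Delta_n\tip$ and hence meets at most one further component of $D_n$; since $\bar Q_j$ is attached to $E_n$, this forces $\bar Q_j$ to be a single curve, and tracing back through $\psi$ (contracting only twigs, Lemma \ref{lem:psi-properties}(a),(b)) forces $Q_j=[1]$, i.e.\ an ordinary cusp. For (e) one then combines this with (c), (d), (g) to pin $\lambda_j=2$ down to $s_j=1$, $\tau_j=2$, $\#\Omega_j=1$, and checks that the only such $Q_j$ is $[1,2]$. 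For (c), $s_j=0$ means $\tilde C_j\neq 0$, so $E_0$ passes through the triple point $C_j\cap\tilde C_j\cap E_0$ and the resolution graph genuinely branches; I would show this branching is not absorbed into $\Upsilon_n^0+\Delta_n\tip$, giving $\#\Omega_j\geq 2$, with the borderline $\tau_j=1$ forcing $\#\Omega_j\geq 3$, so that $\lambda_j=\#\Omega_j+\tau_j\geq 4$ throughout. The genuine obstacle is to control these configurations uniformly — in particular to track how $\psi$ may contract twigs inside $Q_j$ and how the unique non-snc point over $q_j$ migrates — for which the connectivity and simple connectivity in Lemma \ref{lem:psi-properties}(b) is the main safeguard.
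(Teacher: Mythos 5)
The decisive gap is in your treatment of part (d), which is also the part on which (e) and (h) lean. You argue that $C_j$ ``can never be a superfluous curve of any $D_i+A_{i+1}$'' and conclude that $\psi$ is an isomorphism near $C_j$. But ``$\psi$ touches $C_j$'' does not mean that $C_j$ itself is contracted; it means that some component of $\Exc\psi_{i+1}=V+A_{i+1}+W$ meets $C_j$, so that the self-intersection of $\psi(C_j)$ increases. Since $V$ and $W$ are twigs of $D_i$, this happens precisely when a contracted twig of $Q_j$ is adjacent to $C_j$, or when $A_{i+1}$ itself meets $C_j$ --- neither of which is excluded by $C_j$ failing to be superfluous, and neither of which is ruled out by Lemma \ref{lem:psi-properties}(a),(b) alone (the contracted chain $V+A_{i+1}+W$ is perfectly compatible with connectivity of $D_0-\Exc\psi\wedge D_0$ while still abutting $C_j$). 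The paper's proof of (d) occupies roughly two pages: assuming $\tau_1=2$ and that $\psi$ touches $C_1$, it builds a $\P^1$-fibration from the image of $C_1$, applies the Hurwitz formula (Lemma \ref{lem:Hurwitz}), and then eliminates the surviving configurations case by case using \eqref{eq:ind}, \eqref{eq:lambda}, arithmetic genus computations on Hirzebruch surfaces, and Lemma \ref{lem:Sigma}. None of that content is present or replaceable by your local observation, so (d) --- and with it the ``$\psi$ does not touch $C_j$'' steps you invoke in (e) and (h) --- is unproved.

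The remaining parts are essentially the paper's arguments: (f) via \eqref{eq:ind} and Lemma \ref{lem:log BMY}(a); (b) via $Q_j\subseteq\Upsilon_0+\Delta_0^+$; (g) via the $(-2)$-chain $[(2)_{\tau_j-1}]$ from resolving the tangency; (a), (e), (h) in outline. One smaller inaccuracy: in (c) the correct mechanism is not that ``the resolution graph genuinely branches'' (when $s_j=0$ the divisor $Q_j$ may well be a chain), but that $C_j$ and $\tilde C_j$ are both uncontracted by Lemma \ref{lem:psi-properties}(a) and neither image can lie in $\Upsilon_n^0+\Delta_n\tip$, giving $\#\Omega_j\geq 2$; moreover your ``borderline $\tau_j=1$'' case cannot occur, since $\tau_j=1$ would contradict minimality of the weak resolution, and you offer no argument for the claim $\#\Omega_j\geq 3$ you would need there.
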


\begin{proof}
(a) We have $\#\Omega_j=0$ if and only if $\psi_{*}C_{j}\subseteq \Upsilon_{n}^{0}$, which is equivalent to $C_{j}\subseteq \Upsilon_{0}^{0}$, hence to $q_{j}\in \bar{E}$ being an ordinary cusp. 

(b) We have $Q_j\subseteq \Upsilon_0+\Delta_0^+$, so $Q_j$ is not touched by $\psi$. Also, $\lambda_j-1=\#\Omega_j$ and there is a unique component of $\bar Q_j$ in $\Upsilon_n^0+\Delta_n\tip$, so $\#\Omega_j=\#\bar Q_j-1$.

(c) By Lemma \ref{lem:psi-properties}(a), $C_j$ and $\tilde C_j$ are not contracted by $\psi$, so $\psi(C_j+\tilde C_j)$ is a part of $\Omega_j$, hence $\lambda_j+s_j\geq \tau_j+2\geq 4$.

(d) We may assume that $j=1$. Suppose that $\tau_1=2$ and $\psi$ touches $C_1$. By Lemma \ref{lem:psi-properties} any connected component of $\Exc \psi$ meeting $C_1$ is a chain $T+A+W$, where $A$ is the proper transform of one of the curves $A_{i+1}\subseteq X_i$ and $T$, $W$ are twigs of $D_0$, say $\#W\leq \#T$. Contract $(-1)$-curves in $T+A+W$ until $C_1$ is touched once and denote the resulting morphism by $\eta\:X_0\to \eta(X_0)$. The linear system of $F=\eta(C_1)$ defines a $\P^1$-fibration of $\eta(X_0)$. 

Suppose that $(W+A+T)\cdot E_0=0$. Then $F\cdot \eta(E_0)=C_1\cdot E_0=2$. We may assume that $W+T\subseteq Q_1+Q_2$. For $j>2$ the divisor $\eta(Q_j)$  is contained in some fiber. The restriction of the fibration to $\eta(E_0)$ has degree $2$ and has at least three ramification points, namely the points $\eta(E_0)\cap \eta(C_j)$ for $j=1,3,4$. This is a contradiction with the Hurwitz formula.

Thus $W=0$ and $A\cdot E_0=1$. Then $T=[(2)_k]$ for some $k\geq 1$ and $T+A$ is a connected component of $\Exc \psi$. We may assume that $T+A=\Exc \psi_1$, hence $\eta=\psi_1$. The restriction of the fibration to $E_1$ has degree $F\cdot E_1=3$ and has ramification points at $E_1\cap \psi_1(C_j)$ for $j=1,\ldots,c$. By the Hurwitz formula $c=4$ and all ramification indices are equal to $2$. It follows that the cusps $q_2, q_3, q_4$ are semi-ordinary, so $\psi$ does not touch $Q_2+Q_3+Q_4$. For $j=2,3,4$ we write $Q_j=[1,(2)_{t_j}]$, where $t_j\geq 0$ and we compute $\ind_j=\frac{5}{6}+\frac{4t_j}{3(2t_j+3)}\geq \frac{5}{6}$. The inequalities \eqref{eq:ind} and \eqref{eq:lambda} read as 
\begin{equation}\label{eq:sum_ti}
\ind_1^{(n)}+\frac{4}{3}\sum_{j=2}^4 \frac{t_j}{2t_j+3}\leq 2\tfrac{1}{2}-n-p_2,
\end{equation}
\begin{equation}\label{eq:lambda_1}
\lambda_1+t_2+t_3+t_4\leq 3p_2+4-\delta.
\end{equation}

We have $n>0$, so $p_2\leq 1$ and $n\leq 2$. Since $q_1$ is not semi-ordinary, there is a component $U\subseteq Q_1-T-C_1$ meeting $C_1$. By the contractibility of $Q_1$ to a smooth point $U$ is unique and $\psi_1(U)=[k+2]$. We denote by $\theta\:X_n\to \theta(X_n)$ the contraction of $\psi(Q_2+Q_3+Q_4)$. We need to determine the geometry of $V=Q_1-T-C_1-U$. 

First consider the case when $\psi_*(V)=0$. We have $\rho(\theta(X_n))=\#\theta(D_n)-n\leq 3-n$. Suppose that $n=2$. Then $\psi$ does not contract $U$ and $\theta(X_2)=\P^2$. In particular, $\theta(\psi_2(F))\subseteq \P^2$ is not a $0$-curve, so $\psi_2$ touches $F$. It follows that $A_2$ meets $F$ and hence $\psi_2$ touches $\psi_1(U)$ at most once. But then $\theta(\psi(U))^2<0$; a contradiction. Thus $n=1$. Now $\theta(X_1)$ is a Hirzebruch surface, namely $\F_{k+2}$, and $\bar U:=\theta\circ\psi(U)$ is the negative section. Put $\bar F=\theta\circ\psi(C_1)$. Using the multiplicity sequences of singularities of $\theta(E_1)$ we compute that $p_a(\theta(E_1))=3+t_2+t_3+t_4$. On the other hand, on $\F_{k+2}$ we have $K_{\F_{k+2}}\sim-2\bar U-(k+4)\bar F$ and $\theta(E_1)\sim 3\bar U+(3k+6+\alpha)\bar F$, where $\alpha=E_1\cdot \psi(U)$, so $p_a(\theta(E_1))=3k+4+2\alpha$, hence $t_2+t_3+t_4=3k+2\alpha+1\geq 4$. Since $\lambda_1\geq 2$, \eqref{eq:lambda_1} gives $k=p_2=1$ and $\alpha=0$. But the latter gives $s_1=1$, so $\ind_1^{(1)}=\frac{1}{2}+\frac{1}{3}$, which contradicts \eqref{eq:sum_ti}.

We obtain $\psi_*(V)\neq 0$. Consider the case $p_2=1$. Then $n=1$ and $\ind_1^{(1)}\leq \frac{1}{2}$, which by Lemma \ref{lem:log BMY}(a) implies that $Q_1$ is a chain and $U=\tilde C_1$. By the contractibility of $Q_1$ to a smooth point $Q_1=[(2)_k,1,k+2,(2)_{k'}]$ for some $k'\geq 0$. Since $\psi_*(V)\neq 0$, we have $k'\geq 1$ and then the inequality $\ind_1^{(1)}\leq \frac{1}{2}$ fails (see Lemma \ref{lem:ind}); a contradiction.

We are left with the case $p_2=0$. Suppose that $n=2$. By \eqref{eq:sum_ti}, $\ind_1^{(2)} \leq \frac{1}{2}$. If $\psi_2$ touches $\psi_1(C_1)$ then by the argument above (with the proper transform of $A_2$ taken for $A$) we get $\Exc \psi_2=A_2+T'$, where $T'\neq 0$ is a $(-2)$-twig of $D_1$ meeting $\psi_1(C_1)$. Since there is no such, $\psi_2$ does not touch $\psi_1(C_1)$. If $A_2\cdot E_1=0$ then $A_2\cdot \psi_1(Q_1-C_1)=2$, so since $\psi_1(Q_1-C_1)$ is connected, there is a rational circular divisor (loop) in $\psi_1(Q_1-C_1)+A_2$ and hence the intersection of this divisor with a general fiber is at least $2$. The latter is impossible, because $\psi_1(Q_1-C_1-U)+A_2$ is vertical and $\psi_1(U)$ is a $1$-section. So we infer that $A_2\cdot E_1=1$. If $s_1=1$ then, since $\ind_1^{(2)}\leq \frac{1}{2}$, we see that $\psi_1(Q_1)+A_2$ is a chain as above, in which case $\psi_*(V)=0$, contrary to the claim above. Therefore, $s_1=0$ and hence $U=\tilde C_1$. It follows that $\#\Omega_1\geq 2$, so $\lambda_1\geq \#\Omega_1+2\geq 4$. By \eqref{eq:lambda_1}, $q_2, q_3, q_4$ are ordinary cusps, $\Delta_2^-=0$ and $\lambda_1=4$, so $\psi(Q_1-C_1-\tilde C_1)\subseteq \Upsilon_2^0+\Delta_2\tip\wedge \Delta_2^+$. But since $A_2$ meets $E_1$, $\psi_2$ does not create a component of $\Upsilon_2^0$, which gives $\psi(Q_1-C_1-\tilde C_1)\subseteq \Delta_2^+$. We have $\psi_1(\tilde C_1)=[k+2]$ and $k\geq 1$, so since $\psi_2$ touches $\psi_1(\tilde C_1)$ at most once, $\psi(\tilde C_1)\nsubseteq \Upsilon_2$, hence $\Upsilon_2\wedge \psi(Q_1)=0$ and so $\Delta_2^+\wedge \psi(Q_1)=0$. Then $\psi_*(V)=0$; a contradiction.

We have now $p_2=0$ and $n=1$. Since $\Exc \psi=T+A_1$, we have $\Upsilon_1\wedge \psi(Q_1)=0$, so $\Delta_1^+\wedge \psi(Q_1)=0$. Suppose that $\lambda_1>3$. Then \eqref{eq:lambda_1} implies that $q_2, q_3, q_4$ are ordinary, $\lambda_1=4$ and $\Delta_1^-=0$. We obtain $\psi(Q_1)\wedge(\Delta_1+\Upsilon_1)=0$. Then $4=\lambda_1=\#\psi(Q_1-C_1)+3-s_1$, so $\psi(Q_1-C_1)$ has $s_1+1\leq 2$ components, hence $\psi(Q_1)$ is a chain. Since $Q_1$ contracts to a smooth point, we get $\psi(Q_1)=[0,k+2,(2)_{s_1}]$. But $\psi(Q_1)\wedge\Delta_1=0$, so $s_1=0$. Then $\psi_*(V)=0$; a contradiction. We obtain $\lambda_1\leq 3$, hence $s_1=1$ by Lemma \ref{lem:basic_bounds}(c). Since $\psi(U)=[k+2]\nsubseteq \Delta_1$, we infer that $\psi(Q_1-C_1-U)$ is contained in $\Delta_1\tip\wedge\Delta_1^-$. Since $Q_1$ contracts to a point, $\psi(Q_1)=[0,k+2,(2)_m]$ with $m\leq 1$ and $\lambda_1=3$. Since $\psi_*(V)\neq 0$, we have $m=1$. By Lemma \ref{lem:Sigma} every fiber of the fibration induced by $|\theta\circ\psi(C_1)|$ has at most one component not contained in $\theta(\psi(Q_1))$. Then the fiber containing $\Delta_1^-=[2]$ has two components, which is impossible; a contradiction.

(e) Assume that $\lambda_1=2$. We have $\#\Omega_1\geq 1$, because otherwise $q_1$ is an ordinary cusp by (a), in which case $\lambda_1=1$. Since $\lambda_1=\tau_1-s_1+\#\Omega_1\geq 1+\#\Omega_1$, we get $\tau_1-s_1=1$ and $\#\Omega_1=1$, hence $\tau_1=2$, $s_1=1$ and $\bar Q_1-\psi(C_1)\subseteq \Upsilon_n^0+\Delta_n\tip$. The divisor $Q_1$ is a chain, because otherwise by Lemma \ref{lem:psi-properties}(c) the image of the branching component would contribute to $\Omega_1-\psi(C_1)$. Suppose $C_1$ meets two components $T_1$ and $T_2$ of $Q_1$. By (d) those components are not contracted by $\psi$ and their images on $X_n$ are not in $\Upsilon_n^0$, hence are contained in $\Delta_n\tip$. But then $T_1$ and $T_2$ are contained in $\Delta_0$, so $Q_1$ contains $[2,1,2]$ and hence is not negative definite; a contradiction. It follows that $C_1$ is a tip of $Q_1$. By the contractibility of $Q_1$ to a smooth point $Q_1=[1,2,\ldots,2]$, so $q_1$ is semi-ordinary. By (b), $Q_1=[1,2]$, so the exceptional divisor of the minimal log resolution of $q_1$ is $[2,3,1,2]$. We get $\ind^{(n)}_1=\ind_1=\frac{3}{5}+\frac{1}{2}=\frac{11}{10}$.

(f) By \eqref{eq:ind} and  by Lemma \ref{lem:log BMY}(a), $2\leq \frac{1}{2}c<\ind(D)\leq 5-p_2$, so $p_2\leq 2$.

(g) The $(-2)$-chain created by resolving the tangency of $\psi(C_j)$ and $E_n$ creates a maximal $(-2)$-twig $[(2)_{\tau_j-1}]$, so $\ind_j^{(n)}\geq 1-\frac{1}{\tau_j}\geq\frac{1}{2}$.

(h) We may assume that $j=1$ and $\ind_1^{(n)}<\frac{1}{2}$. Then $s_1=0$ by (g) and $\Delta_n\wedge \bar Q_1=0$. We have $4=\lambda_1\geq \#(\psi(C_1)+\psi(\tilde C_1))+\tau_1$, hence $\Omega_1=\psi(C_1)+\psi(\tilde C_1)$ and $\tau_1=2$. It follows that $\bar Q_1-\psi(C_1)-\psi(\tilde C_1)\subseteq \Upsilon_n^0$. By (d), $\psi$ does not touch $C_1$, hence $\psi(C_1)$ is a $(-1)$-tip of $\bar Q_1$. Its proper transform is a $(-3)$-tip of $D_n'$, so $\ind_j^{(n)}\geq\frac{1}{3}$.
\end{proof}

\begin{lem}\label{lem:n<=1} The process of almost minimalization of $(X_0,\frac{1}{2}D_0)$ contracts at most one curve not contained in $D_0$, that is, $n\leq 1$.
\end{lem}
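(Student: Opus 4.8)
The plan is to argue by contradiction and assume $n\geq 2$. The engine of the proof is the interplay between the logarithmic Bogomolov--Miyaoka--Yau inequality \eqref{eq:ind} and the counting inequality \eqref{eq:lambda}, fed by the per-cusp estimates of Lemma \ref{lem:basic_bounds}. Evaluating \eqref{eq:ind} at $i=n$ gives
\begin{equation*}
\sum_{j=1}^{c}\ind_j^{(n)}\leq 5-n-p_2\leq 3-p_2,
\end{equation*}
so the total inductance available at the almost minimal level is severely restricted, and the aim is to show that $c\geq 4$ cusps cannot be accommodated within it.

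First I would collect lower bounds for the individual terms $\ind_j^{(n)}$. Semi-ordinary cusps are never touched by $\psi$ (Lemma \ref{lem:psi-properties}(d)), so for them $\ind_j^{(n)}=\ind_j\geq\frac{5}{6}$, with equality precisely for ordinary cusps. If $s_j=1$ then Lemma \ref{lem:basic_bounds}(g) gives $\ind_j^{(n)}\geq\frac{1}{2}$, whereas $s_j=0$ forces $\lambda_j\geq 4$ by Lemma \ref{lem:basic_bounds}(c), and in the extremal case $\lambda_j=4$ the dichotomy of Lemma \ref{lem:basic_bounds}(h) still yields $\ind_j^{(n)}\geq\frac{1}{3}$; a short separate check, in the spirit of (g) and (h), covers the remaining $s_j=0$ cusps, so that $\ind_j^{(n)}\geq\frac{1}{3}$ holds for every cusp. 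This crude bound already eliminates $p_2=2$: the right-hand side above is then $\leq 3-2=1$, while the cusps contribute at least $c\cdot\frac{1}{3}\geq\frac{4}{3}>1$.

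For $p_2\leq 1$ the two inequalities must be played against each other. A cusp with $s_j=0$ costs at least $4$ in \eqref{eq:lambda} but may contribute as little as $\frac{1}{3}$ to \eqref{eq:ind}, while a cusp with $s_j=1$ costs at least $1$ in \eqref{eq:lambda} and at least $\frac{1}{2}$ to \eqref{eq:ind}. Writing $k$ for the number of cusps with $s_j=0$ and using $\lambda_j\geq 1$ together with $\sum_j\lambda_j\leq 3p_2+7-\delta$, one gets $3k+c\leq 3p_2+7$, hence $k\leq p_2+1$ if $c=4$ and $k\leq p_2$ if $c=5$. Thus at least $c-k$ cusps have $s_j=1$, each worth $\geq\frac{1}{2}$, and $\geq\frac{5}{6}$ if semi-ordinary. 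Substituting these counts into $\sum_j\ind_j^{(n)}\leq 3-p_2$ and tracking how many cusps may be non-semi-ordinary leaves, after arithmetic, only a short list of numerically extremal configurations in which every inequality is nearly an equality: all participating cusps carry the smallest admissible inductance, which forces $\tau_j=2$, short chains $Q_j$, small $\lambda_j\in\{2,3,4\}$ and $\delta=0$.

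The hard part will be exactly these saturated configurations, where the numerical bookkeeping no longer decides matters. Here I expect to reproduce and extend the fibration analysis from the proof of Lemma \ref{lem:basic_bounds}(d): after contracting suitable superfluous $(-1)$-curves inside $\Exc\psi$ one produces a $\P^1$-fibration of a contraction of $X_n$ whose general fibre meets $E_n$ in few points, and Lemma \ref{lem:Hurwitz} applied to the restriction to $E_n$ bounds $\#\Sing\bar E=c$ from above, contradicting $c\geq 4$; Lemma \ref{lem:Sigma} controls the components of degenerate fibres lying off the boundary, and in the borderline cases where no $\P^1$-fibration is available Lemma \ref{lem:ell_fib} supplies an elliptic fibration together with the relevant $p_a$-computations on the resulting Hirzebruch or del Pezzo surface. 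Carrying out this geometric analysis for each surviving pattern of cusp types, and in particular checking that the two independent contractions demanded by $n\geq 2$ cannot coexist with $c\geq 4$, is the main and most laborious obstacle.
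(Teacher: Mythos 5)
Your overall strategy coincides with the paper's: combine the log BMY inequality \eqref{eq:ind} with the counting inequality \eqref{eq:lambda}, feed in the per-cusp estimates of Lemma \ref{lem:basic_bounds}, and reduce to a handful of extremal configurations. However, there are two genuine gaps.

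First, your claim that a ``short separate check'' yields $\ind_j^{(n)}\geq \frac{1}{3}$ for \emph{every} cusp is unjustified and, as stated, not available. The paper proves this bound only for $\lambda_j=4$ (Lemma \ref{lem:basic_bounds}(h)), and the proof there uses $\lambda_j=4$ in an essential way: it forces $\Omega_j=\psi(C_j)+\psi(\tilde C_j)$ and $\tau_j=2$, whence $\psi$ does not touch $C_j$ and its proper transform is a $(-3)$-tip of $D_n'$. For a cusp with $s_j=0$ and $\lambda_j\geq 5$ none of this is forced: the $(-2)$-chain from resolving the tangency of $\psi(C_j)$ with $E_n$ is \emph{not} a twig when $\tilde C_j\neq 0$ (its would-be tip meets $\tilde C_j$), $\tau_j$ may exceed $2$, and $\bar Q_j$ need not contain any tip of $D_n$ at all, so a priori $\ind_j^{(n)}$ can be smaller than $\frac{1}{3}$. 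Your elimination of $p_2=2$ rests on this premise. The conclusion happens to be salvageable because the only $p_2=2$, $n=2$ patterns surviving the other numerics have all non-ordinary cusps with $\lambda_j=4$ (the paper's cases $c_0=0,1$, where $\Lambda=(4,4,4,1)$ and (h) applies), but you must actually run that case analysis rather than invoke a uniform bound.

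Second, the ``saturated configurations'' you defer are where most of the work lies, and your toolbox for them is incomplete. In the paper the decisive case is $n=2$, $p_2=0$ with three semi-ordinary cusps: after pinning down $\Lambda$, $\delta=0$ and $\ind_1^{(2)}\leq\frac{1}{2}$, the argument splits on $s_1$. For $s_1=1$ the contradiction comes from contracting $\Upsilon_2^0+\psi(C_1)$ to $\P^2$ and observing that a four-cuspidal plane curve with all cusps ordinary violates the genus--degree formula; for $s_1=0$ it comes from showing that both exceptional curves $A_1,A_2$ would force two disjoint $(-2)$-twigs of $Q_1$ attached to $\tilde C_1$, contradicting negative definiteness of $Q_1$. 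Neither of these mechanisms (a Picard-rank count identifying the contracted surface as $\P^2$ followed by genus--degree, and a negative-definiteness obstruction on $Q_1$) appears among the tools you list (Hurwitz, Lemma \ref{lem:Sigma}, Lemma \ref{lem:ell_fib}), and it is precisely here that $n\geq 2$ --- i.e.\ the coexistence of two curves $A_1$, $A_2$ --- is exploited. As written, the proposal establishes at best the numerical reduction and leaves the actual elimination of $n=2$ open.
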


\begin{proof}
Recall that $c_0$ is the number of semi-ordinary cusps of $\bar E$. Denote by $\w_3$ be the number of non-semi-ordinary cusps of $\bar E$ for which $\lambda_j=3$. By Lemma \ref{lem:basic_bounds}(c),(g) cusps with $\lambda_j=3$ have $s_j=1$ and $\ind_j^{(n)}\geq\frac{1}{2}$. The inequality \eqref{eq:ind} gives
\begin{equation}\label{eq:1}
\frac{5}{6}c_0+\frac{1}{2}\w_3\leq \sum_{j=1}^{c}\ind_j^{(n)}\leq 5-n-p_2.
\end{equation}
On the other hand, Lemma \ref{lem:basic_bounds}(e) and \eqref{eq:lambda} give 
\begin{equation}\label{eq:2}
4c-\w_3-3c_0=4(c-\w_3-c_0)+3\w_3+c_0\leq \sum_{i=1}^c\lambda_j\leq 3p_2+7-\delta.
\end{equation}
Multiplying the first inequality by $3$ and adding the second one we obtain $\frac{7}{2}c+\frac{1}{2}\w_3+\frac{1}{2}(c-c_0)+3n\leq 22-\delta$.
In particular, $3n\leq 22-\frac{7}{2}\cdot 4=8$, so $n\leq 2$. Suppose that $n=2$. Then the previous inequality gives $\frac{7}{2}c\leq 16$, so $c=4$. We get
\begin{equation*}
9-\w_3-3c_0+\delta\leq 3p_2\leq 9-\frac{3}{2}\w_3-\frac{5}{2}c_0.
\end{equation*}
Since $n\neq 0$, we have $c_0\leq c-1=3$ and we may assume that $\psi$ touches $Q_1$. By Lemma \ref{lem:basic_bounds}(e), (f), $\lambda_1\geq 3$ and $p_2\leq 2$.

Assume $c_0=0$. By Lemma \ref{lem:basic_bounds}(a),(e), $\lambda_1+\cdots+\lambda_4\geq 4\cdot 3=12$, so $p_2=2$. Then $\frac{1}{2}\w_3\leq \sum_{j=1}^{4}\ind_j^{(n)}\leq 1$ by \eqref{eq:1}, so $\w_3\leq 2$ and hence $\lambda_1+\cdots+\lambda_4\geq 2\cdot 4+2\cdot 3=14$; a contradiction with \eqref{eq:2}.

Assume $c_0=1$. Then $6-\w_3+\delta\leq 3p_2\leq 6\frac{1}{2}-\frac{3}{2}\w_3$, so $\w_3\leq 1$ and $p_2=2$. Then $\w_3=0$, so $\lambda_1+\cdots+\lambda_4\geq 3\cdot 4+1\geq 13$. By \eqref{eq:2} in fact an equality holds, so $(\lambda_1,\lambda_2,\lambda_3,\lambda_4)=(4,4,4,1)$. By \eqref{eq:1}, $\sum_{j=1}^3\ind_j^{(2)}\leq 1-\frac{5}{6}=\frac{1}{6}$. This is a contradiction with Lemma \ref{lem:basic_bounds}(h).

Assume $c_0=2$, say $q_3$ and $q_4$ are semi-ordinary. We have $3-\w_3+\delta\leq 3p_2\leq 4-\frac{3}{2}\w_3\leq 4$, which gives $p_2=1$ and $\w_3=0$. By \eqref{eq:2},  $(\lambda_1,\lambda_2,\lambda_3,\lambda_4)=(4,4,1,1)$. By \eqref{eq:1}, $\ind_1^{(2)}+\ind_2^{(2)}\leq 2-2\cdot \frac{5}{6}=\frac{1}{3}$.  Again, this is a contradiction with Lemma \ref{lem:basic_bounds}(h).

We are left with the case $c_0=3$, say $q_2$, $q_3$ and $q_4$ are semi-ordinary. Then $\delta-\w_3\leq 3p_2\leq\frac{3}{2}(1-\w_3)\leq \frac{3}{2}$, hence $p_2=0$ and $\w_3\geq \delta$. By \eqref{eq:2}, $\lambda_3=\lambda_4=1$, that is, $q_3$ and $q_4$ are ordinary, and $\lambda_1+\lambda_2\leq 5-\delta$. By \eqref{eq:1}, $\ind_1^{(2)}+\ind_2^{(2)}\leq 3-2\cdot \frac{5}{6}=\frac{4}{3}$. If $\lambda_2=2$ then $\lambda_1=3$, so by Lemma \ref{lem:basic_bounds}(e),(g), $\ind_1^{(2)}+\ind_2^{(2)}\geq \frac{1}{2}+\frac{11}{10}>\frac{4}{3}$, which is false. Thus $\lambda_2=1$ and $\lambda_1\in \{3,4\}$. We obtain $\ind_1^{(2)}\leq \frac{4}{3}-\frac{5}{6}=\frac{1}{2}$.

Suppose that $s_1=1$. Since $\ind_1^{(2)}\leq \frac{1}{2}$, it follows that $\tau_1=2$ and $\bar Q_1+E_1$ has no tips. In particular, $\Delta_2\wedge \bar Q_1=0$. By Lemma \ref{lem:basic_bounds}(d), $\psi(C_1)$ is a $(-1)$-curve meeting any other component of $\bar Q_1-\psi(C_1)$ at most once, hence disjoint from $\Upsilon_2^0$. Let $\theta$ be the contraction of $\Upsilon_2^0+\psi(C_1)$. We have $\rho(\theta(X_2))=\#\theta(\bar Q_1+E_2)-2=\#(\bar Q_1-\psi(C_1)-\Upsilon_2^0\wedge\bar Q_1)-1=\lambda_1-3$, hence $\lambda_1=4$ and $\theta(E_2)$ is a rational cuspidal curve on $\P^2$ with exactly four cusps, all ordinary. The genus-degree formula implies that such a curve does not exist; a contradiction.

Thus $s_1=0$ and, by Lemma \ref{lem:basic_bounds}(c), $\lambda_1=4$. Since $\delta\leq \omega_3$, we have $\Delta_2^-=0$. The curve $\psi(\tilde C_1)$ is a component of $\Omega_1$, hence $\Omega_1=\psi(C_1)+\psi(\tilde C_1)$ and $\tau_1=2$. Thus $\bar Q_1-\psi(C_1+\tilde C_1)\subseteq \Delta_2\tip+\Upsilon_2^0$. But since $\bar Q_1$ contains no component of $\Upsilon_2-\Upsilon_2^0$, it contains no component of $\Delta_2^+$, hence $\bar Q_1-\psi(C_1+\tilde C_1)\subseteq \Upsilon_2^0$. Consequently, no component of $Q_1$ other than (possibly) $\tilde C_1$ is branching in $Q_1$. Since $\tau_1=2$, by Lemma \ref{lem:basic_bounds}(d), $\psi$ does not touch $C_1$, so $\psi(C_1)\cdot \Upsilon_2^0=0$, hence $\psi(C_1)$ is a $(-1)$-tip of $\bar Q_1$. Since $q_2$, $q_3$ and $q_4$ are semi-ordinary, $\psi$ does not touch $Q_2+Q_3+Q_4$, so the proper transform of $A_i$, $i\in \{1,2\}$ meets exactly one component of $T_i\wedge \Delta_0^-$, where $T_i$ is a twig of $Q_1$ meeting $\tilde C_1$, other than $C_1$. We have $\psi(T_i)\subseteq \Upsilon_2^0$, so the proper transform of $A_i$ meets $\tilde C_1$ too, and $\psi$ contracts all but one component of $T_i$. It follows that $T_1+T_2\subseteq \Delta_0^-$, hence $Q_1$ is not negative definite; a contradiction.
\end{proof}

By Lemma \ref{lem:n<=1} we have $n\leq 1$, so $\psi$ touches $Q_j$ if and only if $A_1\cdot Q_j>0$. For convenience we put  $A_1=0$ if $n=0$.

\begin{prop}\label{prop:lambda=3}Assume $\lambda_j=3$. Then $s_j=1$ and $\psi$ does not touch $Q_j$. Moreover, one of the following holds:
\begin{enumerate}[(1)]
\item $\tau_j=2$. Then $Q_j=[1,2,2]$ or $Q_j=[2,1,3]$ or $Q_j=[2,1,3,2]$. We have $\ind_j=\frac{17}{14}$ or $\ind_j=\frac{4}{3}$ or $\ind_j=\frac{8}{5}$, respectively. 

\item $\tau_j=3$. Then $Q_j=[1]$ or $Q_j=[1,2]$. We have $\ind_j=\frac{11}{12}$ and $\ind_j=\frac{26}{21}$, respectively. 

\end{enumerate}
\end{prop}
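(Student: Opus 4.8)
The plan is to first compress the hypothesis into a short numerical list, then prove the geometric assertion that $\psi$ does not touch $Q_j$, and finally read off $Q_j$ together with $\ind_j$ from the resulting combinatorics. For the reductions, since $\lambda_j=3<4$, Lemma~\ref{lem:basic_bounds}(c) forces $s_j=1$; then \eqref{eq:lambda_def} gives $\#\Omega_j+\tau_j=\lambda_j+s_j=4$. From $\tau_j^\ast=\tau_j-s_j-1\geq0$ I get $\tau_j\geq2$, and since $q_j$ is not ordinary (an ordinary cusp has $\#\Omega_j=0$ and hence $\lambda_j=1$ by Lemma~\ref{lem:basic_bounds}(a)) I get $\#\Omega_j\geq1$. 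Thus $(\tau_j,\#\Omega_j)\in\{(2,2),(3,1)\}$, matching the two cases of the statement.

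The crux is to show $\psi$ does not touch $Q_j$. By Lemma~\ref{lem:n<=1} we have $n\leq1$, so I may assume $n=1$ and must prove $A_1\cdot Q_j=0$. Recall from \eqref{eq:ALE} that $A_1\cdot D_0=2$, that $A_1\cdot\Delta_0^-=1$ at a tip of $\Delta_0^-$, and that $A_1\cdot(\Upsilon_0+\Delta_0^+)=0$. First I would exclude that $A_1$ meets two distinct exceptional divisors: if $A_1$ met $Q_j$ and some $Q_k$ with $k\neq j$, then contracting the chain $\Exc\psi_1=V+A_1+W$ of Lemma~\ref{lem:psi-properties}(a) would identify a component of $Q_j$ with a component of $Q_k$, creating in $D_1$ a cycle through $E_1,\psi_1(C_j),\psi_1(C_k)$; as these curves are not contracted (indeed $C_j\cdot E_0=\tau_j\geq2$ makes $C_j$ non-superfluous) this contradicts that $D_1$ is a tree, cf.\ Lemma~\ref{lem:psi-properties}(b). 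Hence $A_1$ meets at most $Q_j$, at its prescribed $\Delta_0^-$-tip, and possibly $E_0$. For $\tau_j=2$, Lemma~\ref{lem:basic_bounds}(d) already gives that $\psi$ does not touch $C_j$, so only $Q_j-C_j$ is at stake. The two remaining configurations are the heart of the matter, and constitute the main obstacle. When $A_1\cdot E_0>0$ I would contract $A_1$ together with the adjacent $(-2)$-twig to obtain a $\P^1$-fibration whose general fiber is the image of $C_j$, and then apply the Hurwitz bound of Lemma~\ref{lem:Hurwitz} and the genus–degree formula exactly as in the proofs of Lemma~\ref{lem:basic_bounds}(d) and Lemma~\ref{lem:n<=1}; the resulting constraints on the number and type of cusps are incompatible with $\lambda_j=3$. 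When instead $A_1$ meets $Q_j$ at a second point the loop lies inside $Q_j+A_1$, and I would track the contraction and use that $\bar Q_j$ must remain negative definite and contract to the smooth point of the cusp to reach a contradiction.

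With $\psi$ not touching $Q_j$ we have $\bar Q_j=Q_j$. I would first observe that $\Upsilon_0^0\cap Q_j=\emptyset$: since $\beta_{D_0}(C_j)=\tau_j+C_j\cdot(Q_j-C_j)\geq3$ in every case, with equality only when $Q_j=[1]$ or when $C_j$ is a tip of $Q_j$ with a non-$(-2)$ neighbour (neither placing the unique $(-1)$-curve $C_j$ into $\Upsilon_0^0$), the divisor $Q_j$ contains no component of $\Upsilon_0^0$. Consequently $\#\Omega_j=\#Q_j-(\text{number of maximal }(-2)\text{-twigs of }Q_j)$. Feeding in $\#\Omega_j=4-\tau_j\leq2$ and using that $Q_j$ is a negative definite rational tree with a unique $(-1)$-curve $C_j$ lying in a twig that contracts to a smooth point, a short inspection of the small admissible trees leaves exactly $Q_j\in\{[1],[1,2]\}$ for $\tau_j=3$ and $Q_j\in\{[1,2,2],[2,1,3],[2,1,3,2]\}$ for $\tau_j=2$; configurations such as $[2,1,2]$ (not negative definite) and $[3,1,3]$ (contracting to a non-smooth point) are discarded.

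Finally, for each surviving $Q_j$ I would pass to the minimal log resolution $Q_j'$, which over the contact point $C_j\cap E_0$ attaches the chain $[1,(2)_{\tau_j-1}]$, lowering $C_j^2$ by $\tau_j$ and introducing a single branch component, and then sum the inductances of the maximal admissible twigs of $Q_j'$ via the recurrence of Lemma~\ref{lem:ind}. For instance $[2,1,3]$ has twigs $[2],[3],[2]$, giving $\ind_j=\tfrac12+\tfrac13+\tfrac12=\tfrac43$; $[2,1,3,2]$ has twigs $[2],[2,3],[2]$, giving $\tfrac12+\tfrac35+\tfrac12=\tfrac85$; $[1]$ has twigs $[4]$ and $[2,2]$, giving $\tfrac14+\tfrac23=\tfrac{11}{12}$; and $[1,2]$ has twigs $[2,4]$ and $[2,2]$, giving $\tfrac47+\tfrac23=\tfrac{26}{21}$, while the semi-ordinary case $[1,2,2]$ is handled through its resolution $[2,1,3,2,2]$ and yields $\tfrac{17}{14}$. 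These match the stated values and complete the proof.
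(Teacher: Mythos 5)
Your reduction to $(\tau_j,\#\Omega_j)\in\{(2,2),(3,1)\}$, the combinatorial identification of the possible $Q_j$ once $\psi$ is known not to touch $Q_j$, and the inductance computations are all correct and agree with the paper. The gap is in the crux, the assertion that $A_1\cdot Q_j=0$. Your first step --- excluding that $A_1$ meets two distinct divisors $Q_j$ and $Q_k$ on the grounds that $D_1$ would then contain a cycle --- is based on a false premise: $D_1$ is not a tree in general. Lemma \ref{lem:psi-properties}(b) asserts simple connectedness of $D_0-\Exc\psi\wedge D_0$ (the uncontracted part of $D_0$ inside $X_0$), not of the image $D_1$; since $D_0+A_1$ contains a loop and $\Exc\psi_1$ is only part of that loop, the image $D_1$ does in general contain a circular subdivisor (the paper exploits exactly such loops elsewhere, e.g.\ in the proof of Lemma \ref{lem:basic_bounds}(d) and in Claim 8 of Theorem \ref{thm:almost_minimal}). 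Consequently the configuration $A_1\cdot Q_1=A_1\cdot Q_2=1$ cannot be dismissed; the paper's proof spends most of its length on precisely this situation (in both cases $\tau_1=3$ and $\tau_1=2$ it reduces to ``$A_1\cdot Q_2=1$'' and then runs a delicate interplay of \eqref{eq:ind}, \eqref{eq:lambda}, the Hurwitz formula and the contractibility of $Q_1$ to a smooth point to derive contradictions such as $\pi_0(A_1)^2=2$ or a $0$-curve image on $\P^2$).

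Beyond that, the two configurations you yourself identify as ``the heart of the matter'' are only sketched: you say you would contract $A_1$ with an adjacent $(-2)$-twig, apply Hurwitz, and find the constraints ``incompatible with $\lambda_j=3$,'' but no such incompatibility is exhibited, and in the paper these cases require, among other things, distinguishing whether $U$ (the neighbour of $C_1$) is branching, tracking where $A_1$ may meet twigs of $Q_1$, and separate eliminations for $p_2=0$ and $p_2=1$ using Lemma \ref{lem:basic_bounds}(h) and the structure of $\Upsilon_1^0$ and $\Delta_1^\pm$. As it stands the proposal proves the easy implication (the classification of $Q_j$ and the values of $\ind_j$ \emph{given} that $\psi$ does not touch $Q_j$) but not the substantive claim that $\psi$ does not touch $Q_j$.
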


\begin{proof}
Renumbering the cusps if necessary, we may assume that $j=1$. By Lemma \ref{lem:basic_bounds}(c), $s_1=1$, so $\tau_1=4-\#\Omega_1\leq 3$. 
 
Consider the case $\tau_1=2$ and $A_1\cdot Q_1=0$. Then $\#\Omega_1=2$, so for some component $U\subseteq Q_1$ meeting $C_1$ we have $Q_1-U-C_1\subseteq\Upsilon_0^0+\Delta_0\tip$. But $q_1$ is not ordinary, so $Q_1\wedge \Upsilon_0^0=0$. Since $Q_1$ contracts to a smooth point and $U\nsubseteq \Delta_0\tip$, we infer that $Q_1$ has three or four components, hence $Q_1$ is $[1,2,2]$, $[2,1,3]$ or $[2,1,3,2]$. In the first case the exceptional divisor of the minimal log resolution over $q_1$ is $[2,1,3,2,2]$, hence $\ind_1=\frac{5}{7}+\frac{1}{2}=\frac{17}{14}$. In the second and third case the exceptional divisor is a fork with maximal twigs $[2]$, $[3]$, $[2,1]$ and $[2]$, $[2,3]$, $[2,1]$ respectively, which gives $\ind_1=\frac{1}{2}+\frac{1}{3}+\frac{1}{2}=\frac{4}{3}$ and $\ind_1=\frac{1}{2}+\frac{3}{5}+\frac{1}{2}=\frac{8}{5}$ respectively.
  
Consider the case $\tau_1=3$ and $A_1\cdot Q_1=0$.  Then $\Omega_1=C_1$, so $Q_1-C_1\subseteq \Upsilon_0^0+\Delta_0\tip$. Since $\tau_1\neq 2$, we have in fact $Q_1-C_1\subseteq \Delta_0\tip$, so because $Q_1$ contracts to a smooth point, we get $Q_1=[1]$ or $Q_1=[1,2]$. The exceptional divisors of the minimal log resolution over $q_1$ are $[4,1,2,2]$ and $[2,4,1,2,2]$, respectively, so $\ind_1=\frac{1}{4}+\frac{2}{3}=\frac{11}{12}$ and $\ind_1=\frac{4}{7}+\frac{2}{3}=\frac{26}{21}$, respectively. 

\smallskip 
By Lemma \ref{lem:n<=1}, $n\leq 1$, so it remains to prove that $A_1\cdot Q_1=0$. Suppose that  $A_1\cdot Q_1\neq 0$. We have $s_1=1$ and $\tau_1=4-\#\Omega_1\leq 3$.

(1$^\circ$) Assume $\tau_1=3$. Then $\Omega_1=\psi(C_1)$, so $$\bar Q_1-\psi(C_1)\subseteq \Upsilon_1^0+\Delta_1\tip.$$ Note that if $\psi(U)\subseteq \Upsilon_1^0+\Delta_1\tip$ for some component $U$ of $Q_1$ not contained in the twigs of $D_0$ then in fact $\psi(U)\subseteq \Upsilon_1^0$, so by Lemma \ref{lem:psi-properties}(a), $U\subseteq \Upsilon_0^0$. But $\Upsilon_0^0\wedge Q_1=0$, so we infer that images of all components of $Q_1$ not contained in twigs of $D_0$ are in $\Omega_1$. Since $\Omega_1=\psi(C_1)$ and $C_1$ is non-branching in $Q_1$, we infer that $Q_1$ is a chain. We can write it as $Q_1=T_1+C_1+T_2$, where $T_1$ and $T_2$ are zero or twigs of $D_0$ contained in $Q_1$ such that $A_1\cdot T_1\geq A_1\cdot T_2$. The curve $A_1$ can meet $T_1+T_2$ only in tips of $Q_1$, because otherwise by Lemma \ref{lem:psi-properties}(c) the image of the non-tip it meets would become a component of $\Omega_1+\bar Q_1\wedge (\Upsilon_1-\Upsilon_1^0)$, which consists of $\psi(C_1)$ only. 

Suppose that $A_1\cdot T_2=1$. Then $A_1$ meets the tips of $Q_1$ contained in $T_1$ and $T_2$. The inclusion $\bar Q_1-\psi(C_1)\subseteq \Upsilon_1^0+\Delta_1\tip$ implies now that $\psi(T_1+T_2)$ is a component of $\Upsilon_1^0$. The contraction of $\bar Q_2+\cdots+\bar Q_c+\psi(T_1+T_2)$ maps $X_1$ onto $\P^2$ so that the images of $C_1$ and $E_0$ are two singular curves with intersection number equal to $\tau_1=3$; a contradiction. 

Thus $A_1\cdot T_2=0$. Suppose that $\psi$ does not touch $C_1$. Then $C_1$ meets some component $U$ of $Q_1-C_1$ and for every such component $\psi_*(U)\neq 0$, so $\psi(U)\subseteq \Delta_1\tip$ and hence $U\subseteq \Delta_0\tip$. It follows that $Q_1=[1,2]$. But then $A_1$ meets $C_1$ or the $(-2)$-tip, hence in any case $\psi$ touches $C_1$; a contradiction.

Thus $\psi$ touches $C_1$. Suppose that $A_1\cdot E_0=1$. Since $A_1$ meets $\Delta_0$ and $\psi$ touches $C_1$, we infer that $T_1$ is a non-zero $(-2)$-chain. Since $A_1\cdot T_2=0$, we see that $\psi(T_2)\subseteq \Delta_1\tip$, so $T_2\subseteq \Delta_0\tip$, hence by the negative definiteness of $Q_1$ we get $T_2=0$, that is, $A_1+Q_1=[1,(2)_k,1]$ for some $k\geq 1$. The contraction of $D_0-E_0+A_1-C_1$ maps $X_0$ onto $\P^2$ and $C_1$ onto a $0$-curve; a contradiction.

Thus $A_1\cdot E_0=0$. Since $A_1\cdot D_0=2$, we may assume that $A_1\cdot Q_2=1$ and $A_1\cdot (Q_3+\cdots+Q_c)=0$. 
Contract $Q_3+\cdots+Q_c$ and then perform the blow-downs constituting $\psi$ until $C_1$ is touched once. If the contractions did not touch $C_2$ then we contract $C_2$, too. Denote the image of $C_1$ by $F$ and the unique intersection point of the images of $Q_2$ and $E_1$ by $q_2'$. By Lemma \ref{lem:Hurwitz}, $\displaystyle (r_F(q_2')-1)+\sum_{j=3}^c(\mu(q_j)-1)\leq 2\cdot 3-1-3=2$, where $\mu(q_j)$ is the multiplicity of the cusp $q_j$. We infer that $c=4$, $r_F(q_2')=1$ and that $q_3$, $q_4$ are semi-ordinary. The equality $r_F(q'_2)=1$ implies that $q_2'$ is a smooth point of the image of $E_1$, hence $C_2$ has not been contracted. We infer that $\psi$ touches $C_2$ before it touches $C_1$. By \eqref{eq:ind}, $\ind_1^{(1)}+p_2\leq 4-2\cdot \frac{5}{6}=2\frac{1}{3}$. Since $s_1=1$ and $\tau_1=3$, we have $\ind_1^{(1)}\geq \frac{2}{3}$, hence $p_2\leq 1$. On the other hand, since $A_1$ meets $Q_2$, we have $\lambda_2\geq 3$, so \eqref{eq:lambda} gives $3p_2-\delta\geq 3+3+1+1-7=1$, hence $p_2=1$. We get $3\leq \lambda_2\leq 7-\delta-\lambda_3-\lambda_4\leq 5-\delta$. Since $\psi$ touches $C_2$, we have $\tau_2\geq 3$ by Lemma \ref{lem:basic_bounds}(d). It follows that $\lambda_2\geq 4$, for otherwise by symmetry $\psi$ touches $C_1$ before it touches $C_2$, which is impossible. Thus $\lambda_2$ equals to $4$ or $5$. 

Suppose that $s_2=1$. Then, since $q_3$ and $q_4$ are semi-ordinary, we get $2\cdot \frac{2}{3}\leq \ind_1^{(1)}+\ind_2^{(1)}\leq 3-\ind_3^{(1)}-\ind_4^{(1)}\leq3-2\cdot \frac{5}{6}=\frac{4}{3}$. It follows that $\ind_j^{(1)}=\frac{2}{3}$ for $j=1,2$ and that $\bar Q_1+\bar Q_2+E_1$ has no twigs. Then $T_2=0$, $\tau_2=3$ and $\bar Q_2=\psi(C_2)$, in which case $\lambda_2=3$; a contradiction.

Thus $s_2=0$. We get $5-\delta \geq \lambda_2\geq 2+\tau_2\geq 5$, so $(\lambda_1,\lambda_2,\lambda_3,\lambda_4)=(3,5,1,1)$, $\tau_2=3$ and $\delta=0$. We have $\Omega_1=\psi(C_1)$, hence $T_2\subseteq \Delta_0^-$ and so $T_2=0$, because $\delta=0$. Then $T_1=[(2)_k]$ for some $k\geq 1$ and hence $A_1$ meets $C_2$ and the tip of $D_0$ contained in $T_1$. Since $\delta=0$ and $\#\Omega_2=\lambda_2-3=2$, we have $\psi(Q_2)=\psi(C_2+\tilde C_2)$, so $Q_2=[1,2]$. Then $\pi_0(A_1)^2=2$, which is not a square; a contradiction.

\smallskip
(2$^\circ$) Now assume $\tau_1=2$.  Since $\lambda_1=3$ and $s_1=1$, we have $\Omega_1=\psi(C_1)+\psi(U)$ for some component $U$ of $Q_1$ and hence  
\begin{equation}\label{eq:lambda=3_1}
\bar Q_1-\psi(C_1)-\psi(U)\subseteq \Upsilon_1^0+\Delta_1\tip.
\end{equation} By Lemma \ref{lem:basic_bounds}(d), $\psi$ does not touch $C_1$, so $U$ meets $C_1$.  By Lemma \ref{lem:psi-properties}(c) all components of $Q_1-U$ are non-branching in $Q_1$.

Suppose that $U$ is a branching component of $Q_1$. Denote by $T_1$ and $T_3$ the twigs of $D_0$ meeting $U$. By the contractibility of $Q_1$ to a smooth point, $T_2:=Q_1-U-T_1-T_3-C_1$ is a $(-2)$-chain and $C_1+T_2$ is a twig of $Q_1$. Note that $A_1$ can meet $T_1+T_2+T_3$ only in tips of $D_0$. Indeed, otherwise the component of $Q_1$ it meets is branching in $D_0+A_1$ and hence is neither contracted by $\psi$ nor contained in $\Upsilon_1^0+\Delta_1\tip$, which contradicts \eqref{eq:lambda=3_1}. We have $A_1\cdot (T_1+T_3)>0$, because otherwise by \eqref{eq:lambda=3_1}, $T_1+T_3\subseteq \Delta_1\tip$, in which case $Q_1$ would not contract to a smooth point. We may therefore assume that $A_1$ meets the tip of $D_0$ contained in $T_1$. Also, $A_1$ does not meet the tip of $D_0$ contained in $T_2$, because otherwise $\psi$ contracts $T_2$ and hence touches $C_1$, contrary to Lemma \ref{lem:basic_bounds}(d). Now \eqref{eq:lambda=3_1} gives $T_2=0$ or $T_2=[2]$. 

Consider the case $A_1\cdot (Q_1+E_0)>1$. If $A_1$ meets $U$, $C_1$ or $E_0$ then by \eqref{eq:lambda=3_1}, $T_3=[2]$ and $T_1=[(2)_k]$ for some $k\geq 1$, so $Q_1$ does not contract to a smooth point, which is false. So the only possibility is that $A_1$ meets the tip of $D_0$ contained in $T_3$. Then by \eqref{eq:lambda=3_1}, $\psi(T_1+T_3)$ is a component of $\Upsilon_1^0$. Let $\theta$ be the composition of $\psi$ with the contraction of $D_1-E_1-\psi(U)$. Then $\theta(U)$ is a rational curve on $\P^2$ with one singular point, a normal crossing node, and it meets $\theta(E_0)$ only at the point $\theta(C_1)$. But the degree of $\theta(U)$ equals $3$ and $\theta(E_0)\cdot \theta(U)$ equals $\tau_1=2$ if $T_2=0$ and equals $2\tau_1=4$ if $T_2=[2]$; a contradiction.

We obtain $A_1\cdot Q_1=1$ and $A_1\cdot E_0=0$. By \eqref{eq:lambda=3_1}, $T_3=[2]$ and $T_2$ is either $0$ or $[2]$. By the contractibility of $Q_1$ to a smooth point we get $T_1=[(2)_k,3]$ for some $k\geq 0$ and $U^2=-2-\#T_2\in \{-2,-3\}$. Since $A_1\cdot Q_1=1$, we have $\psi_*T_1\nsubseteq \Upsilon_1^0$, hence by \eqref{eq:lambda=3_1}, $\psi$ contracts $T_1$. We may assume that $A_1\cdot Q_2>0$. Then $A_1\cdot Q_2=1$ and $A_1\cdot Q_j=0$ for $j\geq 3$. Perform the contractions within $\psi$ until $U$ is touched once, then contract $Q_3+\ldots+Q_c$ and denote the resulting morphism by $\theta$. If $T_2=0$ then $F=\theta(U+C_1)=[1,1]$ and by Lemma \ref{lem:Hurwitz}, $c<4$, which is false. Thus $T_2=[2]$, hence for $F=\theta(U+2C_1+T_2)$ we have $F\cdot E_0=2\tau_1=4$, in which case Lemma \ref{lem:Hurwitz} gives $\sum_{j=2}^c(r_F(q_j)-1)=3$. Then $r_F(q_j)\leq 2$ for some $j\geq 3$, say for $j=3$. We have $\mu(q_3)\leq r_F(q_3)$, so $q_3\in\bar E$ is semi-ordinary. We get $\ind_1^{(1)}=\frac{2}{3}+\frac{1}{2}+\frac{1}{2}=\frac{5}{3}$, $\ind_3^{(1)}\geq \frac{5}{6}$ and $\ind_4^{(1)}>\frac{1}{2}$ (cf.\ Lemma \ref{lem:log BMY}(a)), so by \eqref{eq:ind}, $p_2<1-\ind_2^{(1)}\leq 1$. Thus $p_2=0$, so by \eqref{eq:lambda}, $\lambda_1+\cdots+\lambda_c\leq 7-\delta\leq 6$. But since $A_1$ meets $Q_1$ and $Q_2$, the cusps $q_1$ and $q_2$ are not semi-ordinary, hence by Lemma \ref{lem:basic_bounds}(a),(e), $\lambda_1+\lambda_2\geq 6$; a contradiction.

\smallskip
Thus we proved that $Q_1$ is a chain. We write it as $Q_1=T_1+U+C_1+T_2$, where $T_1$ and $T_2$ are zero or twigs of $Q_1$ meeting $U$ and $C_1$, respectively. As before, we note that $A_1$ can meet $T_1$ and $T_2$ only in tips of $D_0$. If $A_1\cdot T_2=1$ then by \eqref{eq:lambda=3_1}, $\psi$ contracts $T_2$, hence touches $C_1$, which is false by Lemma \ref{lem:basic_bounds}(d). Thus $A_1\cdot T_2=0$. Then $T_2=[2]$ or $T_2=0$. But the second case is impossible, because $q_1$ is not semi-ordinary. By the contractibility of $Q_1$ to a smooth point it follows that $Q_1=[(2)_k,3,1,2]$ for some $k\geq 0$. 

Suppose that $A_1\cdot T_1=0$. Then $A_1$ meets $U$ and some component $T_4\subseteq Q_2\wedge \Delta_0$. Let $\theta$ be the contraction of $Q_3+\ldots+Q_c$ and let $F=\theta(C_1+U+2A_1+T_4)$. Then $F\cdot \theta(E_0)=\tau_1=2$, so by Lemma \ref{lem:Hurwitz}, $1+(\mu(q_3)-1)+(\mu(q_4)-1)\leq 2$; a contradiction. Thus $A_1$ meets the tip of $D_0$ contained in $T_1$. Suppose that it meets $U$ too. Let now $\theta$ be the contraction of $A_1+D_0-E_0-U$. Then $\theta(U)$ is a uninodal curve on $\P^2$ (hence of degree $3$) whose intersection with $\theta(E_0)$ is $4$; a contradiction. 

It follows that $A_1+Q_1=[1,(2)_k,3,1,2]$. Since $A_1$ does not meet $Q_j$ for $j\geq 3$, we have $\ind_j^{(1)}=\ind_j^{(0)}>\frac{1}{2}$ for $j\geq 3$ by Lemma \ref{lem:log BMY}(a), hence by \eqref{eq:ind}, $1+\ind_2^{(1)}+(c-2)\frac{1}{2}<4-p_2$, so $p_2\leq 1$. Suppose that $p_2=0$. By \eqref{eq:lambda}, $(\lambda_1,\lambda_2,\lambda_3,\lambda_4)=(3,1,1,1)$. Since $q_2$ is ordinary, $A_1$ does not meet $Q_2$, so it meets $E_0$. But since $U=[3]$, the contraction of $D_0-E_0+A_1-U$ maps $U$ onto a $0$-curve on $\P^2$; a contradiction.

Thus we are left with the case $p_2=1$. We have $\ind_1^{(1)}=1$, so by \eqref{eq:ind}, $\frac{5}{6}c_0\leq \ind_2^{(1)}+\cdots+\ind_c^{(1)}\leq 2$. It follows that $c_0\leq 2$. By \eqref{eq:lambda}, $\lambda_1+\cdots+\lambda_c\leq 10-\delta<10$, so $c_0=2$, $c=4$ and $\lambda_2\in\{3,4\}$. We get $\ind_2^{(1)}\leq \frac{1}{3}$, so $s_2=0$. By  Lemma \ref{lem:basic_bounds}(c), $\lambda_2=4$. By \eqref{eq:lambda} we have $(\lambda_1,\lambda_2,\lambda_3,\lambda_4)=(3,4,1,1)$. By Lemma \ref{lem:basic_bounds}(h), $\tau_2=2$, $\psi(C_2)$ is a $(-1)$-tip of $\bar Q_2$ and the only contribution to $\ind_2^{(1)}$ comes from the $(-3)$-tip in $D_1'$ which is the proper transform of $\psi(C_2)$. Since $A_1\cdot Q_2\leq 1$, it follows that $Q_2=[1,(2)_{m}]$ for some $m\geq 1$ and $Q_1+A_1+Q_2$ is a chain with $T_2$ and $C_2$ as tips. We have $\Exc \psi\wedge Q_1=T_1$, which is possible only if $m=1$. Let $\theta$ be the contraction of $Q_3+Q_4+A_1+T_1^0$, where $T_1^0$ is the $(-2)$-tip of $T_1$ met by $A_1$. Then $F=\theta(\tilde C_2)$ is a $0$-curve and $F\cdot \theta(E_0)=1$. This is impossible, because $\theta(E_0)$ is singular; a contradiction.
\end{proof}

\begin{lem}\label{lem:n=1_cases} Assume $n=1$.  Put $\Lambda=(\lambda_1,\ldots,\lambda_c)$. 
Then $c=4$ and one may order the cusps so that one of the following holds:
\begin{enumerate}
\item[(0.a)] $p_2=0$, $\delta=0$, $\Lambda=(4,1,1,1)$,  $\ind_1^{(1)}\leq \frac{3}{2}$,  
\item[(1.a)] $p_2=1$, $\delta\in \{0,\frac{1}{2}\}$, $\Lambda=(\lambda_1,1,1,1)$ with $\lambda_1\in\{4,5,6,7\}$,  $\ind_1^{(1)}\leq \frac{1}{2}$,  
\item[(1.b)] $p_2=1$, $\delta=0$, $\Lambda=(\lambda_1,2,1,1)$ with $\lambda_1\in\{4,5,6\}$,  $\ind_1^{(1)}\leq \frac{7}{30}$,  
\item[(1.c)] $p_2=1$, $\delta=0$, $\Lambda=(\lambda_1,3,1,1)$ with $\lambda_1\in\{4,5\}$,   $\ind_1^{(1)}\leq \frac{5}{12}$, $\ind_1^{(1)}+\ind_2^{(1)}\leq \frac{4}{3}$,  
\item[(1.d)] $p_2=1$, $\delta=0$, $\Lambda=(4,4,1,1)$,   $\ind_1^{(1)}+\ind_2^{(1)}\leq \frac{4}{3}$,
\item[(2.a)] $p_2=2$, $\delta=0$, $\Lambda=(\lambda_1,5,1,1)$ with $\lambda_1\in\{5,6\}$,   $\ind_1^{(1)}+\ind_2^{(1)}\leq \frac{1}{3}$,  
\item[(2.b)] $p_2=2$, $\delta=0$, $\Lambda=(5,5,2,1)$, $\ind_1^{(1)}+\ind_2^{(1)}\leq \frac{1}{15}$.   
\end{enumerate}
\end{lem}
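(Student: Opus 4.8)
The plan is to play the two master inequalities for the almost minimal model at level $i=n=1$ against each other: the logarithmic Bogomolov--Miyaoka--Yau bound \eqref{eq:ind}, which for $i=1$ reads $\sum_{j=1}^c\ind_j^{(1)}\leq 4-p_2$, and the component bound \eqref{eq:lambda}, $\sum_{j=1}^c\lambda_j\leq 3p_2+7-\delta$. The per-cusp inputs are: $\lambda_j=1$ exactly for ordinary cusps and $\lambda_j=2$ exactly for the semi-ordinary cusp $[1,2]$ (Lemma \ref{lem:basic_bounds}(a),(e)); every semi-ordinary cusp satisfies $\ind_j^{(1)}\geq\frac56$, with $\ind_j^{(1)}=\frac{11}{10}$ when $\lambda_j=2$; every non-semi-ordinary cusp has $\lambda_j\geq 3$, while $\lambda_j=3$ forces $s_j=1$, $\ind_j\geq\frac{11}{12}$ and, crucially, $Q_j$ \emph{untouched} by $\psi$ (Proposition \ref{prop:lambda=3}); and $\lambda_j=4$ forces $\ind_j^{(1)}\geq\frac13$ (Lemma \ref{lem:basic_bounds}(h)). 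Since $A_1\cdot D_0=2$, the morphism $\psi$ touches at most two of the divisors $Q_j$; by Lemma \ref{lem:psi-properties}(d) it touches no semi-ordinary cusp, and since $n=1$ it touches at least one. Hence there is a non-semi-ordinary cusp, and every touched cusp has $\lambda_j\geq 4$.

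First I would establish $c=4$ by excluding $c=5$. Writing $c_0$ for the number of semi-ordinary cusps, the inductance budget gives $\tfrac56 c_0\leq 4-p_2$, which bounds $c_0$ in each admissible case $p_2\in\{0,1,2\}$ (so there are always non-semi-ordinary cusps). One then splits according to $p_2$ and $c_0$ and derives a contradiction from one of the two budgets. For small $p_2$ the bound $\sum_j\lambda_j\leq 3p_2+7-\delta$ is violated because the touched cusp contributes $\lambda\geq 4$ while each other non-semi-ordinary cusp contributes $\lambda\geq 3$; for instance with $c=5$, $p_2=0$ one already gets $\sum_j\lambda_j\geq 4+4\cdot1=8>7$. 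For larger $p_2$ the inductance budget $\sum_j\ind_j^{(1)}\leq 4-p_2$ is violated because, of the non-semi-ordinary cusps, at least one is untouched and hence contributes $\ind_j^{(1)}=\ind_j^{(0)}>\frac12$ on top of the $\frac56$ from each semi-ordinary cusp; for example with $c=5$, $p_2=2$, $c_0=2$ one gets $\sum_j\ind_j^{(1)}>\tfrac12+2\cdot\tfrac56=\tfrac{13}{6}>2$.

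With $c=4$ fixed the enumeration becomes a finite search organized by $p_2$. The touched non-semi-ordinary cusp, say $q_1$, has $\lambda_1\geq 4$, and the remaining three are constrained by the residual budgets $\sum_{j\geq2}\lambda_j\leq 3p_2+7-\delta-\lambda_1$ and $\sum_{j\geq2}\ind_j^{(1)}\leq 4-p_2-\ind_1^{(1)}$. Running through the possibilities for the second-largest cusp (ordinary, the $[1,2]$ cusp, a $\lambda=3$ cusp, or a second $\lambda=4$ cusp) and subtracting the known contributions of the small cusps from $4-p_2$ reproduces exactly the quoted inductance bounds; e.g.\ in case (1.b) two ordinary cusps and the $[1,2]$ cusp leave $\ind_1^{(1)}\leq 3-2\cdot\tfrac56-\tfrac{11}{10}=\tfrac{7}{30}$, and in case (2.b) they leave $\ind_1^{(1)}+\ind_2^{(1)}\leq 2-\tfrac56-\tfrac{11}{10}=\tfrac{1}{15}$. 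The integrality of $\sum_j\lambda_j$ against $3p_2+7-\delta$ simultaneously pins down $\delta$ and the admissible ranges of $\lambda_1$.

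The main obstacle is the bookkeeping of this finite analysis. One must track, case by case, which $Q_j$ are touched by $\psi$, since this governs whether $\ind_j^{(1)}$ may drop below $\frac12$; correctly invoke the normal forms of Proposition \ref{prop:lambda=3} and the dichotomy of Lemma \ref{lem:basic_bounds}(h) to obtain sharp inductance lower bounds; and verify that the surviving profiles are \emph{exactly} those in the list. The delicate points are the borderline cases where both budgets are nearly saturated, which produce the very tight bounds such as $\tfrac{1}{15}$ and $\tfrac{7}{30}$, together with the determination of the precise value of $\delta$ from the interplay between the integer $\sum_j\lambda_j$ and the BMY inequality.
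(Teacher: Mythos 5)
Your proposal follows essentially the same route as the paper's proof: it plays the same two budgets \eqref{eq:ind} and \eqref{eq:lambda} against each other, with the same per-cusp lower bounds (Lemma \ref{lem:basic_bounds}(a),(e),(h) and Proposition \ref{prop:lambda=3}) and the same key observation that $n=1$ forces a touched cusp with $\lambda_j\geq 4$ while $A_1\cdot D_0=2$ means at most two $Q_j$ are touched and none of them semi-ordinary. The only caution is that a few borderline profiles which the pure budget arithmetic does not kill (e.g.\ $\Lambda=(\lambda_1,4,1,1)$ with $p_2=2$, where one only gets $\ind_1^{(1)}+\ind_2^{(1)}\leq\frac13$) must be excluded by the additional geometric observation that $D$ has at least four maximal twigs over the two large cusps and the proper transform of $A_1$ meets at most two of them, so at least two survive in $D_1'$ and contribute strictly positively beyond the single-twig bound of Lemma \ref{lem:basic_bounds}(h) --- which is exactly how the paper disposes of these cases.
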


\begin{proof} For a positive integer $k$ we denote by $\w_k$ the number of cusps of $\bar E$ for which $\lambda_j=k$. We put $\wg=\displaystyle \sum_{k\geq 5}\w_k$. We may assume that $\lambda_1\geq \lambda_2\geq\ldots\geq \lambda_c.$ Since $\wg+\w_4+\w_3+\w_2+\w_1=c$, \eqref{eq:lambda} gives
\begin{equation}\label{eq:main}
5\wg+4\w_4+3\w_3+2\w_2+\w_1\leq \sum_{k\geq 1} k\w_{k}\leq 3p_2+7-\delta.
\end{equation}
For a cusp with $\lambda_j=1, 2,3$ and $4$ we know (see Lemma \ref{lem:basic_bounds}(e),(h) and Proposition \ref{prop:lambda=3}) that $\ind^{(1)}_j$ is bounded from below by $\frac{5}{6}$, $\frac{11}{10}$, $\frac{11}{12}$ and $\frac{1}{3}$, respectively, so \eqref{eq:ind} gives
\begin{equation}\label{eq:main_ind}
\frac{5}{6}\w_1+\frac{11}{10}\w_2+\frac{11}{12}\w_3+\frac{1}{3}\w_4\leq \sum_{j=1}^c \ind_j^{(1)}\leq 4-p_2.
\end{equation}
Put $\omega_{1,2}=\omega_1+\omega_2$. Since $n=1$, by Proposition \ref{prop:lambda=3}, $\w_4+\wg\geq 1$. We have $p_2\leq 2$ by Lemma \ref{lem:basic_bounds}(f).

\noindent Consider the case $p_2=2$. By \eqref{eq:main_ind}, $\frac{5}{6}\w_{1,2}\leq \sum_{j=1}^c\ind_j^{(1)}\leq 2$, so $\w_{1,2}\leq 2$. If $\omega_{1,2}=0$ then by \eqref{eq:main_ind}, $\w_3\leq 2$, so $\w_4+\wg\geq c-\w_3\geq 2$, in which case \eqref{eq:main} fails. Thus $\omega_{1,2}\in \{1,2\}$. Assume $\omega_{1,2}=2$. Then $\frac{11}{12}\w_3+\frac{1}{3}\w_4\leq \frac{1}{3}$, so $\w_3=0$ and $\w_4\leq 1$. Suppose that $\w_4=1$. By \eqref{eq:main_ind}, $\w_2=0$, so $\w_1=2$. Then by \eqref{eq:main}, $c=4$ and $\Lambda=(\lambda_1,4,1,1)$ for some $\lambda_1\in\{5,6,7\}$. By \eqref{eq:main_ind}, $\ind_1^{(1)}+\ind_2^{(1)}\leq \frac{1}{3}$ and by Lemma \ref{lem:basic_bounds}(h) the latter number is a contribution from a single twig. This is impossible, as $D$ has at least four twigs in total over $q_1$ and $q_2$ and the proper transform of $A_1$ meets at most two of them; a contradiction. Thus $\w_4=0$. By \eqref{eq:main}, $c=4$ and $\wg=\w_5+\w_6\leq 2$, which leads to cases (2.a) and (2.b). Note that the equality $\delta=0$ is a consequence of the inequality $\ind_1^{(1)}+\ind_2^{(1)}\leq \frac{1}{3}$. Assume $\omega_{1,2}=1$. By \eqref{eq:main_ind}, $\frac{11}{12}\w_3+\frac{1}{3}\w_4\leq \frac{7}{6}$, so $\w_3\leq 1$. If $\w_3=1$ we get $\w_4=0$, hence $\sum_{k\geq 1} k\w_{k}\geq 14$, in contradiction to \eqref{eq:main}. Hence $\w_3=0$ and $\wg+\w_4\geq c-1\geq 3$. Then $5\wg+4\w_4\leq 12-\delta$, so $\Lambda=(4,4,4,1)$. Since $A_1\cdot D_0=2$, we may assume that $A_1\cdot Q_3=0$, so by Lemma \ref{lem:log BMY}(a), $\sum_{j=1}^c\ind_j^{(1)}> \frac{5}{6}+2\cdot \frac{1}{3}+\frac{1}{2}=2$; a contradiction.

\smallskip Consider the case $p_2=1$. If $\omega_{1,2}\leq 1$ then by \eqref{eq:main}, $\w_4+\wg+10\leq 1+3(c-1)+\w_4+2\wg\leq 10-\delta$, which is impossible, as $\w_4+\wg\geq 1$ by Proposition \ref{prop:lambda=3}. By \eqref{eq:main_ind}, $\w_{1,2}\in \{2,3\}$. Assume $\omega_{1,2}=2$. By \eqref{eq:main}, $3(c-2)+2\wg+\w_4=5\wg+4\w_4+3\w_3\leq 8-\delta-\w_2$. For $\w_2\neq 0$ we get $c=4$ and $\Lambda=(4,3,2,1)$, in which case \eqref{eq:main_ind} fails. Thus $\w_2=0$ and $\w_1=2$, which gives cases (1.c) and (1.d). Assume $\omega_{1,2}=3$. By \eqref{eq:main_ind}, $\w_3=0$. By \eqref{eq:main}, $4(c-3)+3\leq 5\wg+4\w_4+\w_2+3\leq 10-\delta$, so $c=4$. Now \eqref{eq:main_ind} gives $\w_1\in \{2,3\}$. This gives cases (1.a) and (1.b).

\smallskip
Finally, consider the case $p_2=0$. We have now $\sum_{j=1}^c\ind_j^{(1)}\leq 4$ and $\sum_{j=1}^c \lambda_j\leq 7-\delta$. Since $\lambda_1\geq 4$, we get $\Lambda=(4,1,1,1)$ and $\delta=0$, which is case (0.a).

\smallskip In all cases other than case (1.a) and case (1.c) for $\lambda_1=4$ we have either $3p_2+7-\lambda_1-\ldots-\lambda_c=0$, hence  $3p_2+7-\lambda_1-\ldots-\lambda_c-\delta=\delta=0$ or the sum of $\ind_j^{(1)}$ for non-semi-ordinary cusps is less than $\frac{1}{2}$, which gives $\Delta_1^-=0$ and hence $\delta=0$. In case (1.a) we have $\ind_1^{(1)}\leq \frac{1}{2}$, hence $\Delta_1^-$ is $0$ or $[2]$, which gives $\delta\in\{0,\frac{1}{2}\}$. Finally, in case (1.c) for $\lambda_1=4$ we have $\ind_2^{(1)}\leq \frac{4}{3}-\frac{1}{3}=1$ by Lemma \ref{lem:basic_bounds}(h), so $\delta=0$ by Proposition \ref{prop:lambda=3}.
\end{proof}

Our goal is to eliminate all cases listed in Lemma \ref{lem:n=1_cases}. We collect some observations. Recall that $K_1=K_{X_1}$ and $\bar Q_j=\psi(Q_j)$.

\begin{lem}\label{lem:elements}
Assume that $n=1$ and $j\in\{1,\ldots,4\}$. Put $|\Lambda|=\lambda_1+\ldots+\lambda_4$ and $\gamma_i=-E_i^2$. We have: 
\begin{enumerate}[(a)]
\item $K_1\cdot D_1=p_2+\#D_1-\tau^*-16$,
\item $\#D_1=|\Lambda|+b_0(\Delta_1^-)+\#\Upsilon_1-\tau^*-3$,
\item  $|\Lambda|\leq 3p_2+7-\delta$, 
\item If $U$ is a component of $\bar Q_j-\psi(C_j)$ then $U^2<0$.
\item $K_1\cdot \psi(C_j)+\tau_j^*\geq -1$ and the equality holds if and only if $\tau_j^*=0$ and $\psi(C_j)^2=-1$.
\item Let $r_j$ be the number of outer blow-ups (see Section \ref{sec :5A}) over $q_j\in \bar E$ in the minimal log resolution of $(\P^2,\bar E)$. Then $$\gamma_0+\tau^*=p_2+1-s+\sum_{j=1}^4 (r_j-1).$$
\end{enumerate}
\end{lem}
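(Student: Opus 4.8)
The standing assumption $n=1$ forces $c=4$ by Lemma \ref{lem:n=1_cases}, and I use this throughout. Parts (a), (b), (c) are bookkeeping built on Proposition \ref{lem:basics}. For (a), since $X_1$ is a blow-down of the rational surface $X_0$ it is rational, so Noether's formula with $e_{top}(X_1)=2+\rho(X_1)$ and $\rho(X_1)=\#D_1-1$ (Proposition \ref{lem:basics}(b)) gives $K_1^2=12-e_{top}(X_1)=11-\#D_1$; inserting this into Proposition \ref{lem:basics}(a), which reads $K_1^2+K_1\cdot D_1=p_2-c-\tau^*-1=p_2-\tau^*-5$, yields $K_1\cdot D_1=p_2+\#D_1-\tau^*-16$. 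Part (c) is just \eqref{eq:lambda} with $c=4$. For (b), write $D_1=E_1+\sum_j\bar Q_j$ (the $\bar Q_j$ are pairwise disjoint off $E_1$), so $\#D_1=1+\sum_j\#\bar Q_j$; the displayed identity for $\#\Omega_j$ rearranges to $\#\bar Q_j=\#\Omega_j+\#(\Upsilon_1^0\wedge\bar Q_j)+b_0(\Delta_1\wedge\bar Q_j)$, and summing over $j$ using $\#\Omega_j=\lambda_j-\tau_j+s_j$ together with $\tau-s=\tau^*+4$ gives $\#D_1=|\Lambda|-\tau^*-3+\#\Upsilon_1^0+b_0(\Delta_1)$. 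It then remains to substitute $\#\Upsilon_1^0+b_0(\Delta_1)=\#\Upsilon_1+b_0(\Delta_1^-)$, i.e. to verify $b_0(\Delta_1^+)=\#(\Upsilon_1-\Upsilon_1^0)$; this is the bijection sending each maximal $(-2)$-twig of $\Delta_1^+$ to the unique component of $\Upsilon_1$ it meets, which is well defined and invertible by \cite[Lemma 3.4]{Palka-minimal_models}.

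For (d), by Lemma \ref{lem:psi-properties}(a) the exceptional locus $\Exc\psi=V+A_1+W$ is a single chain contracting to a smooth point of $X_1$; all of its components have $\beta_{D_0+A_1}=2$, so the only non-contracted components of $D_0$ meeting the chain are the two roots $R_V,R_W$ attached to its two ends. A component $U$ of $\bar Q_j-\psi(C_j)$ is $\psi_*U_0$ with $U_0\subseteq Q_j-C_j$, hence $U_0^2\le-2$ by the negative definiteness of $Q_j$ (whose unique $(-1)$-curve is $C_j$). Contracting the chain inductively from $A_1$ outwards raises the self-intersection of each of $R_V,R_W$ by exactly $1$ and leaves every other component disjoint from $\Exc\psi$ untouched; moreover $R_V=R_W$ cannot occur, for then $\psi(R_V)$ would be a nodal component of $D_1$, contradicting that every non-snc point of $D_1$ lies on $E_1$. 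Thus $U^2\le U_0^2+1\le-1<0$.

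For (e), set $\bar C_j:=\psi(C_j)$ and use the discrepancy relation $K_0=\psi^*K_1+\sum_{Z\subseteq\Exc\psi}b_Z Z$ with $b_Z\ge1$. The projection formula and $K_0\cdot C_j=-1$ give $K_1\cdot\bar C_j=-1-\sum_Z b_Z(Z\cdot C_j)$, so $K_1\cdot\bar C_j+\tau_j^*=-1+\bigl(\tau_j^*-\sum_Z b_Z(Z\cdot C_j)\bigr)$, and the assertion becomes the weighted intersection bound $\sum_Z b_Z(Z\cdot C_j)\le\tau_j^*$. I would prove this by distinguishing whether $\psi$ touches $C_j$: if $\tau_j=2$ then $\psi$ does not touch $C_j$ by Lemma \ref{lem:basic_bounds}(d) and moreover $\tau_j^*=0$ and $\bar C_j^2=-1$, giving equality; when $\psi$ does touch $C_j$ one tracks how $A_1$ and the adjacent tip of $\Delta_0^-$ approach $C_j$ along the chain to control $\sum_Z b_Z(Z\cdot C_j)$. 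Equality forces $\tau_j^*=0$ and $C_j$ disjoint from $\Exc\psi$, i.e. $\bar C_j^2=-1$. Pinning down this equality case is the most delicate point of the lemma.

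For (f), I would compute $\gamma_0=-E_0^2$ globally on $X_0$. Noether gives $K_0^2=9-\sum_j\#Q_j$; expanding $K_0\cdot D_0=(\gamma_0-2)+\sum_j K_0\cdot Q_j$ and evaluating $K_0\cdot Q_j=-2\#Q_j-\sum_{Z\subseteq Q_j}Z^2$ through the blow-up tree (so that $\sum_{Z\subseteq Q_j}Z^2=-\sum_k(1+t_{j,k})$, where $t_{j,k}$ is the number of earlier exceptional curves through the center of the $k$-th blow-up), and substituting into Proposition \ref{lem:basics}(a) eliminates $K_0^2$ and $K_0\cdot D_0$ and expresses $\gamma_0+\tau^*$ in terms of $p_2$ and the blow-up combinatorics $\sum_j\#Q_j$ and $\sum_{j,k}t_{j,k}$. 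The remaining step is the purely combinatorial identity identifying $2\sum_j\#Q_j-\sum_{j,k}t_{j,k}-8+s$ with $\sum_j r_j$, the outer-blow-up count of Section \ref{sec :5A}, the term $s=\sum_j s_j$ recording whether $\tilde C_j=0$. I expect this outer/inner bookkeeping, rather than any geometry, to be the main obstacle for part (f); a sanity check on the four-cuspidal quintic (where $\gamma_0=-1$, $\tau^*=0$, $p_2=0$, $s=4$) confirms the constants.
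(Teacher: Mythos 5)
Parts (a)--(c) are correct and essentially identical to the paper's argument; the bijection $b_0(\Delta_1^+)=\#(\Upsilon_1-\Upsilon_1^0)$ that you make explicit in (b) is valid and is implicitly used in the paper as well.

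Part (d) contains a genuine error: the claim that contracting the chain $\Exc\psi_1=V+A_1+W$ ``from $A_1$ outwards'' raises the self-intersection of each root by exactly $1$ is false. At the moment $A_1$ is contracted its two neighbours become adjacent, so every subsequent blow-down of a curve on, say, the $V$-side contributes $+1$ not only to the next curve on the $V$-side but also to whatever currently sits on the $W$-side. Concretely, if $A_1$ meets the free tip of a maximal $(-2)$-twig $[(2)_m]$ and a component $B$ of some $Q_j-C_j$, then $\psi_1$ contracts $A_1$ together with the whole twig and $\psi(B)^2=B^2+m+1$, which can be $\geq 0$ (already $\Exc\psi_1=[1,2]$, as in Figure~\ref{Fig:alm_log_exc}, gives $B^2+2$). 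This is precisely the situation statement (d) must exclude, and it cannot be excluded by bookkeeping of self-intersections: the paper's proof instead assumes $U^2\geq 0$, normalizes to $U^2=0$, and derives a contradiction from the Hurwitz formula (Lemma~\ref{lem:Hurwitz}) applied to the $\P^1$-fibration $|U|$ after contracting $Q_3+Q_4$. Your argument as written proves nothing here.

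For (e) and (f) you have reductions but not proofs. In (e) the reduction to $\sum_Z b_Z(Z\cdot C_j)\leq\tau_j^*$ via discrepancies is fine, but the case where $\psi$ touches $C_j$ --- which is exactly where the content lies, since the discrepancies along the contracted chain grow and $\psi(C_j)^2$ can become large --- is dismissed with ``one tracks how $A_1$ \dots approach $C_j$''; the paper needs a full page here, blowing up over $\psi(C_1\cap E_0)$, applying Lemma~\ref{lem:Hurwitz} and Lemma~\ref{lem:ell_fib}, and using the case list of Lemma~\ref{lem:n=1_cases} together with an arithmetic-genus computation on a Hirzebruch surface to kill the bad cases. (Also note $\tau_j=2$ gives $\tau_j^*=1-s_j$, not $\tau_j^*=0$ unconditionally.) In (f) your skeleton is the right one, but the ``purely combinatorial identity'' you defer is essentially the cited fact \cite[Lemma 2.4(i)]{KoPa-CooligeNagata2} that $K\cdot Q_j'+1$ equals the number of inner blow-ups over $q_j$; without establishing it (and without reconciling your computation on $X_0$ with the definition of $r_j$ on the minimal log resolution $X$, which the paper handles via $-E^2=-E_0^2+\tau$), part (f) is not proved.
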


\begin{proof} (a) By Noether's formula $K_n^2=10-\rho(X_n)=10+n-\#D_n$, so \cite[Lemma 4.3]{Palka-minimal_models} gives $K_n\cdot D_n=p_2-c-\tau^*-n-K_n^2=p_2-c-\tau^*-2n-10+\#D_n$. Since $c=4$ by Lemma \ref{lem:n=1_cases}, we get $K_1\cdot D_1=p_2+\#D_1-\tau^*-16$.

(b) The definition of $\lambda_j$ gives $|\Lambda|=\#(D_n-E_n)-b_0(\Delta_n)-\#\Upsilon_n^0+\tau^*+c$, hence $|\Lambda|+b_0(\Delta_1^-)+\#\Upsilon_1=|\Lambda|+b_0(\Delta_1)+\#\Upsilon_1^0=\#D_1+\tau^*+3$.

(c) This is \eqref{eq:lambda}.  

(d) Suppose that $U^2\geq 0$. By blowing up over $U\setminus E_1$ we may assume that $U^2=0$. Put $d:=U\cdot E_1$. We may assume that $U':=\psi_*^{-1}(U)\subseteq Q_1$ and $A_1\cdot (Q_3+Q_4)=0$. Contract $Q_3+Q_4$ and for $j=1,2$ denote by $m_j$ be the image of $C_j\cap E_0$. By Lemma \ref{lem:Hurwitz} applied to the image of $E_1$, $0\leq r_U(m_1)+r_U(m_2)-2\leq 2d-4$. In particular, $d\geq 2$. The curve $U'$ is a component of $Q_1-C_1$, so $U'\cdot E_0\leq 1$. Since $d>1$, Lemma \ref{lem:psi-properties}(a) gives $U'\cdot E_0=1$ and $U\cdot E_1=2$. It follows that $U'=\tilde C_1$ and $\Exc(\psi)\cdot Q_2=0$. Then $U\cdot \bar Q_2=0$ and so $r_U(m_2)\geq 2$; a contradiction.

(e) We may assume that $\psi(C_j)^2\geq 0$, otherwise the claim is clear. We may also assume that $j=1$ and, since $n=1$, that $A_1\cdot (Q_3+Q_4)=0$. It follows that $\psi$ touches $C_1$ and, by Lemma \ref{lem:basic_bounds}(d), that $\tau_1\geq 3$. Suppose that $K_1\cdot \psi(C_1)+\tau_1^*\leq -1$. We have $K_1\cdot \psi(C_1)+\tau_1^*=\tau_1-\psi(C_1)^2-s_1-3$, so $\tau_1-\psi(C_1)^2\leq s_1+2$. Blow up over $\psi(C_1\cap E_0)$, each time on the intersection of the proper transforms of $\psi(C_1)$ and $\psi(E_0)$, until the proper transform $C'$ of $\psi(C_1)$ is a $0$-curve. Denote the proper transform of $E_1$ by $E'$. Over $\psi(C_1\cap E_0)$ the intersection of $C'$ and $E'$ is $\tau_1'=\tau_1-\psi(C_1)^2$, hence $\tau_1'\leq s_1+2$. 

Assume first that $C'\cdot E'=\tau_1'$. Lemma \ref{lem:Hurwitz} gives $\sum_{j=2}^4(r_{C'}(q_j)-1)\leq \tau_1'-1\leq s_1+1$. Since $\psi$ does not touch $Q_3+Q_4$, the left hand side is at least $2$, so we obtain $s_1=1$, $\tau_1'=3$, $r_{C'}(q_j)=2$ for $j=3,4$ and $r_{C'}(q_2)=1$. It follows that $q_3$ and $q_4$ are semi-ordinary and $\psi$ touches $C_2$. In particular, $\Upsilon_1^0=\psi_*\Upsilon_0^0$ and, by Lemma \ref{lem:basic_bounds}(d), $\tau_2\geq 3$. By Lemma \ref{lem:basic_bounds}(e) and Proposition \ref{prop:lambda=3}, $\lambda_1, \lambda_2\geq 4$. Since $s_1=1$ and $\tau_1'=3$, we have $\ind_1^{(1)}\geq \frac{2}{3}$. Then we are in case (1.d) of Lemma \ref{lem:n=1_cases}, hence $\lambda_1=\lambda_2=4$ and $\delta=0$, so $\Delta_1^-=0$. We have $s_2=1$, because otherwise $\lambda_2\geq 2+\tau_2\geq 5$, which is false. The inequality $\ind_1^{(1)}+\ind_2^{(1)}\leq \frac{4}{3}$ implies that $Q_1+Q_2+E_1+A_1$ has no tips and $\tau_1=\tau_2=3$, which implies that for $j=1,2$, $\#\Omega_j=\lambda_j-2=2$  and $Q_j- C_j$ is a $(-2)$-chain met by $A_1$, one of them necessarily empty. But then one of the $Q_j$ for $j=1,2$, is irreducible, which is impossible.

We are left with the case $C'\cdot E'>\tau_1'$. This is possible only when $A_1\cdot E_0=1$ and $\psi$ contracts $A_1$ and a $(-2)$-twig of $D_0$ meeting $C_1$. Then $\psi(C_1)^2=0$, $\tau_1=\tau_1'$, $\psi$ does not touch $Q_2$ and we have $C'\cdot E'=\tau_1'+1$. Moreover, $(\Delta_1^++\Upsilon_1)\wedge \bar Q_j=0$, unless $q_j\in \bar E$ is semi-ordinary. Since $s_1+2\geq \tau_1'=\tau_1\geq 3$ by Lemma \ref{lem:basic_bounds}(d), we have $s_1=1$ and $\tau_1=3$. Then $\ind_1^{(1)}\geq \frac{2}{3}$, so we are in cases (0.a) or (1.d) of Lemma \ref{lem:n=1_cases}, hence $\lambda_1=4$ and $\Delta_1^-=0$. We obtain $\#\bar Q_1=\#\Omega_1=2$. Then $Q_1=[(2)_k,1,k+2]$ for some $k\geq 1$, so $\ind_1^{(1)}>\frac{2}{3}$. In case (1.d) we get $s_2=0$, so since $\lambda_2=4$, we have $Q_2=C_2+\tilde C_2$ and then the inequality $\ind_2^{(1)}<\frac{2}{3}$ fails. Thus we are in case (0.a). We denote by $\theta\:X_1\to \theta(X_1)$ the contraction of $\psi(Q_2+Q_3+Q_4)$. We have $\rho(\theta(X_1))=2$, so $\theta(X_1)=\F_{k+2}$, $F=\theta(\psi(C_1))$ is a fiber and $U=\theta_*(\bar Q_1)-F$ is the negative section. Since $\theta(E_1)$ has three cusps, all ordinary, $p_a(\theta(E_1))=3$. On the other hand, on $\F_{k+2}$ we have $K_{\F_{k+2}}\sim-2 U-(k+4)F$ and $\theta(E_1)\sim 4 U+4(k+2)F$, so $p_a(\theta(E_1))=6k+9$; a contradiction.

(f) Recall that $Q_j'$ is the reduced exceptional divisor of the minimal log resolution of $(\P^2,\bar E)$ over $q_j$. By \cite[Lemma 2.4(i)]{KoPa-CooligeNagata2}, $K\cdot Q_j'+1$ equals the number of inner blow-ups over $q_j$, so
\begin{equation}\label{eq:KQ_inner_bl}
K\cdot Q_j'=\#Q_j'-r_j-2,
\end{equation}
which gives $K\cdot D=-E^2-3+\#D-\sum_{j=1}^cr_j-2c$. Since $\#D=\rho(X)=10-K^2$, we obtain 
\begin{equation}\label{eq:p2_formula}
p_2=K\cdot (K+D)=-E^2+7-2c-\sum_{j=1}^cr_j.
\end{equation}
Since $E^2=E_0^2-\tau$, we have $-E^2=-E_0^2+\tau^*+c+s$, hence $\gamma_0+\tau^*=p_2+2c-7-s+\sum_{j=1}^c(r_j-1)$.
\end{proof}

We are now ready to make the key step in the proof of Theorem \ref{thm:at_most_4_cusps}.
 
\begin{tw}
\label{thm:almost_minimal}
The surface $(X_0,\frac{1}{2}D_0)$ is almost minimal.
\end{tw}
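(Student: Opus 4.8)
The assertion is equivalent to $n=0$: by the definition recalled above, $(X_0,\tfrac12 D_0)$ is almost minimal precisely when no almost log exceptional curve exists on it, i.e.\ when the process of almost minimalization contracts nothing. Since $n\leq 1$ by Lemma \ref{lem:n<=1}, the plan is to argue by contradiction and exclude $n=1$. Assuming $n=1$, Lemma \ref{lem:n=1_cases} forces $c=4$ and restricts us to the seven numerical profiles (0.a)--(2.b), organized by the value of $p_2\in\{0,1,2\}$. The proof then consists of eliminating each profile in turn; this case analysis is the bulk of the argument.

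In every profile the cusps with small $\lambda_j$ are rigid and untouched by $\psi$: $\lambda_j=1$ forces $q_j$ ordinary (Lemma \ref{lem:basic_bounds}(a)), $\lambda_j=2$ forces $q_j$ semi-ordinary with $Q_j=[1,2]$ (Lemma \ref{lem:basic_bounds}(e)), a cusp with $\lambda_j=3$ is pinned down by Proposition \ref{prop:lambda=3}, and a cusp with $\lambda_j=4$ is controlled through Lemma \ref{lem:basic_bounds}(h). Because $A_1\cdot D_0=2$ and $A_1\cdot\Delta_0^-=1$, the curve $A_1$ meets at most two boundary components, so $\psi$ and $A_1$ are supported over the one or two cusps carrying the largest $\lambda_j$, possibly meeting $E_0$ as well. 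I would then use the bound on $\ind_1^{(1)}$ attached to each profile: by Lemma \ref{lem:log BMY}(a) a small value of $\ind_1^{(1)}$ forces $Q_1$ to be essentially an admissible chain, so that the shape of $\bar Q_1=\psi(Q_1)$ and the position of $A_1$ on it are determined up to the length of an interior $(-2)$-chain.

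The contradictions are expected to come from two recurring mechanisms, exactly as in the preparatory lemmas. First, contracting $\psi(Q_2+\cdots+Q_c)$, and where the Picard rank permits all the remaining boundary except one fibre-like curve, collapses $X_1$ onto $\P^2$ or a Hirzebruch surface $\F_k$; the image of $E_1$ is then a rational cuspidal curve whose degree and singularities are tightly constrained. One compares its arithmetic genus computed from the multiplicity sequences against $p_a$ computed from its class via $K_{\F_k}\sim -2U-(k+2)F$, or applies the Hurwitz formula (Lemma \ref{lem:Hurwitz}) to the pencil $|\psi(C_1)|$ or to a projection from a cusp, capping the number and multiplicities of the surviving cusps. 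Second, the numerical identities of Lemma \ref{lem:elements}---notably $K_1\cdot\psi(C_j)+\tau_j^*\geq -1$ from (e) and the degree relation $\gamma_0+\tau^*=p_2+1-s+\sum_{j}(r_j-1)$ from (f)---combined with the inequalities \eqref{eq:ind} and \eqref{eq:lambda}, exclude configurations before any contraction and settle the arithmetic for $\lambda_1$ and the $\ind_j^{(1)}$.

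The main obstacle is expected to be the profiles with the largest $\lambda_1$ and with $p_2=2$, namely case (1.a) with $\lambda_1=7$ and cases (2.a) and (2.b). There $Q_1$ can a priori be a long chain, and one must combine the very small admissible values of $\ind_1^{(1)}$ (as low as $\leq\tfrac{1}{15}$) with the fact that $A_1$ meets at most two of the twigs lying over $q_1$ and $q_2$ to force the surviving configuration to violate either the negative definiteness of $Q_1$, or to produce a reduced curve of too-small degree on $\P^2$ (for instance a component whose image becomes a $0$-curve through a singular point), or the Hurwitz bound. Once all seven profiles are ruled out, $n=1$ is impossible; hence $n=0$ and there is no almost log exceptional curve on $(X_0,\tfrac12 D_0)$, which is precisely the statement that $(X_0,\tfrac12 D_0)$ is almost minimal.
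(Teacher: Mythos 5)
Your proposal correctly identifies the reduction ($n\leq 1$ from Lemma \ref{lem:n<=1}, then exclude $n=1$ via the profiles of Lemma \ref{lem:n=1_cases}) and names the right toolbox (Hurwitz formula, contractions onto $\P^2$ or $\F_k$ with arithmetic-genus comparisons, Lemma \ref{lem:elements}, the inequalities \eqref{eq:ind} and \eqref{eq:lambda}). But as written it is an outline, not a proof: every profile is left to a contradiction that is ``expected'' to appear, and none is actually derived. The gap is not merely one of detail. The paper does \emph{not} eliminate the seven profiles one by one; it introduces the divisor $R=\bar Q_1+\bar Q_2-\psi(C_1)-\psi(C_2)-\cL$, where $\cL$ is the set of $(-1)$-curves \emph{created by $\psi$ inside the boundary}, and derives from Lemma \ref{lem:elements}(a),(b) the master identity \eqref{eq:KR} relating $K_1\cdot R$, the quantities $\alpha_j=K_1\cdot\psi(C_j)+\tau_j^*$, $\gamma_1=-E_1^2$, $b_0(\Delta_1^-)$, $\eta=\#(\cL\wedge\Upsilon_1)$ and $\#\cL$. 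Eleven structural claims ($\kappa\geq 0$, $p_2=1$, $\#\cL\leq 1$, $\Delta_1^-=0$, $\alpha\geq-1$, $\eta=1$, $A_1\cdot E_0=0$, and the local shape of $\cL$) are then squeezed out of this identity together with the nefness of $2K_1+D_1^\flat$, before a final elliptic-fibration contradiction via Lemma \ref{lem:ell_fib}(b). Your plan contains no substitute for this bookkeeping of the newly created boundary $(-1)$-curves, and without it the numerics do not close: for instance in profile (1.a) with $\lambda_1=7$ the bound $\ind_1^{(1)}\leq\frac12$ by itself does not determine $\bar Q_1$ or the position of $A_1$ on it, and in profile (1.d) the curve $A_1$ may meet both $Q_1$ and $Q_2$, so the claim that ``the shape of $\bar Q_1$ and the position of $A_1$ are determined up to the length of an interior $(-2)$-chain'' is not justified.

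A secondary inaccuracy: you locate the main difficulty in the $p_2=2$ profiles (2.a), (2.b). In the paper these are dispatched in a single claim (Claim 4) precisely because $\ind_1^{(1)}+\ind_2^{(1)}\leq\frac13$ is so restrictive; the bulk of the work lies in the $p_2=1$ profiles, especially (1.a) and (1.d), where the inductance bounds are weak and one must instead exploit $K_1\cdot R\leq 1$ and the classification of which components of $D_1-E_1-\cL$ the curve $\cL$ can meet. So the approach, if pursued as described, would likely stall exactly where you expect it to be routine.
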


\begin{proof} By Lemma \ref{lem:n<=1}, $n\leq 1$. Suppose that $n=1$. We assume that $\lambda_1\geq \lambda_2\geq \ldots \geq \lambda_c$. We need to rule out the cases listed in Lemma \ref{lem:n=1_cases}. For all of them $c=4$, $q_4$ is ordinary and $q_3$ is either ordinary or semi-ordinary with $\lambda_3=2$. Let $\cL\subseteq D_1-E_1$ be the sum of $(-1)$-curves created by $\psi$, i.e.\ of $(-1)$-curves in $D_1-E_1$ whose proper transforms on $X_0$ are not $(-1)$-curves. Put $\eta=\#(\cL\wedge \Upsilon_1)$.  By the definition of $\psi$ we have $\eta\leq \#\cL\leq 2$.  Our goal is to analyze the divisor $$R=\bar Q_1+\bar Q_2-\psi(C_1)-\psi(C_2)-\cL.$$ Recall that $c_0$ denotes the number of semi-ordinary cusps, which are cusps of multiplicity $2$, and that $\gamma_i=-E_i^2$. By Lemma \ref{lem:elements}(a),(b), $K_1\cdot D_1=p_2+\#D_1-\tau^*-16$ and $\#D_1=|\Lambda|+b_0(\Delta_1^-)+\#\Upsilon_1-\tau^*-3$, hence $$K_1\cdot E_1+\sum_{j=1}^4K_1\cdot \psi(C_j)+K_1\cdot \cL+K_1\cdot R=
K_1\cdot D_1=p_2+|\Lambda|+b_0(\Delta_1^-)+\#\Upsilon_1-2\tau^*-19.$$ We have $K_1\cdot E_1=\gamma_1-2$ and $\#\Upsilon_1=\eta+c_0$, so putting $\alpha_j=K_1\cdot \psi(C_j)+\tau_j^*$ we obtain
\begin{equation}\label{eq:KR}
(\gamma_1+\tau^*+4-2c_0)+\sum_{j:\mu(q_j)\geq 3}\alpha_j+K_1\cdot R=p_2+|\Lambda|+b_0(\Delta_1^-)+\eta+\#\cL-13.
\end{equation}
The proof below is based on analyzing summands of both sides of this equality. Note that by Lemma \ref{lem:elements}(d),(e) we have $\alpha_j\geq -1$ and that $K_1\cdot U\geq 0$ for all components $U\subseteq R$. 

\smallskip

\begin{claim}$\kappa(K_0+\frac{1}{2}D_0)\geq 0$. \end{claim}

\begin{proof} Suppose that $\kappa(K_0+\frac{1}{2}D_0)=-\8$. In particular, $p_2=0$, hence we are in case (0.a) of Lemma \ref{lem:n=1_cases}. Then in the inequality \eqref{eq:lambda} we have in fact equality, which is equivalent to the equality $(2K_1+D_1^\flat)^2=0$. By Proposition \ref{lem:basics}(c), $\P^2\setminus \bar E$ has no $\C^{**}$-fibration, hence by Corollary \ref{cor:almost_min_models}(b), $2K_1+D_1^\flat$ is a pull-back of an anti-ample divisor. But then $(2K_1+D_1^\flat)^2>0$; a contradiction.
\end{proof}

Put $\epsilon=A_1\cdot E_0-E_1\cdot (\cL\wedge \Upsilon_1)$. We have $\epsilon\in\{0,1\}$. Moreover, $\epsilon=1$ if and only if $A_1\cdot E_0=1$ and $A_1\cdot \Delta_0\tip=1$. In the latter case we have in fact $\cL\wedge \Upsilon_1=0$ (otherwise $\psi_*^{-1}\cL$ would meet two $(-2)$-twigs, which is impossible) and $\lambda_j=\#(\bar Q_j-\bar Q_j\wedge\Delta_1\tip)+\tau_j^*+1$ for $j\in \{1,2\}$ such that $A_1\cdot Q_j=1$.

\begin{claim} $\gamma_1+\tau^*+4-2c_0\geq -\epsilon$. \end{claim}

\begin{proof} By Claim 1, $\kappa(K_0+\frac{1}{2}D_0)\geq 0$, hence by Lemma \ref{cor:almost_min_models}(b) the divisor $$2K_1+D_1^\flat=\displaystyle 2K_1+E_1+\sum_{j:\mu(q_j)\geq 3}(\bar Q_j-\bar Q_j\wedge (\Upsilon_1+\Delta_1^+))-\Bk'\Delta_1^-$$ is nef. Since $E_1\cdot \Delta_1=0$, intersecting the above divisor with $E_1$ we obtain $$\displaystyle 4-\gamma_1\leq \sum_{j:\mu(q_j)\geq 3}E_1\cdot\bar Q_j-E_1\cdot (\cL\wedge \Upsilon_1)=A_1\cdot E_0+\sum_{j:\mu(q_j)\geq 3}E_0\cdot Q_j-E_1\cdot (\cL\wedge \Upsilon_1)=\epsilon+\sum_{j:\mu(q_j)\geq 3}(\tau_j^*+2),$$ which proves the claim.
\end{proof}

\begin{claim} $p_2\neq 0$. \end{claim}

\begin{proof} Suppose that $p_2=0$. By Lemma \ref{lem:n=1_cases} we are in case (0.a), hence $\Delta_1^-=0$. Claim 2 and the equality \eqref{eq:KR} give $-1-\epsilon\leq \alpha_1-\epsilon\leq \eta+\#\cL-6$, hence $2\#\cL\geq \eta+\#\cL\geq 5-\epsilon\geq 4$. From the definition of $\psi$ it follows that $\cL$ has at most $2$ components, hence it has exactly two and $\epsilon=1$. But if $\epsilon=1$ then $A_1\cdot E_0=1$, so $\#\cL\leq 1$, because by definition $\cL\wedge E_1=0$; a contradiction.
\end{proof}

\begin{claim} $p_2=1$. \end{claim}

\begin{proof} Suppose that $p_2\neq 1$. By Claim 3 and by Lemma \ref{lem:basic_bounds}(f), $p_2=2$, so we are in case (2.a) or (2.b) of Lemma \ref{lem:n=1_cases}. In particular, $\ind_1^{(1)}+\ind_2^{(1)}\leq \frac{1}{3}$, hence $\Delta_1\wedge (\bar Q_1+\bar Q_2)=0$, $s_1=s_2=0$ and, by Lemma \ref{lem:log BMY}(a), $A_1$ meets both $Q_1$ and $Q_2$. The latter gives $c_0=2$ and implies that $\Upsilon_1\wedge (\bar Q_1+\bar Q_2)=0$ and $A_1\cdot E_0=0$. It follows that $\Upsilon_1=\psi(C_3)+\psi(C_4)$, $\eta=0$ and $\epsilon=0$. We obtain $\#\bar Q_2+\tau_2=\lambda_2=5$ and $\#\bar Q_1+\tau_1=\lambda_1\in \{5,6\}$. The equality \eqref{eq:KR} reads as
\begin{equation}\label{eq:4.1}
(\gamma_1+\tau^*-\lambda_3)+\alpha_1+\alpha_2+K_1\cdot R=\lambda_1-5+\#\cL\leq 1+\#\cL.
\end{equation}
For $j=1,2$ we have $s_j=0$, hence $\tau_j^*\geq 1$, which gives $\alpha_j\geq 0$ by Lemma \ref{lem:elements}(e). By Lemma \ref{lem:elements}(f) we have $\gamma_0+\tau^*=r_1+r_2+r_3-2$. Since $A_1\cdot E_0=0$, we have $\gamma_0=\gamma_1$. For a semi-ordinary cusp $r_j=\#Q_j=\lambda_j$, hence  $\gamma_1+\tau^*-\lambda_3= r_1+r_2-2\geq 0$. It follows that $K_1\cdot R\leq 1+\#\cL\leq 3$, hence every component $U$ of $\bar Q_1+\bar Q_2$ satisfies $U^2\geq -5$ and the equality may hold for at most one such component. For $j=1,2$ we have $\tau_j=\lambda_j-\#\bar Q_j\leq \lambda_j-2=4$, hence $((\varphi_1^{-1})_*\psi_*C_j)^2\geq -1-\tau_j\geq -5$. It follows that every component $U'$ of $Q_1'+Q_2'$ satisfies $(U')^2\geq -6$ and the equality may hold for at most one such component. For $j=1,2$ let now $U_j$ be a tip of $D_1'$ lying in $\varphi_1^{-1}(\bar Q_j)$. Such a tip exists, because there are at least two tips of $D$ over $q_j$ and, since $A_1\cdot Q_j=1$, (the proper transform of) $\Exc \psi$ meets at most one of them. From Lemma \ref{lem:elements}(d) we infer that $U_j^2\leq -2$. We obtain $\ind_1^{(1)}+\ind_2^{(1)}\geq \frac{1}{-U_1^2}+\frac{1}{-U_2^2}\geq \frac{1}{5}+\frac{1}{6}>\frac{1}{3}$; a contradiction with Lemma \ref{lem:n=1_cases}.
\end{proof}

By Claim 4 and Lemma \ref{lem:n=1_cases} the cusps $q_3$ and $q_4$ are ordinary and either $\lambda_2\leq 3$ or $\lambda_1=\lambda_2=4$. By Proposition \ref{prop:lambda=3}, $\psi$ does not touch $Q_j$ with $\lambda_j\leq 3$, hence we may, and will, assume that $A_1\cdot Q_1\neq 0$. Put $\displaystyle \alpha=\sum_{j:\mu(q_j)\geq 3}\alpha_j$. The equality \eqref{eq:KR} reads as
\begin{equation}\label{eq:KR2}
(\gamma_1+\tau^*+4-2c_0)+\alpha+K_1\cdot R=\lambda_1+\lambda_2-10+b_0(\Delta_1^-)+\eta+\#\cL.
\end{equation}

\begin{claim} $\#\cL\leq 1$. \end{claim}

\begin{proof} Suppose that $\#\cL>1$. By Lemma \ref{lem:psi-properties}(a), $\#\cL$ is connected and has exactly two components, they meet normally and $A_1\cdot E_0=0$. We have $\cL\cdot E_1=\psi_*^{-1}\cL\cdot E_0\leq 2$. Consider the case when the components of $\cL$ have intersection number bigger than $1$. By Lemma  \ref{lem:psi-properties}(a) one of them is the image of a component of some twig of $D_0$, so it does not meet $E_0$, hence $\cL\cdot E_1\leq 1$. By Lemma \ref{lem:ell_fib} after contracting  $Q_3+Q_4$ and blowing up three times over $\cL$ we obtain an elliptic fibration for which the proper transform of $E_1$ has at least two cusps and meets a general fiber at most once. But this is impossible, hence $\cL=[1,1]$. After the contraction of $Q_3+Q_4$ the linear system of $\cL$ induces a $\P^1$-fibration of a smooth surface on which the image of $E_1$ is singular (hence horizontal) and meets a general fiber at most twice. In fact, being singular, it meets a general fiber exactly twice, hence $\psi_*^{-1}\cL\cdot E_0=2$, which gives $\psi_*^{-1}\cL=\tilde C_1+\tilde C_2$. Thus $\eta=0$, $s_1=s_2=0$ and $\Exc \psi-A_1$ consists of twigs meeting $\tilde C_1$ and $\tilde C_2$. It follows from Lemma \ref{lem:elements}(e) that $\alpha_j\geq 0$ for $j=1,2$. Moreover, $\psi$ touches $Q_2$ and does not touch $E_0$, hence $\epsilon=0$ and, by Lemma \ref{prop:lambda=3}, $\lambda_2\geq 4$. Then we are in case (1.d) of Lemma \ref{lem:n=1_cases}, hence \eqref{eq:KR2} and Claim 2 give $K_1\cdot R\leq b_0(\Delta_1^-)$. Since $\Delta_1^-=0$ in case (1.d), we infer that $R=\bar Q_1+\bar Q_2-\psi(C_1+C_2+\tilde C_1+\tilde C_2)$ consists of $(-2)$-curves and has no tips. It follows that $R=0$, hence $\tau_j=\lambda_j-2=2$ for $j=1,2$. Let $\theta\:X_1\to \theta(X_1)$ be the contraction of $\psi(C_1+C_2+C_3+C_4)$. Then $\theta(X_1)\cong \P^1\times \P^1$ and $\theta(E_1)\sim 2f_1+2f_2$, where $f_1, f_2$ are fibers of two projections of $\theta(X_1)$ onto  $\P^1$. We compute $\theta(E_1)^2=8$ and hence $\gamma_1=-E_1^2= -\theta(E_1)^2+\sum_{j=1}^4\tau_j^2= -8+16=8$. Then \eqref{eq:KR2} fails; a contradiction.
\end{proof}

\begin{claim} $\Delta_1^-=0$. \end{claim}

\begin{proof} Suppose that $\Delta_1^-\neq 0$. Then we are in case (1.a) of Lemma \ref{lem:n=1_cases} with $\lambda_1\leq 6$, $\Delta_1^-=[2]$ and $\Delta_1^+=0$. Let $E_1'\subseteq D_1'$, $U_1\subseteq D_1'$ and $\bar Q_1'\subseteq D_1'$ be, respectively, the proper transforms of $E_1$, $\Delta_1$ and the reduced total transform of $\bar Q_1$ under the minimal log resolution $\varphi_1\:(X_1',D_1')\to (X_1,D_1)$. Put $A_1':=(\varphi_0)_*^{-1}A_1$. Clearly, $U_1=[2]$  and $U_1$ is a twig of $D_1'$ contained in $\bar Q_1'$. Since $\ind_1^{(1)}\leq \frac{1}{2}$, we infer that $s_1=0$ and that $U_1$ is in fact a unique maximal twig of $D_1'$ contained in $\bar Q_1'$. It follows that the proper transform of $U_1$ on $X$, call it $U_0$, is the unique maximal twig of $D+A_1'$ contained in $Q_1'$ and hence $Q_1'$ has at most one branching  component. Moreover, if $\epsilon=1$ then $Q_1'$ is a chain and $U_0$ is a maximal twig of $D$, so $s_1=1$, which is false. Thus $\epsilon=0$ and so \eqref{eq:KR2} gives $\alpha_1+K_1\cdot R\leq \lambda_1-8+\eta+\#\cL\leq \eta+\#\cL-2$. Since $\tau_1^*\geq 1$, we have $\alpha_1\geq 0$, hence $2\leq \eta+\#\cL\leq 2\#\cL$. By Claim 5 we get $\eta=\#\cL=1$. Since $\Delta_1^+=0$, we have $\cL\subseteq \Upsilon_1^0$. Denote by $C_1'$ the unique $(-1)$-curve in $Q_1'$. The fact that $\psi$ creates a new component of $\Upsilon_1^0$ implies that both components of $Q_1'$ met by $A_1'$ are contained in the same connected component of $Q_1'-C_1'$, hence $U_0$ meets $C_1'$. But then again $s_1=1$; a contradiction.
\end{proof}

\begin{claim} $\alpha\geq -1$. \end{claim}

\begin{proof} Suppose otherwise. Let $j\in \{1,2\}$. By assumption the cusp $q_j\in \bar E$ is not semi-ordinary and  $\alpha_j=-1$. By Lemma \ref{lem:elements}(e), $\tau_j^*=0$ and $\psi$ does not touch $C_j$. We have $s_j=1$, so $\ind_j^{(1)}\geq \frac{1}{2}$, hence by Claim 4 we are in case (1.d) of Lemma \ref{lem:n=1_cases}. In particular, the contribution to $\ind(D_1')$ coming from twigs of $D_1'$ contained in $\bar Q_j'$ and not contracted by $\varphi_1$ is $\ind':=\ind_1^{(1)}+\ind_2^{(1)}-2\cdot \frac{1}{2}\leq \frac{1}{3}$. We obtain $\Delta_1^+\wedge \bar Q_j=0$, hence $(\Upsilon_1-\Upsilon_1^0)\wedge \bar Q_j$. Since $q_j\in \bar E$, $j\in \{1,2\}$ is not semi-ordinary and $\tau_j^*=0$, the divisor $Q_j'$ is not a chain. From Lemma \ref{lem:log BMY}(a) we infer that $A_1$ meets $Q_j$. It follows that $\epsilon=0$ and $\eta=0$, hence \eqref{eq:KR2} gives $\gamma_1+K_1\cdot R=\#\cL\leq 1$ by Claim 5. Let $T_j=\Exc \psi\wedge Q_j$. We have $\#\bar Q_j=\#\Omega_j=\lambda_j+1-\tau_j=3$, so $\bar Q_j$ is a chain. We observe that $\bar Q_j$ contains some tip $U_j$ of $D_1$. Indeed, otherwise $\psi(C_j)$ is a tip of $\bar Q_j$ and $\psi(T_j)$ belongs to the second tip of $\bar Q_j$. But then $Q_j$ is a chain with $C_j$ as a tip, hence it is a chain $[1,(2)_{k_j}]$ for some $k_j\geq 0$, that is, $q_j\in \bar E$ is semi-ordinary, which is false. Since $\epsilon=0$, by Claim 2, $\gamma_1\geq 0$, so we obtain $K_1\cdot (U_1+U_2)\leq K_1\cdot R\leq \#\cL\leq 1$. It follows that $U_1$ or $U_2$ is a $(-2)$-tip, so $\ind'\geq \frac{1}{2}$; a contradiction.
\end{proof}

\begin{claim} $\eta=1$. \end{claim}

\begin{proof} Suppose first that $\#\cL=0$. By Lemma \ref{lem:n=1_cases}, $\lambda_1+\lambda_2\leq 8$. By \eqref{eq:KR2} and by previous claims we have $-1\leq \alpha+K_1\cdot R\leq \epsilon+\lambda_1+\lambda_2-10\leq \epsilon-2$. It follows that $\epsilon=1$, $R$ consists of $(-2)$-curves and there is some $j\in \{1,2\}$ for which $q_j$ is not semi-ordinary and $\alpha_j=-1$, hence $s_j=1$, $\tau_j=2$ and $\psi$ does not touch $C_j$. It follows that $A_1$ meets $Q_j$. Since $\epsilon=1$, we have $A_1\cdot E_0=1$, so $A_1\cdot Q_1=1$. Then the equality $\epsilon=1$ implies that $T:=\Exc \psi-A_1$ is a $(-2)$-twig of $Q_1$, so $Q_1-C_1$ consists of $(-2)$-curves and a unique $(-3)$-curve meeting $T$. Since $\Delta_1^-=0$, the divisor $Q_1+A_1+E_0$ has no tips, hence $Q_1$ is a chain and $C_1$ is its tip. Then $Q_1$ does not contract to a smooth point; a contradiction. Thus $\#\cL=1$. The equality \eqref{eq:KR2} gives
\begin{equation}\label{eq:KR3}
-1+K_1\cdot R\leq \alpha+K_1\cdot R\leq \epsilon+\lambda_1+\lambda_2-9+\eta\leq \epsilon+\eta-1.
\end{equation}

Suppose that $\eta=0$ and $\epsilon=1$. By Claim 6, $\Delta_1\wedge \bar Q_1=0$. The divisor $T=\Exc \psi-A_1$ is a $(-2)$-twig of $Q_1$ and $\psi$ does not touch $Q_2$. Since $\#\cL=1$, the component $B\subseteq Q_1-T$ meeting $T$ is a $(-2)$-curve. We obtain $$K_0\cdot (Q_1-C_1)=K_0\cdot (Q_1-C_1-T-B)=K_1\cdot (\bar Q_1\wedge R)\leq K_1\cdot R\leq 1,$$ so $Q_1-C_1$ consists of $(-2)$-curves and at most one $(-3)$-curve. If $B=\tilde C_1$ then, $C+B+T=[1,(2)_k]$ for some $k\geq 2$, so since $Q_1$ contains no curves with self-intersection number smaller than $-3$, $Q_1=C+B+T$, hence $\bar Q_1=[1,1]$. But the latter is impossible, as the contraction of $\psi(C_1)+\bar Q_2+\bar Q_3+\bar Q_4$ would map $\psi(\tilde C_1)$ onto a $0$-curve on $\P^2$. Thus $B\neq \tilde C_1$. Since $B$ is a $(-2)$-curve not contracted by $\psi$, it is necessarily a branching component of $Q_1$. We have $\Delta_1\wedge \bar Q_1=0$, so the maximal twigs of $Q_1$ are $T=[2]$, $[3]$ and $[1,(2)_k]$ for some $k\geq 0$. In particular, $K_1\cdot R=1$, which gives $\alpha=-1$. We have $\alpha_2\geq 0$, because otherwise $\mu(q_2)>2$ and $\tau_2^*=0$, which is impossible, as $Q_2-C_2$ consists of $(-2)$-curves. It follows that $\alpha_1=-1$, so $\tau_1^*=0$. Let $U$ denote the $(-3)$-tip of $\bar Q_1$. After the contraction of $\bar Q_3+\bar Q_4$ the linear system of $\bar Q_1-U=[1,2,\ldots,2,1]$ induces a $\P^1$-fibration for which Lemma \ref{lem:Hurwitz} gives $\tau_1-1+\mu(q_2)-1+2\leq 2\cdot (1+\tau_1)-2$, which implies that $q_2$ is semi-ordinary. But then $\ind_1^{(1)}+\ind_2^{(1)}\geq \frac{1}{3}+\frac{1}{2}+\frac{5}{6}=\frac{5}{3}$, which contradicts Lemma \ref{lem:n=1_cases}. 

Suppose that $\eta=0$ and $\epsilon=0$. Now \eqref{eq:KR3} reads as
$\alpha+K_1\cdot R\leq \lambda_1+\lambda_2-9\leq -1$, hence $R$ consists of $(-2)$-curves, $\lambda_1+\lambda_2=8$, and $\alpha=-1$. Since there is $j\in \{1,2\}$ with $\alpha_j=-1$ which is not semi-ordinary, hence with $s_j=1$, we have $\ind_j^{(1)}\geq \frac{1}{2}$, so by Lemma \ref{lem:n=1_cases}, $(\lambda_1,\lambda_2)=(7,1)$ or $\lambda_1=\lambda_2=4$.  We have $A_1\cdot Q_1\geq 1$. Suppose that $A_1$ meets $Q_2$. Then $\lambda_1=\lambda_2=4$ and, say, $j=1$, so $\tau_1^*=0$ by Lemma \ref{lem:elements}(e). But since $\Delta_1^-=0$ by Claim 6, there is no tip of $D_1$ contained in $\bar Q_1$, so $Q_1$ is necessarily a chain and $C_1$ is its tip. This means that $q_1$ is semi-ordinary; a contradiction. Thus $A_1\cdot Q_2=0$. Again, if $j=2$ then $q_2$ is not semi-ordinary and $\alpha_2=-1$, which is impossible, as then $\tau_2^*=0$ and $Q_2-C_2$ consists of $(-2)$-curves. Thus $\alpha_1=-1$. By Lemma \ref{lem:elements}(e) it follows that $\tau_1^*=0$ and $\psi$ does not touch $C_1$. The divisor $\bar Q_1-\psi(C_1)$ consists of one $(-1)$-curve $\cL$ and some number of $(-2)$-curves. Because $\Delta_1^-=0$, no tip of $D_1$ is contained in $\bar Q_1$. It follows that $\psi(C_1)$ is a tip of $\bar Q_1$ and $\bar Q_1-\psi(C_1)$ is a circular divisor with $\cL$ as its unique component meeting $\psi(C_1)$. Contract $\bar Q_2+\bar Q_3+\bar Q_4+\cL$. Then the image of $E_1$ has three cusps and is a $2$-section of the $\P^1$-fibration induced by the linear system of the image of $\psi(C_1)$. This is a contradiction with the Hurwitz formula.
\end{proof}

The inequality \eqref{eq:KR2} and Claims 6 and 8 give
\begin{equation}\label{eq:KR4}
\alpha+K_1\cdot R\leq \epsilon+\lambda_1+\lambda_2-8\leq \epsilon.
\end{equation}

\begin{claim} $A_1\cdot E_0=0$. \end{claim}

\begin{proof}Suppose that $A_1\cdot E_0=1$. Let $U=\psi_*^{-1}\cL$. Then $\cL\cdot E_1=1$ and, by Claim 8, $\cL\subseteq \Upsilon_1$. For $\epsilon=1$ this is impossible, hence $\epsilon=0$. Then $\Exc \psi=A_1$, $U=[2]$ and $\Delta_1^+\wedge \bar Q_1\neq 0$. By Claim 7 and \eqref{eq:KR4} $$\displaystyle K_0\cdot (Q_1-C_1+Q_2-C_2)=K_1\cdot R\leq -\alpha\leq 1.$$ Let $\ind_U$ be the contribution to $\ind_1^{(1)}$ from the $(-2)$-twig in $\Delta_1^+$ meeting $\cL=\psi(U)$. Since $Q_1'$ contains at least two tips of $D$ and the proper transform of $A_1$ meets none of them, we get $\ind_1^{(1)}>\ind_U\geq \frac{1}{2}$, hence by Lemma \ref{lem:n=1_cases} we are in case (1.d). Put $\ind_1'=\ind_1^{(1)}-\ind_U>0$. We have $\ind_1'+\ind_2^{(1)}\leq \frac{4}{3}-\frac{1}{2}=\frac{5}{6}$. By Lemma \ref{lem:log BMY}(a), $\ind_1'<\frac{5}{6}-\frac{1}{2}=\frac{1}{3}$, hence $s_1=0$ and if $\bar Q_1-\Delta_{1}^+\wedge\bar Q_1$ contains some tip $V$ of $D_1$ then $V^2\leq -4$. Since $K_0\cdot (Q_1-C_1)\leq 1$, there is no such tip. In particular, $\bar Q_1$ is a chain with $\psi(C_1)$ or $\psi(\tilde C_1)$ as a tip. Then $Q_1$ is a chain, either $Q_1=[(2)_k,3,1,2]$ or $Q_1=[1,(2)_k]$ for some $k\geq 0$. By the inequality above $\tau_1=\alpha_1+2\leq 2$, hence $\tau_1=2$ and $\alpha_1=0$. It follows that $Q_1=[(2)_k,3,1,2]$, because otherwise we get $\ind_1'\geq \frac{1}{3}$. By \eqref{eq:KR4}, $\alpha=-1$ and $Q_2-C_2$ consists of $(-2)$-curves. The former gives $\alpha_2=-1$, which implies that $q_2$ is semi-ordinary. Then $\alpha=\alpha_1=0$; a contradiction.
\end{proof}

By Claim 9, $A_1\cdot E_0=0$, so $\epsilon=0$ and for every component $V\subseteq D_1-E_1$ we have $V\cdot E_1=\psi_*^{-1}V\cdot E_0$. By \eqref{eq:KR4} and by Claim 7, 
\begin{equation}\label{eq:KR4_2}
\displaystyle K_1\cdot R\leq -\alpha\leq 1.
\end{equation}
If $A_1$ meets both $Q_1$ and $Q_2$ then by Lemma \ref{lem:n=1_cases}, $\lambda_1=\lambda_2=4$, so in any case we may, and will, assume that $\cL\subseteq \bar Q_1$.

\begin{claim} $\mathcal{L}$ meets three components of $D_1-\mathcal{L}$ and does not meet $E_1$. \end{claim}

\begin{proof} First suppose that $\cL\cdot E_1\neq 0$. We have $\cL\cdot E_1=\psi_*^{-1}\cL\cdot E_0$, so $\psi_*^{-1}\cL=\tilde C_1$. Since $\psi$ touches $\tilde C_1$, we have $\beta_{D_1}(\cL)\geq 3$, hence $\cL$, as a component of $\Upsilon_1$, meets $\Delta_1^+$. Then $C_1$ meets a connected component of $\Delta_0$ not contracted by $\psi$, so $\beta_{Q_1+A_1}(\tilde C_1)\geq 3$, hence $\beta_{D_1}(\cL)>3$. Then $\cL\nsubseteq \Upsilon_1$; a contradiction.

Thus $\cL\cdot E_1=0$. Suppose that $\cL $ does not meet three components of $D_1-\cL$. We have $\cL\subseteq \Upsilon_1$, so then $\cL$ meets at most two components of $D_1-\cL$, namely some $B\subseteq R+\psi(C_1)$ with $B\cdot \cL=2$ and possibly some component of $\Delta_1^+$. It follows that $\psi$ does not touch $Q_2$. Let $\theta\:X_1\to \theta(X_1)$ be the contraction of $\bar Q_2+\bar Q_3+\bar Q_4$. Suppose that $B\neq \psi(C_1)$. Then $K_1\cdot B\leq K_1\cdot R\leq 1$, so $B^2\in \{-2,-3\}$. By Lemma \ref{lem:ell_fib}(b) we may blow up on $\theta(B)$ so that the linear system of the proper transform of $F=B+2\cL$ induces an elliptic fibration. But $F\cdot \theta(E_1)\leq \psi_*^{-1}B\cdot E_0\leq 1$, so $\theta(E_1)$ is a $1$-section or is vertical. The former is impossible, because $\theta(E_1)$ is not smooth and the latter is impossible, because degenerate fibers of elliptic fibrations of smooth surfaces cannot contain components with more than $1$ cusp; a contradiction. Thus $B=\psi(C_1)$. In particular, $\psi$ touches $C_1$, hence $\tau_1\geq 3$ by Lemma \ref{lem:basic_bounds}(c). By \eqref{eq:KR4_2}, $\alpha\in \{-1,0\}$, hence $\alpha_1\leq 1$. From the definition of $\alpha_j$ we get $\psi(C_1)^2+3=\tau_1-\alpha_1-s_1\geq \tau_1-2$. Again contract $\bar Q_2+\bar Q_3+\bar Q_4$ and blow up $\tau_1-\alpha_1-s_1+1$ times over $\psi(C_1)$, of which the first $\tau_1-\alpha_1-s_1$ times on the intersection of the proper transforms of $\psi(C_1)$ and $E_1$, and then once on the proper transform of $\psi(C_1)$. Denote the final proper transforms of $\psi(C_1)$, $E_1$ and $\cL$ by $C'$, $E'$ and $\cL'$, respectively. We have $(C')^2=-4$ and $C'\cdot E'\leq \alpha_1+s_1\leq 2$. By Lemma \ref{lem:ell_fib}(b) the last center may be chosen so that $|C'+2\cL'|$ induces an elliptic fibration such that $E'$ meets a general fiber at most twice. Since $E'$ has at least two cusps, it cannot be vertical, hence it meets the fiber once or twice. This is impossible by the Hurwitz formula.
\end{proof}

\begin{claim}$\cL$ meets one $(-1)$-curve and one $(-3)$-curve. \end{claim}

\begin{proof}
We have $\eta=1$, so $\cL\subseteq \Upsilon_1$. By Claim 10 there are two distinct components $B_1, B_2\subseteq \bar Q_1+\bar Q_2-\Delta_1$ and a component $T\subseteq \Delta_1^+$ such that $\cL\cdot B_1=\cL\cdot B_2=\cL\cdot T=1$ and $\cL$ does not meet $D_1-\cL-B_1-B_2-T$. It follows that $B_1$ and $B_2$ meet $\cL$ in different points. 

Suppose that $B_j^2\geq 0$ for some $j\in \{1,2\}$. By Lemma \ref{lem:elements}(d), $B_j=\psi(C_1)$ and from the definition of $\alpha_j$ we have $\psi(C_1)^2=\tau_1-\alpha_1-s_1-3$. Contract $\bar Q_3+\bar Q_4$ and blow up $\psi(C_1)^2+2=\tau_1-\alpha_1-s_1-1\geq 2$ times over $\psi(C_1)$, each time on the intersection of the proper transforms of $\psi(C_1)$ and $E_1$. Denote the final proper transforms of $\psi(C_1)$, $E_1$, $T$ and $\cL$ by $C'$, $E'$, $T'$ and $\cL'$, respectively.  We have $(C')^2=-2$ and $C'\cdot E'=\alpha_1+s_1+1\leq \alpha_1+2$. Since $\alpha\in \{-1,0\}$, we have $\alpha_1\leq 1$. The linear system of $F=C'+2\cL+T'$ gives a $\P^1$-fibration such that $F\cdot E'=C'\cdot E'$, so by the Hurwitz formula $C'\cdot E'-1+\mu(q_3)-1+\mu(q_4)-1\leq 2 C'\cdot E'-2$, hence $C'\cdot E'\geq 3$. We obtain $\alpha_1=s_1=1$, hence $R$ consists of $(-2)$-curves, $\alpha_2=-1$ and $q_2$ is not semi-ordinary. Then $s_2=1$ and, by Lemma \ref{lem:basic_bounds}(d), $\tau_1\geq 3$. The contribution of $\Delta_1^+$ to $\ind_1^{(1)}+\ind_2^{(1)}$ is at least $\frac{1}{2}$, so we obtain $\ind_1^{(1)}+\ind_2^{(1)}\geq \frac{2}{3}+\frac{1}{2}+\frac{1}{2}=\frac{5}{3}$. This is a contradiction with Lemma \ref{lem:n=1_cases}. 

By \eqref{eq:KR4_2}, $B_j^2\in \{-1,-2,-3\}$ for $j\in \{1,2\}$. If $B_j^2=-2$ for some $j\in\{1,2\}$ then $B_j\cdot E_1=\psi_*^{-1}B_j\cdot E_0\leq 1$, so after the contraction of $\bar Q_3+\bar Q_4$ the linear system of the image of $F=B_j+2\cL+T$ gives a $\P^1$-fibration for which the image of $E_1$ is singular and meets a general fiber at most once, which is impossible. Thus $B_j^2\in \{-1,-3\}$ for $j\in \{1,2\}$. 

If $B_1^2=B_2^2=-1$ then $B_j=\psi(C_j)$ for $j=1,2$, hence $\psi$ does not touch $C_1+C_2$ and then $\psi_*^{-1}\cL$ is a component of $D_0-E_0$ meeting both $C_1$ and $C_2$, which is impossible. Since $K_1\cdot R\leq 1$, we obtain $\{B_1^2,B_2^2\}=\{-1,-3\}$
\end{proof}

Let $B_1$, $B_2$ and $T$ be the three components of $D_1-E_1-\cL$ meeting $\cL$. By Claim 11 we may assume that $B_1=\psi(C_1)$ is a $(-1)$-curve and $B_2$ is a $(-3)$-curve. We have $T\subseteq \Delta_1^+$. From the definition of $\alpha_j$ we have $\tau_1=2+s_1+\alpha_1\leq 4$, as $\alpha_1\leq 1$. Contract $\bar Q_3+\bar Q_4+\cL+T$ and blow up four times over $\psi(C_1)$, the first $\tau_1$ times on the intersections of the proper transforms of $\psi(C_1)$ and $E_1$. Blow up once more at some point of the proper transform of $\psi(C_1)$, off the proper transform of $B_2$. Denote the final proper transforms of $B_1, B_2$ and $E_1$ by $B_1', B_2'$ and $E_1'$, respectively. We have $(B_1')^2=-4$, $(B_2')^2=-1$ and $B_1'\cdot E'=0$. Since $\psi(C_1)$ is a $(-1)$-curve, $\psi$ does not touch $C_1$, so $C_1$ meets $\psi_*^{-1}\cL$. If $B_1'\cdot B_2'\neq 2$ then, by construction, $C_1$ meets $\psi_*^{-1}(B_2)$ too, $\psi_*^{-1}B_2\neq \tilde C_1$ and $A_1\cdot Q_1=2$. But in the latter case $\Delta_1^+\wedge \bar Q_1\neq 0$ and $s_1=1$ (as $C_1$ is non-branching in $Q_1$), so $\ind_1^{(1)}+\ind_2^{(1)}>1+\frac{1}{2}$, in contradiction to Lemma \ref{lem:n=1_cases}. Thus $B_1'\cdot B_2'=2$. By Lemma \ref{lem:ell_fib}(b) the last center may be chosen so that $|B_1'+2B_2'|$ induces an elliptic fibration. A general fiber meets $E'$ at most $2B_2'\cdot E'$ times. We have $B_2'\cdot E'=\psi_*^{-1}B_2\cdot E_0$ and the latter number is at most $1$, because $B_2\neq \psi(C_2)$. Since $E'$ has two cusps, it is neither a $1$-section nor it is contained in a fiber, hence it meets a general fiber twice. Because $E'$ meets $B_1'+B_2'$ in a unique point, Hurwitz formula gives $1+(\mu(q_3)-1)+(\mu(q_4)-1)\leq 2$; a contradiction.
\end{proof}

\clearpage

\section{Consequences of the almost minimality} \label{sec:n=0}

We now analyze various consequences of Theorem \ref{thm:almost_minimal}. The essential one is that the inequality \eqref{eq:lambda} results with an upper bound on the number of components of $D$ and hence on the degree of $\bar E$, see Remark \ref{rem:bounds}. We first describe the geometry of exceptional divisors of resolutions of cusps in terms of Hamburger-Noether pairs, which are a compact and geometrically meaningful way to keep track of multiplicity sequences. For a detailed treatment see \cite{Russell_HN_pairs} and \cite[Appendix]{KR-C*_actions_on_C3}. For a discussion of relations between standard Hamburger-Noether pairs, Puiseux pairs and other numerical characteristics of cusps see \cite[Appendix]{PaPe_cuspidal-Cstst-fibrations}. 

\subsection{Description in terms of Hamburger-Noether pairs}\label{sec :5A}

Recall that we say that a divisor $Q$ on a projective surface \emph{contracts to a smooth point} if it is equal to the reduced exceptional divisor of some birational morphism mapping $Q$ onto a smooth point of some surface. Equivalently, $Q$ has a negative definite intersection matrix and $d(Q)=1$, cf. \eqref{eq:d(T)}, see \cite[1.18.1]{Russell-formal_aspects_M-R}. 

\begin{dfn}[Hamburger-Noether pairs]\label{def:HN} Let $Q$ be a divisor which contracts to a smooth point and contains a unique $(-1)$-curve $L$. By induction with respect to the number of branching components of $Q$ we define a sequence of positive integers, called \emph{standard Hamburger-Noether (HN-) pairs} of $Q$
\begin{equation}\label{eq:HN_sequence}
\binom{c^{(1)}}{p^{(1)}}, \binom{c^{(2)}}{p^{(2)}}, \ldots,\binom{c^{(h)}}{p^{(h)}}
\end{equation}
as follows. Let $\sigma$ denote the contraction of the maximal twig of $Q$ containing $L$. Write $\Exc \sigma-L=C+P$, where $C, P$ are reduced connected and disjoint, and such that $d(C)\geq d(P)$ if $Q$ is a chain and that $P$ meets the proper transform of $\sigma(Q)$ otherwise. In the first case put $h=1$ and $\binom{c^{(1)}}{p^{(1)}}=\binom{d(C)}{d(P)}$. Otherwise let $\left(\binom{\bar c^{(k)}}{\bar p^{(k)}}\right)_{k=1}^{\tilde h}$ denote the sequence of standard HN-pairs of $\sigma(Q)$. Put $h=\tilde h+1$, $c=d(C)$ and define the sequence of standard HN-pairs of $Q$ as 
\begin{equation}\label{eq:composition}
\binom{c^{(h)}}{p^{(h)}}=\binom{c}{d(P)} \text{\ \ and\ \ } \binom{c^{(k)}}{p^{(k)}}=\binom{c\cdot \bar c^{(k)}}{c\cdot \bar p^{(k)}} \text{\ \ for } k\in \{1,\ldots,h-1\}.
\end{equation} 
\end{dfn}

\noindent Note that $Q$ has exactly $h-1$ branching components. 

\smallskip 
By $\lfloor x\rfloor$ and $\lceil x \rceil$ we denote respectively the biggest integer not greater than $x$ and the smallest integer not smaller than $x$. We have $c-\lfloor c/p\rfloor \cdot p=(c \bmod p)$. 

\begin{lem}[Properties of contractible chains]\label{lem:c,p}
Let $Q^+$ be a reduced chain with tips $Z_P\neq Z_C$ and such that $Q=Q^+-Z_P-Z_C$ contains a unique $(-1)$-curve $L$ and contracts to a smooth point. Write $Q-L=P+C$, where $P$, $C$ are reduced connected and disjoint and $C\cdot Z_P=P\cdot Z_C=0$; see Figure \ref{Fig:c,p}. The following hold:
\begin{enumerate}[(a)]
\item $c:=d(C)$ and $p:=d(P)$ are relatively prime, cf.\ \eqref{eq:d(T)},
\item Denote the tips of $P$ and $C$ meeting $Z_P$ and $Z_C$ by $L_P$ and $L_C$, respectively; put $L_P=0$ if $P=0$ and $L_C=0$ if $C=0$. Then
\begin{equation}\label{eq:HN_prop1}
d(P-L_P)=p-(c \bmod p) \text{\ \ \ and\ \ \ } d(C-L_C)=c-(p \bmod c).
\end{equation}
If $L_P\neq 0$ and, respectively, $L_C\neq 0$ then
\begin{equation}\label{eq:HN-tips}
-L_P^2=\lceil \frac{p}{p- (c \bmod p)}\rceil \text{\ \ \ and\ \ \ }-L_C^2=\lceil \frac{c}{c- (p \bmod c)}\rceil.
\end{equation}
\item Let $\chi$ be a smooth germ meeting $L$ normally, not in a node of $Q^+$, and let $\sigma$ be the contraction of $Q$. Then:
\begin{equation}\label{eq:HN_prop2}
\sigma(Z_P)\cdot \sigma(\chi)=c \text{\ \ \  and\ \ \ }\sigma(Z_C)\cdot \sigma(\chi)=p.
\end{equation}
\end{enumerate}
  \begin{figure}[h]\centering
  \includegraphics[scale=0.65]{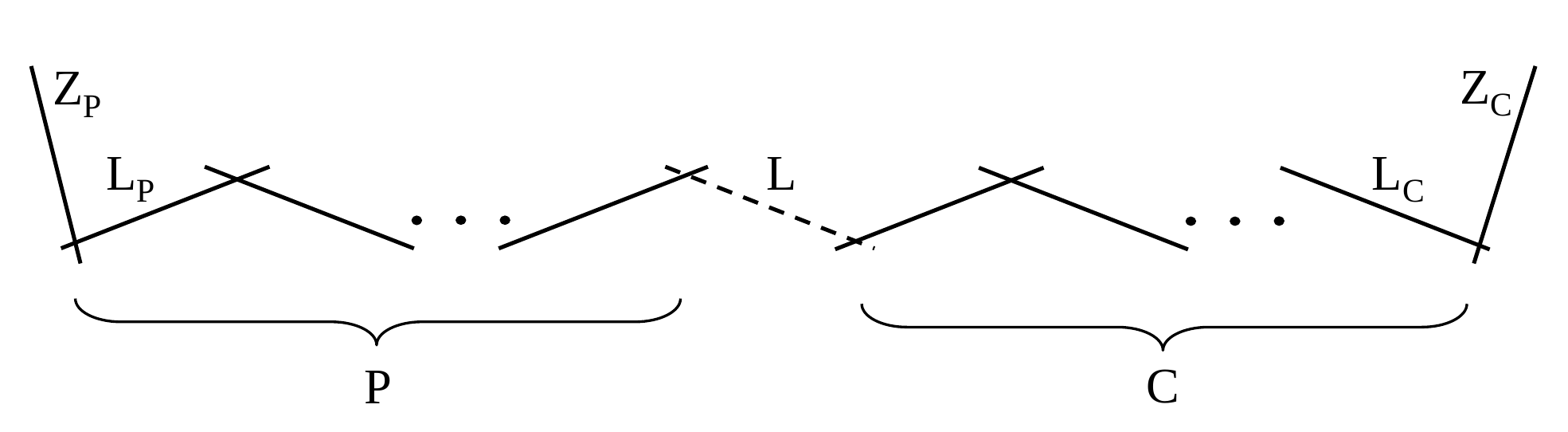}\caption{The chain $Z_P+Q+Z_C$ from Lemma \ref{lem:c,p}.}\label{Fig:c,p}
  \end{figure}
\end{lem}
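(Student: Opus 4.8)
The plan is to work with the chain $Q=P+L+C$, where $L=[1]$ is adjacent to the \emph{inner} tips $P_0\subseteq P$ and $C_0\subseteq C$ (the tips $L_P,L_C$ of the statement being the \emph{outer} tips, meeting $Z_P,Z_C$), and to extract everything from two determinant identities combined with the contractibility hypothesis $d(Q)=1$. Since $Q$ is negative definite and $L$ is its unique $(-1)$-curve, both $P$ and $C$ are admissible chains. First I would prove (a). Applying \eqref{eq:d(T1+T2)} to $Q=(P+L)+C$, which meet at $L\cap C_0$, gives $d(Q)=d(P+L)\,c-p\,d(C-C_0)$; applying it again to $P+L$ at $P_0\cap L$, and using $d(L)=1$, gives $d(P+L)=p-d(P-P_0)$. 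Writing $p^{+}:=d(P-P_0)$ and $c^{+}:=d(C-C_0)$, the hypothesis $d(Q)=1$ becomes
\begin{equation*}
1=(p-p^{+})\,c-p\,c^{+}.\tag{$\ast$}
\end{equation*}
The right-hand side is an integral combination of $p$ and $c$, so Bézout yields $\gcd(c,p)=1$, which is (a).

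Next I turn to (b), where the subtlety is that $(\ast)$ involves the inner-tip determinants $p^{+},c^{+}$, whereas \eqref{eq:HN_prop1} concerns the outer-tip determinants $p^{-}:=d(P-L_P)$ and $c^{-}:=d(C-L_C)$. The bridge is the standard transfer-matrix identity: writing $P=[e_1,\dots,e_a]$ and multiplying the matrices $\left(\begin{smallmatrix} e_i&-1\\1&0\end{smallmatrix}\right)$, each of determinant $1$, one reads off $p^{+}p^{-}\equiv 1\pmod p$ (and symmetrically $c^{+}c^{-}\equiv 1\pmod c$). Reducing $(\ast)$ modulo $p$ gives $p^{+}c\equiv-1$; multiplying by $p^{-}$ and using $p^{+}p^{-}\equiv1$ yields $p^{-}\equiv-c\pmod p$. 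Since $0<p^{-}<p$ for $P\neq0$ (monotonicity of $d$ on admissible chains), while $p^{-}=1=p-(c\bmod p)$ trivially when $P=0$, and since $\gcd(c,p)=1$, this forces $p^{-}=p-(c\bmod p)$; the companion formula for $c^{-}$ follows by exchanging $P$ and $C$ in $(\ast)$. Finally \eqref{eq:HN-tips} is immediate, since $p/p^{-}=[e_1,\dots,e_a]$ is the Hirzebruch--Jung continued fraction of $P$, whose first partial quotient satisfies $-L_P^2=e_1=\lceil p/p^{-}\rceil$ because $e_1-1<p/p^{-}\leq e_1$; I then substitute $p^{-}=p-(c\bmod p)$.

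For (c) I would use the projection formula. Write $\sigma^{*}\sigma(\chi)=\chi+\Theta$ with $\Theta=\sum_i m_i Q_i$ supported on the contracted chain $Q=Q_1+\dots+Q_n$, where $Q_k=L$. Since $\chi$ meets $Q$ only at $L$, transversally, the orthogonality $\sigma^{*}\sigma(\chi)\cdot Q_j=0$ translates into $M\vec m=\vec e_k$, where $M=-Q(Q)$ is the positive definite tridiagonal intersection matrix with $\det M=d(Q)=1$. Because $Z_P$ meets $Q$ only in $L_P$, the projection formula gives $\sigma(Z_P)\cdot\sigma(\chi)=Z_P\cdot\sigma^{*}\sigma(\chi)=m_{L_P}$, and the tridiagonal inverse formula evaluates this entry of $M^{-1}\vec e_k$ as $d(C)/d(Q)=c$; symmetrically $\sigma(Z_C)\cdot\sigma(\chi)=m_{L_C}=d(P)=p$, which is \eqref{eq:HN_prop2}.

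The main obstacle is precisely the inner/outer-tip mismatch in (b): contractibility $(\ast)$ naturally produces the inner determinants $p^{+},c^{+}$, and the argument hinges on the transfer-matrix congruence $p^{+}p^{-}\equiv1\pmod p$ to convert these into the outer determinants occurring in \eqref{eq:HN_prop1}. Once that congruence is in hand, parts (a) and (c) are short, and the remainder of (b) is bookkeeping with ranges and standard continued-fraction facts.
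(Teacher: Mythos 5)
Your proof is correct, but it takes a genuinely different route from the paper's. The paper proves (b) and (c) together by induction on $\#Q$: it writes $\sigma=\sigma_0\circ\sigma'$, peels off the outer tip $L_P$, normalizes $Z_P$ to be a $(-1)$-curve so that $\sigma(Z_P)$ becomes a $0$-curve, and then invokes an auxiliary observation (a chain with a unique $(-1)$-curve contracting to a $0$-curve has equal discriminants on both sides) together with \eqref{eq:d_recurrence} to get $p'=p-c$; the tip formula \eqref{eq:HN-tips} is then read off geometrically by counting how many times the blow-up center stays on the proper transform of $L_P$, using the already-established intersection numbers from (c). You instead work non-inductively: for (a) you use the same determinant identity $1=(p-p^{+})c-pc^{+}$; for (b) you convert the inner-tip discriminants $p^{+},c^{+}$ into the outer-tip ones via the reciprocity $p^{+}p^{-}\equiv 1\pmod p$ coming from the unit-determinant transfer matrices, reduce the contractibility identity mod $p$, and pin down $p^{-}$ by the range $0<p^{-}<p$; for (c) you solve $M\vec m=\vec e_k$ explicitly using the tridiagonal-inverse formula $(M^{-1})_{1,k}=d(C)/d(Q)$. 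All steps check out, including the edge cases $P=0$ or $C=0$ and the continued-fraction identification $-L_P^2=\lceil p/p^{-}\rceil$. What the paper's induction buys is economy of tools (only \eqref{eq:d(T1+T2)} and \eqref{eq:d_recurrence} are used, and (c) is obtained along the way and then feeds the tip formula); what your version buys is that each part is a closed-form, order-free computation, at the cost of importing two standard but unproved-in-the-paper facts (the transfer-matrix congruence and the tridiagonal inverse), which you should at least state precisely if this were to replace the paper's argument.
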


\begin{proof}
First consider a rational chain $Z$ with a unique $(-1)$-curve $L$ and such that $Z-L$ consists of two admissible chains $A$ and $B$. We claim that if $Z$ contracts to a $0$-curve then $d(A)=d(B)$. This was shown in \cite[3.7, 4.7]{Fujita-noncomplete_surfaces}, but it is worth recalling a simple proof. We order the components of $A$ and $B$ so that the first ones meet $L$. Using \eqref{eq:d_recurrence} we show by induction on $n$ that $\gcd(d(A),d'(A))=1$; similarly $\gcd(d(B),d'(B))=1$. Using \eqref{eq:d(T1+T2)} we note that if we replace a tree by its reduced total transform under a blowup then its discriminant does not change. Finally, using \eqref{eq:d(T1+T2)} twice gives $$0=d([0])=d(Z)=d(A)d(B)-d'(A)d(B)-d(A)d'(B),$$ hence $d(A)|d(B)$ and $d(B)|d(A)$, so indeed $d(A)=d(B)$.

(a) We choose an order on the set of irreducible components of $P$ in which $L_P$ is the last one; we do the same for $C$. As in the argument above, we have $$1=d([1])=d(Q)=pc-pd'(C)-cd'(P),$$ hence $\gcd(c,p)=1$.
 
(b),(c) We proceed by induction with respect to $\#Q$. If $Q=L$ then the statements hold, so assume $\#Q\geq 2$. Write $\sigma=\sigma_0\circ \sigma'$, where $\sigma_0$ is a single blowup. The chain $Q'=\Exc \sigma'\subseteq Q$ contracts to a smooth point, $L$ is its unique $(-1)$-curve and $Q-Q'$ is a tip of $Q$. Since the formulas to prove are symmetric, we may assume that $Q-Q'=L_P$. Let $L_P'$ be the tip of $Q'-L$ meeting $L_P$; put $L_P'=0$ if $P=L_P$. Put $p'=d(P-L_P)$. By induction 
\begin{equation*}
d(P-L_P-L_P')=p'-(c\bmod p'), \ \  d(C-L_C)=c-(p' \bmod c)
\end{equation*}
and
\begin{equation*}
\sigma'(L_P)\cdot \sigma(\chi)=c, \ \  \sigma'(Z_C)\cdot \sigma'(\chi)=p'.
\end{equation*}
Since the statements to prove do not depend on properties of $Z_P$ other than normality of the intersection with $L_P$, we may assume that $Z_P$ is a $(-1)$-curve. Then $\sigma(Z_P)$ is a $0$-curve, so $d(Z_P+P)=d(C)$ by the observation above. By \eqref{eq:d_recurrence} we obtain $1\cdot p-p'=c$, that is, $0<p'=p-c=p-(c\bmod p).$ Hence $d(C-L_C)=c-(p'\bmod c)=c-(p\bmod c)$, which proves \eqref{eq:HN_prop1}. Using the projection formula and the inductive assumption we obtain $$\sigma(Z_P)\cdot \sigma(\chi)=(\sigma'(Z_P)+\sigma'(L_P))\cdot \sigma'(\chi)=c$$ and $$\sigma(Z_C)\cdot \sigma(\chi)=(\sigma'(Z_C)+\sigma'(L_P))\cdot \sigma'(\chi)=c+p'=p,$$ which proves \eqref{eq:HN_prop2}.

From \eqref{eq:HN_prop2} we have $\sigma'(L_P)\cdot \sigma'(\chi)=c$ and $\sigma'(Z_C)\cdot \sigma'(\chi)=p-c$, so when blowing up over $\sigma'(L)$ the center stays on the proper transform of $L_P$ exactly $\lceil \frac{c}{p-c}\rceil$ times, which gives $$-L_P^2=1+\lceil \frac{c}{p-c}\rceil=\lceil \frac{p}{p-c}\rceil=\lceil \frac{p}{p-(c \bmod p)}\rceil.$$ If $L_C\neq 0$ then, by induction, $$-L_C^2=\lceil \frac{c}{c-((p-c)\bmod c)}\rceil=\lceil \frac{c}{c-(p\bmod c)}\rceil,$$ which proves \eqref{eq:HN-tips}.
\end{proof}

Assume that $Q$ is as in Definition \ref{def:HN}. Putting formally $c^{(h+1)}=1$ and using Lemma \ref{lem:c,p}(a) we note that:
\begin{align}
\begin{split}
\gcd(c^{(k)},p^{(k)})&= c^{(k+1)} \text{\ \ for } k\in\{1,\ldots,h\}, \\ 
c^{(k)}& \neq p^{(k)} \text{\ \ \ \ for } k\in\{2,\ldots,h\}, \\
c^{(1)} & > p^{(1)} \text{\ \ \ \ if\ } Q\neq L.
\end{split}
\end{align} 

\smallskip
\begin{rem}
[cf.\ Remark 2.10 \cite{PaPe_cuspidal-Cstst-fibrations}] 
Lemma \ref{lem:c,p}(c) shows that the above definition of HN-pairs for $Q$ is close to the one used in \cite[\S 3]{Palka-Coolidge_Nagata1}. The difference is that we no longer insist that $c^{(k)}\geq p^{(k)}$, so while we lose some flexibility, our sequences are usually shorter than the ones in loc.\ cit. One may easily pass from one sequence to another by replacing our pairs $\binom{c^{(k)}}{p^{(k)}}$ for which $c^{(k)}<p^{(k)}$ with $(\binom{c^{(k)}}{c^{(k)}})_q, \binom{c^{(k)}}{p^{(k)} -q c^{(k)}}$, where $q$ is the integer part of $p^{(k)}/c^{(k)}$. 
\end{rem}

For a given logical formula $\varphi$ without free parameters we denote by $[\varphi]\in \{0,1\}$ its logical value, that is, we put
\begin{equation}\label{eq:characteristic_function}
[\varphi]=1 \text{\ if\ } \varphi \text{\ is true and\ } [\varphi]=0 \text{\ otherwise}.
\end{equation}
We denote by $\Delta(Q)$ the sum of maximal $(-2)$-twigs of $Q$. We put 
\begin{align}\label{eq:def_s,lambda,M,I}
\begin{split}
s(Q)&=\Delta(Q)\cdot L\in \{0,1\},\\
\lambda(Q)&=\#Q-b_0(\Delta(Q))-[\#Q=3],\\
M(Q)& =c^{(1)}+\sum_{k=1}^{h}p^{(k)}-1,\\
I(Q)&=\sum_{k=1}^{h}c^{(k)}p^{(k)}.
\end{split}
\end{align}
Note that if $L$, the unique $(-1)$-curve of $Q$, is not a tip of $Q$ then $Q$ has $h+1$ maximal admissible twigs. We define $\ind(Q)$ as in Section \ref{sec:prelim}. 
Recall that given a nonzero reduced snc divisor on a smooth surface we say that a blow-up at a point $p$ of the divisor is \emph{inner} if $p$ is a node, otherwise it is \emph{outer}. By $r(Q)$ we denote the number of outer blow-ups used to create $Q$. We note that the first blow-up in the process of reconstruction of $Q$ is neither outer nor inner. 

Given a pair of positive integers $c$, $p$ we put 
\begin{equation}\label{eq:|(c,p)|}
|\binom{c}{p}|=[a_1,\ldots,a_m,1,b_1,\ldots,b_n],
\end{equation}
where $A=[a_1,\ldots,a_m]$ and $B=[b_1,\ldots,b_n]$ are unique admissible chains with $d(A)=p/\gcd(c,p)$ and $d(B)=c/\gcd(c,p)$ such that $[a_1,\ldots,a_m,1,b_1,\ldots,b_n]$ contracts to a smooth point. 
Note that in the Euclidean algorithm for $(c,p)$ we reach $0$ after exactly $\#|\binom{c}{p}|=m+n+1$ steps.

We put (cf.\ \eqref{eq:characteristic_function}) 
\begin{equation}
b^-(\binom{c}{p})=[p\geq 2 (c \bmod p)>0] \text{\ \ and\ \ } b^+(\binom{c}{p})=[c\geq 2(p \bmod c)>0],
\end{equation}
and we define 
\begin{equation}
\lambda^+(\binom{c}{p})=\#|\binom{c}{p}|-b^+(\binom{c}{p}).
\end{equation}

\begin{ex}
The sequences of standard HN-pairs $\binom{15}{6}\binom{3}{2}$ and $\binom{10}{4}\binom{2}{3}$ describe two forks, say, $Q_1'$ and $Q_2'$. Both divisors have a unique branching component $B$ and three maximal twigs $T_1$, $T_2$, $T_3$. The first pairs create a chain $[2,3,1,2]$ and the second pairs change this chain into a fork with a branching curve $B$ and three maximal twigs $T_1=[2,3]$, $T_2=[2]$ and $T_3$. In the first case we have $B^2=-3$ and $T_3=[3,1,2]$ and in the second $B^2=-2$ and $T_3=[2,1,3]$. We compute $M(Q_1')=22$, $I(Q_1')=96$ and $M(Q_2')=16$, $I(Q_2')=46$. The first fork requires $r(Q_1')=3$ outer blowups and the second requires $r(Q_2')=4$ outer blow-ups. In both cases for the first pair $b^+=1$ and for the second pair $b^+=0$ and $b^+=1$, respectively. In the first case the sum of $\lambda^+$ equals $6$ and $\lambda=5$. In the second case the sum of $\lambda^+$ equals $5$ and $\lambda=4$.
\end{ex}

To effectively make computations we prove formulas for various quantities in terms of HN-pairs. 

\begin{lem}\label{lem:ind_lambda_using_HN} Let $Q$ be a divisor which contracts to a smooth point and contains a unique $(-1)$-curve, which is not a tip of $Q$. 
Let $\left(\binom{c^{(k)}}{p^{(k)}}\right)_{k=1}^{h}$ be the standard HN-sequence of $Q$.  Then:
\begin{enumerate}[(a)]
\item $\displaystyle  \ind(Q)=1-\frac{c^{(1)} \bmod p^{(1)}}{p^{(1)}}+\sum_{k=1}^{h}(1-\frac{p^{(k)} \bmod c^{(k)}}{c^{(k)}})$,  
 
\item  $\displaystyle r(Q)=\lceil c^{(1)}/p^{(1)} \rceil-1+\sum_{k=2}^{h} \lceil p^{(k)}/c^{(k)}\rceil\geq h$,  

\item $\displaystyle b_0(\Delta(Q))=b^-(\binom{c^{(1)}}{p^{(1)}})+\sum_{k=1}^{h}b^+(\binom{c^{(k)}}{p^{(k)}})$,

\item  $\displaystyle  \lambda(Q)=-b^-(\binom{c^{(1)}}{p^{(1)}})+\sum_{k=1}^{h}\lambda^+(\binom{c^{(k)}}{p^{(k)}})-[\binom{c^{(1)}}{p^{(1)}}=\binom{3}{2}]$,

\item $s(Q)=[h>1 \text{\ and  } (p^{(h)} \bmod c^{(h)})=1]+[h=1 \text{\ and  } (c^{(1)} \bmod p^{(1)})=1]$.  
\end{enumerate}
\end{lem}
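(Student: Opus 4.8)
The plan is to prove all five formulas simultaneously by induction on $h$, the length of the standard HN-sequence (equivalently, on the number $h-1$ of branching components of $Q$), using the contraction $\sigma$ and the composition formula \eqref{eq:composition} from Definition \ref{def:HN}. For the base case $h=1$ the divisor $Q$ is a chain $C+L+P$ and every quantity is read off directly from Lemma \ref{lem:c,p}: the two maximal admissible twigs are $P$ and $C$, and \eqref{eq:HN_prop1} gives $\ind(P)=1-\frac{c^{(1)}\bmod p^{(1)}}{p^{(1)}}$ and $\ind(C)=1-\frac{p^{(1)}\bmod c^{(1)}}{c^{(1)}}$, which is (a); the tip self-intersections \eqref{eq:HN-tips} show that a tip is a $(-2)$-curve exactly when the corresponding $b^{\pm}$ equals $1$, giving (c), and for (e) one notes that when $h=1$ only the $P$-side can be an all-$(-2)$ twig, since $c^{(1)}>p^{(1)}$ forces $p^{(1)}\bmod c^{(1)}=p^{(1)}\ge 2$. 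Finally $\#Q=\#|\binom{c^{(1)}}{p^{(1)}}|$ together with the identity $\#Q=3\iff\binom{c^{(1)}}{p^{(1)}}=\binom{3}{2}$ (solving $(a-1)(b-1)=2$ for $Q=[a,1,b]$) yields (d).

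The engine of the inductive step is a geometric decomposition. Contracting the maximal twig $\Exc\sigma=C+L+P$ that carries the unique $(-1)$-curve collapses it onto the branching component $B$ it is attached to; since the remaining branches of $B$ are admissible, the image $B'$ is forced to be the unique $(-1)$-curve of $\sigma(Q)$, and every other maximal admissible twig of $Q$ survives unchanged as a maximal admissible twig of $\sigma(Q)$. Consequently the maximal admissible twigs of $Q$ are exactly those of $\sigma(Q)$ together with one new twig, the outer part $C$ of the contracted branch (the inner part $P$ becomes interior and contributes no twig). This gives the additive relations $\ind(Q)=\ind(\sigma(Q))+\ind(C)$, $b_0(\Delta(Q))=b_0(\Delta(\sigma(Q)))+b^+(\binom{c^{(h)}}{p^{(h)}})$ and $\#Q=\#\sigma(Q)+\#|\binom{c^{(h)}}{p^{(h)}}|$, where $\ind(C)=1-\frac{p^{(h)}\bmod c^{(h)}}{c^{(h)}}$ and the new twig $C$ has a $(-2)$-tip precisely when $b^+(\binom{c^{(h)}}{p^{(h)}})=1$, both by Lemma \ref{lem:c,p} applied to the branch with $c=d(C)=c^{(h)}$, $d(P)=p^{(h)}$.

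The formulas then close up by combining these additive relations with \eqref{eq:composition} and the \emph{scaling invariance} of the combinatorial ingredients: since $c^{(k)}=c^{(h)}\bar c^{(k)}$ and $p^{(k)}=c^{(h)}\bar p^{(k)}$ for $k<h$, each of $\frac{c\bmod p}{p}$, $\frac{p\bmod c}{c}$, $b^{\pm}(\binom{c}{p})$, $\#|\binom{c}{p}|$ and $\lceil c/p\rceil$ is unchanged when both entries are multiplied by $c^{(h)}$. Feeding the induction hypothesis for $\sigma(Q)$ through this invariance reproduces the $k\le h-1$ terms of (a) and (c) verbatim and adds the $k=h$ term, proving (a) and (c). For (d) I combine $\#Q=\sum_{k=1}^{h}\#|\binom{c^{(k)}}{p^{(k)}}|$ with (c) and the definition $\lambda(Q)=\#Q-b_0(\Delta(Q))-[\#Q=3]$, regrouping $\#|\binom{c^{(k)}}{p^{(k)}}|-b^+(\binom{c^{(k)}}{p^{(k)}})=\lambda^+(\binom{c^{(k)}}{p^{(k)}})$ and using $[\#Q=3]=[\binom{c^{(1)}}{p^{(1)}}=\binom{3}{2}]$ (for $h>1$ the divisor has at least four components, and a factor $c^{(h)}\ge 2$ rules out $c^{(1)}=3$). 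Statement (e) is not additive but local: the $(-1)$-curve $L$ of $Q$ lies on the last-created branch and meets a maximal $(-2)$-twig precisely when its outer neighbour $C$ is an all-$(-2)$ chain, i.e.\ when $\ind(C)=\frac{c-1}{c}$; by \eqref{eq:HN_prop1} and the uniqueness of the chain $[(2)_{c-1}]$ with that inductance, this happens iff $p^{(h)}\bmod c^{(h)}=1$ (for $h>1$) or $c^{(1)}\bmod p^{(1)}=1$ (for $h=1$), which is (e).

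The main obstacle is (b), the count of outer blow-ups, because it requires tracking the reconstruction of $Q$ by blow-ups rather than its contraction and distinguishing which blow-ups fall on nodes. The plan is again inductive: reconstructing $Q$ from $\sigma(Q)$ starts the new branch with one outer blow-up at a smooth point of $B'$ and continues with a sequence whose number of further outer blow-ups is dictated by the Hirzebruch--Jung continued fraction of $p^{(h)}/c^{(h)}$, giving $r(Q)=r(\sigma(Q))+\lceil p^{(h)}/c^{(h)}\rceil$; the innermost pair is responsible for the asymmetric term $\lceil c^{(1)}/p^{(1)}\rceil-1$, the $-1$ reflecting that the first blow-up of the whole process is neither inner nor outer. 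Scaling invariance of $\lceil\cdot\rceil$ again matches the $k<h$ terms, and the bound $r(Q)\ge h$ follows since $c^{(1)}>p^{(1)}$ makes the first term at least $1$ while each of the remaining $h-1$ ceilings is at least $1$. Making the identification of outer versus inner blow-ups precise in terms of the pairs, via Lemma \ref{lem:c,p}(c) and the multiplicity sequence of the resolved germ, is the delicate point I expect to spend the most effort on.
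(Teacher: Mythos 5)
Your proposal is correct and takes essentially the same route as the paper: the paper's (very terse) proof also reduces everything to Lemma \ref{lem:c,p} applied to the branch $C+L+P$ carrying the $(-1)$-curve, with the recursive structure of Definition \ref{def:HN} (contraction of that branch onto the branching component, which becomes the new $(-1)$-curve) and the scaling by $c^{(h)}$ in \eqref{eq:composition} doing exactly the work your induction on $h$ makes explicit. Your identification of the delicate point in (b) matches what the paper handles via Lemma \ref{lem:c,p}(c) (the contact numbers $c$ and $p$ with $\sigma(Z_P)$ and $\sigma(Z_C)$ determine which blow-ups sit on a node), and your spelled-out treatment of the edge cases in (d) and (e) agrees with the paper's.
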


\begin{proof} Let $L$ denote the unique $(-1)$-curve of $Q$. Let $Q^{(h)}$ be the maximal twig of $Q$ containing $L$ and let $\sigma$ denote its contraction. Put $\binom{c}{p}=\binom{c^{(h)}}{p^{(h)}}$.  We have $Q^{(h)}-L=P+C$, where the divisors $P,C$ are reduced connected and disjoint, $d(P)=p$ and $d(C)=c$. Since $L$ is not a tip of $Q$, we have $C\neq 0$ (hence $c>1$) and $p^{(1)}>1$. By Lemma \ref{lem:c,p}(b)
\begin{equation*}
d(P-L_P)=p-(c \bmod p) \text{\ \ \ and\ \ \ } d(C-L_C)=c-(p \bmod c),
\end{equation*}
so (a) follows from the definition of the inductance. For (b) note that by definition the first blow-up within the HN-pair $\binom{c^{(k)}}{p^{(k)}}$ is outer if and only if $k\geq 2$. Then the formula follows from Lemma \ref{lem:c,p}(c). By Lemma \ref{lem:c,p}(b), $L_P=[2]$ if and only if $p\geq 2(c \bmod p)>0$, and $L_C=[2]$ if and only if $c\geq 2(p \bmod c)>0$, which gives (c). 

Part (d) follows from (c) and from the definition of $\lambda(Q)$. For (e) we observe that if $h>1$ then by \eqref{eq:HN_prop1}, $C$ is a $(-2)$-chain if and only if $p^{(h)} \bmod c^{(h)}=1$. Similarly, if $h=1$ then $L$ meets a $(-2)$-twig if and only if $c^{(1)} \bmod p^{(1)}=1$ (the twig is necessarily $P$).  
\end{proof}
\smallskip

We define the following function on the set of positive integers:
\begin{equation}
    \nu(\lambda)= \begin{cases}
               5/6                   & \lambda=1,\\
              11/10                & \lambda=2,\\
              11/12                & \lambda=3,\\
              \frac{1}{\lambda-1}+\frac{\lambda-2}{2\lambda-3}		     & \lambda\geq 4.\\
           \end{cases}
\end{equation}
\noindent We note the following values: $\nu(4)=\frac{11}{15}$, $\nu(5)=\frac{19}{28}$, $\nu(6)=\frac{29}{45}$ and $\nu(7)=\frac{41}{66}$.

\begin{lem}[Lower bound on inductance] \label{lem:lambda_bounds} Let $Q$ be a divisor which contracts to a smooth point and contains a unique $(-1)$-curve, which is not a tip of $Q$. Then:
\begin{enumerate}[(a)]
\item $\min\{\ind(Q): Q \text{\ such that\ }\lambda(Q)=\lambda\}=\nu(\lambda)$ and the minimum for every $\lambda\geq 4$ (respectively for $\lambda=3$) is attained only at the chain $[\lambda-1,1,(2)_{\lambda-3},3]$ (respectively $[4,1,2,2]$). 
\item If $\ind(Q)\leq \nu(\lambda(Q)-1)$ then $Q$ is a chain.
\item If $\lambda(Q)\leq 7$ and $\ind(Q)\leq \frac{2}{3}$ then:
\begin{enumerate}[(i)]
\item $Q=[5,1,2,2,2,3]$, $(\lambda(Q),\ind(Q))=(6,\frac{2}{3}-\frac{1}{45})$ or 
\item $Q=[6,1,2,2,2,2,3]$, $(\lambda(Q),\ind(Q))=(7,\frac{2}{3}-\frac{1}{22})$ or 
\item $Q=[5,2,1,3,2,2,3]$, $(\lambda(Q),\ind(Q))=(7,\frac{2}{3}-\frac{1}{144})$.
\end{enumerate} 
\end{enumerate}
\end{lem}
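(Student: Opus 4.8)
The plan is to convert the statement into a finite combinatorial optimization governed by the Hamburger--Noether description and the explicit formulas of Lemma \ref{lem:ind_lambda_using_HN}. Writing the standard HN-sequence of $Q$ as $\left(\binom{c^{(k)}}{p^{(k)}}\right)_{k=1}^{h}$, both invariants split over the pairs: by Lemma \ref{lem:ind_lambda_using_HN}(a) the inductance is the sum of the inductances of the $h+1$ maximal admissible twigs of $Q$, and by Lemma \ref{lem:ind_lambda_using_HN}(d) $\lambda(Q)$ is an additive expression in the $\lambda^+\binom{c^{(k)}}{p^{(k)}}$, corrected by $b^-\binom{c^{(1)}}{p^{(1)}}$ and by $[\binom{c^{(1)}}{p^{(1)}}=\binom{3}{2}]$. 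I would first settle the extremal problem for \emph{chains} ($h=1$), then show that every genuine fork ($h\geq 2$) has strictly larger inductance than the best chain of the same $\lambda$; this simultaneously yields (a) and its contrapositive (b).

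For the chain case a single pair $\binom{c}{p}$ with $c>p\geq 1$ and $\gcd(c,p)=1$ gives, via Lemma \ref{lem:ind_lambda_using_HN}(a), $\ind(Q)=\ind(A)+\ind(B)$ with $\ind(A)=1-\tfrac{c\bmod p}{p}$ and $\ind(B)=1-\tfrac{p}{c}$, the two twigs being coupled through the coprimality of $d(A)=p$ and $d(B)=c$ forced by contractibility to a smooth point (Lemma \ref{lem:c,p}). The optimization is then controlled by the monotonicity of Lemma \ref{lem:ind}: shortening a twig at its tip strictly lowers its inductance, while lengthening a twig by inner $(-2)$-curves raises $\lambda$ by one and keeps the twig inductance below $\tfrac12$. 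Concretely, the cheapest way to spend a prescribed $\lambda$-budget is to pour the bulk into one twig of shape $[3,(2)_{\lambda-3}]$, whose inductance $\tfrac{\lambda-2}{2\lambda-3}$ stays below $\tfrac12$ for every length, and to leave the other twig a single deep curve $[\lambda-1]$ of inductance $\tfrac{1}{\lambda-1}$, the sum being exactly $\nu(\lambda)$. I would make this precise by induction on $\#Q$ via the recurrences $d(R)=a_1d'(R)-d''(R)$ and $\ind([a_1,\dots,a_k])^{-1}=a_1-\ind([a_2,\dots,a_k])$, tracking how each Euclidean step changes the pair $(\ind,\lambda)$. The value $\lambda=3$ is treated apart, because the generic extremal shape $[\lambda-1,1,(2)_{\lambda-3},3]$ degenerates to $[2,1,3]$, which has $\lambda=1$; here the minimizer is instead $[4,1,2,2]$, and the values $\nu(1),\nu(2)$ are read off from $[2,1,3]$ and $[2,3,1,2]$ as base cases.

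For forks I would show that each branching pair contributes an extra maximal admissible twig together with a factor $c^{(k)}\neq p^{(k)}$, so that its inductance contribution exceeds $\tfrac12$ while its net contribution to $\lambda$ is too small to compensate; carrying the bookkeeping through Lemma \ref{lem:ind_lambda_using_HN}(a),(d) gives $\ind(Q)>\nu(\lambda(Q))$ strictly, and in fact $\ind(Q)>\nu(\lambda(Q)-1)$, which is exactly (b) and the uniqueness clause of (a). Then (c) follows by combining (a) with the facts that $\nu$ is strictly decreasing for $\lambda\geq 4$ and that $\nu(5)>\tfrac23>\nu(6)$: the hypothesis $\ind(Q)\leq\tfrac23$ forces $\nu(\lambda(Q))\leq\tfrac23$, hence $\lambda(Q)\in\{6,7\}$. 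For $\lambda=6$ one has $\tfrac23<\nu(5)=\nu(\lambda-1)$, so (b) makes $Q$ a chain and a short enumeration of chains with $\lambda=6$ and $\ind\in[\tfrac{29}{45},\tfrac23]$ isolates $[5,1,2,2,2,3]$. For $\lambda=7$ the inequality $\tfrac23>\nu(6)$ prevents a direct appeal to (b), so I would separately bound the inductance of every $\lambda=7$ fork strictly below by more than $\tfrac23$ and then list the chains with $\lambda=7$ and $\ind\leq\tfrac23$, obtaining $[6,1,2,2,2,2,3]$ and $[5,2,1,3,2,2,3]$.

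The main obstacle is the chain optimization together with the sharp fork exclusion: the whole difficulty lies in the coupling imposed by contractibility (coprimality of $d(A)$ and $d(B)$, and the analogous constraints across branchings), which forbids treating the twigs independently and forces the Euclidean-algorithm bookkeeping. The $\lambda=7$ case of (c) is the most delicate point, since there (b) is just out of reach and one must exclude near-minimal forks by an explicit finite estimate rather than by the clean inequality available for $\lambda\leq 6$.
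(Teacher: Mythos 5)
Your plan for the chain case and for part (c) follows essentially the same route as the paper: the extremal chain is found by tracking how each step of the Euclidean algorithm (equivalently, each blow-up in the reconstruction of $Q$) trades inductance against $\lambda$, and for (c) you correctly isolate the $\lambda=7$ fork case as the one where (b) is out of reach and a separate finite estimate is needed (the paper's estimate: such a fork would need a tip $T$ with $T^2\leq -45$, incompatible with $\#Q\leq 2\lambda+1\leq 15$). The genuine gap is in your fork exclusion, which is the load-bearing step for (b) and for the uniqueness clause of (a). The claim that each branching pair has ``inductance contribution exceed[ing] $\tfrac12$'' is false. By Lemma \ref{lem:ind_lambda_using_HN}(a) the $k$-th pair contributes $1-\frac{p^{(k)}\bmod c^{(k)}}{c^{(k)}}$, and nothing forces this to be at least $\tfrac12$ when $p^{(k)}\bmod c^{(k)}$ is close to $c^{(k)}$: the extra maximal twig created by the branching can be a single $(-m)$-curve contributing only $\tfrac1m$. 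A concrete example from the paper's own list (Lemma \ref{lem:divisor_list_lambda}(f)(ii)) is the fork $\binom{12}{8}\binom{4}{3}$: its second pair contributes $1-\tfrac{3\bmod 4}{4}=\tfrac14$, the new twig being the tip $[4]$; even the frequent trailing pairs $\binom{2}{2m+1}$ contribute exactly $\tfrac12$, not more. In such examples the inequality $\ind(Q)>\nu(\lambda(Q)-1)$ still holds, but only because the \emph{earlier} pairs are then forced to be expensive — so no purely local, per-pair accounting of the type you describe can close the argument.

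What replaces it in the paper is a global induction on $\lambda$ in which (a) and (b) are proved simultaneously: given a branched $Q$ with $\ind(Q)\leq\nu(\lambda_0-1)$, one contracts $(-1)$-curves until the branching disappears, obtaining $Q'$ with $\lambda(Q')\leq\lambda_0-1$ and $\ind(Q')<\ind(Q)$, and applies the already-established minimum $\nu(\lambda(Q'))$ to $Q'$. Since $\nu(\lambda')>\nu(\lambda_0-1)$ for every $\lambda'\leq\lambda_0-1$ \emph{except} $\lambda'=1$, this pins $Q'$ down to $[3,1,2]$ and leaves a two-case finite check. Note that the failure of monotonicity of $\nu$ at small arguments ($\nu(2)=\tfrac{11}{10}>\nu(1)=\tfrac56$, and $\nu(1)<\nu(3)$) is precisely why the base $Q'=[3,1,2]$ survives and why your additive bookkeeping cannot be uniform over the pairs. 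If you restructure your fork step as this contraction-plus-induction (peeling off the last HN-pair and invoking the chain minimum for the peeled divisor), the rest of your outline — including the enumeration of chains with $\lambda\in\{6,7\}$ and $\ind\leq\tfrac23$ for part (c) — goes through as you describe.
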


\begin{proof} By Proposition \ref{prop:lambda=3} and Lemma \ref{lem:basic_bounds}(e) the above statements hold if $\lambda<4$. For $Q=[\lambda-1,1,(2)_{\lambda-3},3]$ we have $\lambda(Q)=\lambda$ and $\ind(Q)=\nu(\lambda)$, so $$\min\{\ind(Q): \lambda(Q)=\lambda\}\leq \nu(\lambda).$$ To prove the opposite inequality w argue by induction with respect to $\lambda$. Let $\lambda_0\geq 4$ and assume (a) and (b) hold for every $Q$ with $\lambda(Q)\leq \lambda_0-1$. By induction and by the definition of $\nu(\lambda)$, $\min\{\ind(Q): \lambda(Q)=\lambda\}$ is a decreasing function of $\lambda$ for $2\leq \lambda\leq \lambda_0$ and its value for $\lambda=4$ is smaller than for $\lambda=1$.

To prove (b) suppose that there exists a branched divisor $Q$ with $\lambda(Q)=\lambda_0$ and $\ind(Q)\leq \nu(\lambda_0-1)$. Contract $(-1)$-curves in $Q$ and its images until there is no branching component and denote the image of $Q$ by $Q'$. We have $\lambda(Q')\leq \lambda_0-1$, so by the inductive assumption $\nu(\lambda(Q'))\leq\ind(Q')<\nu(\lambda_0-1)$, hence $\lambda(Q')=1$, which gives $Q'=[3,1,2]$. Now $\frac{5}{6}<\nu(\lambda_0-1)$, so since $\lambda_0\geq 4$, we get $\lambda_0=4$ and $\nu(\lambda_0-1)=\frac{11}{12}$. Since $\lambda(Q)=4$, the contracted divisor is a twig, either $[2,2,1]$ or $[2,1,3]$. In both cases the inequality $\ind(Q)\leq \frac{11}{12}$ fails; a contradiction. 

To prove (a) suppose that $Q$ is a divisor with $\lambda(Q)=\lambda_0$ and $\ind(Q)\leq \nu(\lambda_0)$. Then $\ind(Q)\leq \nu(\lambda_0-1)$, so part (b) implies that $Q$ is a chain. We will repeatedly use Lemma \ref{lem:ind} to bound $\ind(Q)$ from below. For instance, since $\nu(\lambda_0)<1$, we have $b_0(\Delta(Q))\leq 1$. Let $U_i$ for $i\in\{1,\ldots,\#Q\}$ denote the proper transform of the exceptional curve of the $i$'th blow-up in the sequence of blow-ups reversing the contraction of $Q$ to a smooth point. We treat $Q$ as the exceptional divisor of a resolution of a cusp. After the first blow-up the proper transform of the germ meets $U_1$. It separates from $U_1$ after $x_1$ further blow-ups, $x_1\geq 1$. 

Suppose that $x_1=1$. Then $b_0(\Delta(Q))=1$ and $\#Q=\lambda_0+1$. After $x_2\geq 1$ more blowups the germ separates from $U_2$ and the created chain is $[2,x_2+1,1,(2)_{x_2-1}]$. Since $b_0(\Delta(Q))=1$, we have $x_2\leq 2$. If $x_2=1$ then $Q$ contains the twig $[2,2]$, so $\nu(5)<\frac{2}{3}<\ind(Q)\leq \nu(\lambda_0)$ and hence $\lambda_0=4$ and $\#Q=\lambda_0+b_0(\Delta(Q))=5$, in which case $Q=[2,2,3,1,2]$, in contradiction to the equality $b_0(\Delta(Q))=1$. Thus $x_2=2$ and hence $Q$ contains a twig $[2,3]$. There are $\#Q-4=\lambda_0-3$ blow-ups remaining to be done, so the self-intersection of the tip of $Q$ other than $U_1$ is not smaller than $-2-(\lambda_0-3)=1-\lambda_0$. Then $\frac{3}{5}+\frac{1}{\lambda_0-1}\leq \ind(Q)\leq \nu(\lambda_0)$, which is impossible; a contradiction.

Thus $x_1\geq 2$. After the first $x_1+1$ blow-ups the created chain is $[(2)_{x_1-1},1,1+x_1]$. Suppose that $x_1\geq 3$. Then $x_1\leq \#Q-1= \lambda_0$ and $Q$ contains the twigs $[(2)_{x_1-2}]$ and $[1+x_1]$, hence $\frac{x_1-2}{x_1-1}+\frac{1}{x_1+1}\leq \ind(Q)\leq \nu(\lambda_0)\leq\nu(x_1)$, which gives $x_1=3$ and $\nu(\lambda_0)\geq \frac{3}{4}$; a contradiction. 

Thus $x_1=2$. After the first three blow-ups the created chain is $[2,1,3]=U_2+U_3+U_1$. Since $\#Q>3$, now the germ meets $U_2$ and $U_3$, so $\Delta(Q)=0$ and hence $\#Q=\lambda_0$. After $x_2\geq 1$ additional blow-ups the germ separates from $U_2$ and the chain becomes $[2+x_2,1,(2)_{x_2},3]$. We see that $Q$ contains the twigs $[2+x_2]$ and $[3,(2)_{x_2-1}]$. Clearly, $x_2+3\leq \#Q=\lambda_0$. Consider the case $x_2+4\leq \lambda_0$. Then 
$\frac{1}{2+x_2}+\frac{x_2}{2x_2+1}\leq \ind(Q)\leq \nu(\lambda_0)\leq \nu(x_2+4)$, which gives $x_2=1$ and $\lambda_0=5$. Then $Q=[3,2,1,3,3]$, so $\ind(Q)=\frac{2}{5}+\frac{3}{8}\leq \nu(5)$; a contradiction. We obtain $x_2+3=\lambda_0=\#Q$. Then $Q=[\lambda_0-1,1,(2)_{\lambda_0-3},3]$, which gives (a).
 
(c) Since $\ind(Q)\leq \frac{2}{3}$, we have $\lambda_0:=\lambda(Q)\in \{6,7\}$ by (a). Suppose that $Q$ is branched. We have $\ind(Q)\leq \frac{2}{3}\leq \nu(6-1)$, so by part (b), $\lambda_0=7$. Let $Q'$ be as in the proof of (b). Then $\ind(Q')\geq \nu(6)=\frac{2}{3}-\frac{1}{45}$, so if $T$ is a tip of $Q$ not contained in the proper transform of $Q'$ then $\frac{1}{-T^2}\leq \ind(Q)-\ind(Q')\leq \frac{1}{45}$, i.e.\ $T^2\leq -45$. But creating such a component requires at least $45$ blow-ups and, since we easily prove by induction that $b_0(\Delta(Q))\leq \lambda(Q)+1$, we have $\#Q\leq 2\lambda_0+1\leq 15$; a contradiction. It follows that $Q$ is a chain. Using the assumption $\ind(Q)\leq \frac{2}{3}$, which for $\lambda_0=6,7$ is only slightly weaker than $\ind(Q)\leq \nu(\lambda_0)$, we argue as in the proof of (a) that $x_1=2$ and that either $x_2=\lambda_0-3$ and $\lambda_0\in \{6,7\}$ or $x_2=\lambda_0-4$ and $\lambda_0=7$. In the first case we get  (i) and (ii) above and in the latter case we get (iii).
\end{proof}

For $Q$ as in Lemma \ref{lem:lambda_bounds} we denote by $\Delta^-(Q)$ the sum of those maximal $(-2)$-twigs of $Q$ which are disjoint from the unique $(-1)$-curve of $Q$.

\begin{lem}\label{lem:divisor_list_lambda} Let $Q$ be as in Lemma \ref{lem:lambda_bounds} and let $\lambda:=\lambda(Q)\leq 7$. In case $\lambda=6$ assume that $\ind(Q)\leq \frac{3}{2}$ and in case $\lambda=7$ that $\Delta^-(Q)=0$. Then the sequence of (standard) HN-pairs of $Q$ belongs to the following list (where $h$ denotes the length of the sequence):
\begin{enumerate}[(a)]
\item $\lambda=1$ : $\binom{3}{2}$,
\item $\lambda=2$ : $\binom{5}{2}$,
\item $\lambda=3$ : 
\begin{enumerate}[(i)]
\item[$h=1$]: $\binom{4}{3}$,  $\binom{7}{2}$, $\binom{7}{3}$,
\item[$h=2$]: $\binom{6}{4}\binom{2}{1}$, $\binom{10}{4}\binom{2}{1}$,
\end{enumerate} 
\item $\lambda=4$ :
\begin{enumerate}[(i)]
\item[$h=1$]: $\binom{5}{3}$, $\binom{5}{4}$, $\binom{7}{5}$, $\binom{8}{3}$, $\binom{9}{2}$, $\binom{9}{4}$, $\binom{10}{3}$, $\binom{12}{5}$,
\item[$h=2$]: $\binom{6}{4}\binom{2}{3}$, $\binom{10}{4}\binom{2}{3}$, $\binom{9}{6}\binom{3}{1}$, $\binom{15}{6}\binom{3}{1}$, $\binom{8}{6}\binom{2}{1}$,  $\binom{14}{4}\binom{2}{1}$, $\binom{14}{6}\binom{2}{1}$
\item[$h=3$]: $\binom{12}{8}\binom{4}{2}\binom{2}{1}$, $\binom{20}{8}\binom{4}{2}\binom{2}{1}$.
\end{enumerate} 
\item $\lambda=5$ :
\begin{enumerate}[(i)]
\item[$h=1$]: $\binom{7}{4}$, $\binom{8}{5}$, $ \binom{6}{5}$, $ \binom{9}{7}$, $ \binom{10}{7}$, $ \binom{11}{3}$, $ \binom{11}{4}$, $ \binom{11}{8}$, $ \binom{13}{5}$, $ \binom{11}{2}$, $ \binom{11}{5}$, $ \binom{13}{3}$, $ \binom{13}{4}$, $ \binom{16}{7}$, $ \binom{17}{5}$,~$ \binom{17}{7}$,~$\binom{19}{8}$,

\item[$h=2$]: $\binom{6}{4}\binom{2}{5},$ $ \binom{10}{4} \binom{2}{5},$ $ \binom{9}{6} \binom{3}{2},$ $ \binom{15}{6} \binom{3}{2},$ $ \binom{9}{6} \binom{3}{4},$ $ \binom{15}{6} \binom{3}{4},$ $ \binom{12}{8}  \binom{4}{1},$ $ \binom{20}{8} \binom{4}{1},$ $ \binom{15}{10}  \binom{5}{2},$ $ \binom{25}{10} \binom{5}{2},$ $ \binom{8}{6}  \binom{2}{3},$ 
$\binom{14}{4} \binom{2}{3},$ $ \binom{14}{6}  \binom{2}{3},$ $ \binom{12}{9} \binom{3}{1},$ $ \binom{21}{6}  \binom{3}{1},$ $ \binom{21}{9} \binom{3}{1},$ $\binom{10}{6}  \binom{2}{1},$ $ \binom{10}{8} \binom{2}{1},$ $ \binom{14}{10}  \binom{2}{1},$ $ \binom{16}{6} \binom{2}{1},$ $ \binom{18}{4}  \binom{2}{1},$ $ \binom{18}{8} \binom{2}{1},$ $\binom{20}{6}  \binom{2}{1},$ $ \binom{24}{10} \binom{2}{1}$,

\item[$h=3$]: $\binom{12}{8} \binom{4}{2} \binom{2}{3},$ $ \binom{20}{8} \binom{4}{2} \binom{2}{3},$ $ \binom{18}{12} \binom{6}{3} \binom{3}{1},$ $ \binom{30}{12} \binom{6}{3} \binom{3}{1},$ $\binom{12}{8} \binom{4}{6}  \binom{2}{1},$ $\binom{20}{8} \binom{4}{6} \binom{2}{1},$ $\binom{18}{12} \binom{6}{2}  \binom{2}{1},$ $ \binom{30}{12} \binom{6}{2}  \binom{2}{1},$ $ \binom{16}{12} \binom{4}{2}  \binom{2}{1},$ $ \binom{28}{8} \binom{4}{2}  \binom{2}{1},$ $ \binom{28}{12} \binom{4}{2} \binom{2}{1}$,

\item[$h=4$]: $\binom{24}{16} \binom{8}{4} \binom{4}{2}  \binom{2}{1},$ $ \binom{40}{16}\binom{8}{4} \binom{4}{2} \binom{2}{1}$,
 \end{enumerate}
\item $\lambda=6$, $\ind(Q)\leq \frac{3}{2}$:
\begin{enumerate}[(i)]
\item[$h=1$]: $\binom{9}{5},$ $\binom{11}{7},$ $\binom{12}{7},$ $\binom{13}{8},$ $\binom{7}{6},$ $\binom{11}{9},$ $\binom{13}{9},$ $\binom{13}{10},$ $\binom{14}{3},$ $\binom{14}{5},$ $\binom{14}{11},$ $\binom{15}{4},$ $\binom{15}{11},$ $\binom{17}{12},$ $\binom{18}{5},$ $\binom{18}{7},$ $\binom{18}{13},$ $\binom{19}{7},$ $\binom{21}{8},$ $\binom{13}{2},$ $\binom{13}{6},$ $\binom{16}{3},$ $\binom{16}{5},$ $\binom{20}{9},$ $\binom{22}{5},$ $\binom{22}{9},$ $\binom{23}{7},$ $\binom{23}{10},$ $\binom{24}{7},$ $\binom{25}{11},$ $\binom{26}{11},$ $\binom{27}{8},$ $\binom{29}{12},$ $\binom{31}{13}, $

\item[$h=2$]: $\binom{6}{4}\binom{2}{7},$ $\binom{9}{6} \binom{3}{5},$ $\binom{15}{6} \binom{3}{5},$ $\binom{9}{6} \binom{3}{7},$ $\binom{12}{8} \binom{4}{3},$ $\binom{20}{8} \binom{4}{3},$ $\binom{15}{10} \binom{5}{3},$ $\binom{25}{10} \binom{5}{3},$ $\binom{15}{10} \binom{5}{7},$ $\binom{21}{14} \binom{7}{3},$ $\binom{24}{16} \binom{8}{3},$ $\binom{8}{6} \binom{2}{5},$ $\binom{12}{9} \binom{3}{2},$ $\binom{10}{6} \binom{2}{3},$ $\binom{10}{8} \binom{2}{3},$ $\binom{14}{10} \binom{2}{3},$ $\binom{16}{6} \binom{2}{3},$ $\binom{15}{9} \binom{3}{1},$ $\binom{14}{8} \binom{2}{1},$ $\binom{16}{10} \binom{2}{1},$ $\binom{12}{10} \binom{2}{1},$ $\binom{18}{14} \binom{2}{1},$ $\binom{20}{14} \binom{2}{1},$ $\binom{22}{8} \binom{2}{1},$ $\binom{22}{16} \binom{2}{1}$,
\end{enumerate}

\item $\lambda=7$, $\Delta^-(Q)=0$: 
\begin{enumerate}[(i)]
\item[$h=1$]: $\binom{11}{6},$ $\binom{14}{9},$ $\binom{16}{9},$ $\binom{17}{10},$ $\binom{18}{11},$ $\binom{19}{11},$ $\binom{19}{12},$ $\binom{21}{13},$ $\binom{8}{7},$ $\binom{15}{2},$ 

\item[$h=2$]: $\binom{10}{6} \binom{2}{5},$ $\binom{15}{9} \binom{3}{2},$ $\binom{15}{9}\binom{3}{4},$ $\binom{20}{12}\binom{4}{1},$ $\binom{14}{8}\binom{2}{3},$ $\binom{16}{10} \binom{2}{3},$ $\binom{21}{12}\binom{3}{1},$ $\binom{24}{15} \binom{3}{1},$ $\binom{18}{10}\binom{2}{1},$ $\binom{22}{14} \binom{2}{1},$ $\binom{24}{14}\binom{2}{1},$ $\binom{26}{16} \binom{2}{1}$.
\end{enumerate}
\end{enumerate}
\end{lem}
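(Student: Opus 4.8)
The plan is to translate the classification entirely into a finite search over sequences of standard HN-pairs, driven by the formulas of Lemma~\ref{lem:ind_lambda_using_HN}. First I would record the numerical data that a genuine HN-sequence $\left(\binom{c^{(k)}}{p^{(k)}}\right)_{k=1}^{h}$ of such a $Q$ must satisfy: by Definition~\ref{def:HN} and \eqref{eq:composition} one has $\gcd(c^{(k)},p^{(k)})=c^{(k+1)}$ with $c^{(h+1)}=1$, the top pair is coprime, $c^{(1)}>p^{(1)}$, and $\binom{c^{(k)}}{p^{(k)}}=\binom{c^{(h)}\bar c^{(k)}}{c^{(h)}\bar p^{(k)}}$ for $k<h$, where $\binom{\bar c^{(k)}}{\bar p^{(k)}}$ is the HN-sequence of the contraction $\sigma(Q)$; the hypothesis that the unique $(-1)$-curve is not a tip forces in addition $c^{(h)}\geq 2$ and $p^{(1)}\geq 2$. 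Thus each sequence is built from a coprime top pair together with a lower sequence scaled by the integer $c^{(h)}$.

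Next I would use that $\lambda^{+}$, $b^{+}$ and $b^{-}$ are invariant under simultaneous scaling of $\binom{c}{p}$, so that Lemma~\ref{lem:ind_lambda_using_HN}(d) reads
\begin{equation*}
\lambda(Q)=\sum_{k=1}^{h}\lambda^{+}\binom{c^{(k)}}{p^{(k)}}-b^{-}\binom{c^{(1)}}{p^{(1)}}-\Big[\binom{c^{(1)}}{p^{(1)}}=\binom{3}{2}\Big].
\end{equation*}
Since every nontrivial pair has $\lambda^{+}\geq 1$ and the two subtracted terms lie in $\{0,1\}$, this gives $\lambda\leq\sum_{k}\lambda^{+}\leq\lambda+2\leq 9$, which bounds both $h$ and, for each $k$, the length $\#|\binom{c^{(k)}}{p^{(k)}}|$ of the Euclidean algorithm of the reduced pair. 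For each value $\ell$ I would tabulate once and for all the finitely many coprime pairs $\binom{c}{p}$ with $\lambda^{+}\binom{c}{p}=\ell$, recording also their $b^{\pm}$ and their inductance contribution from Lemma~\ref{lem:ind_lambda_using_HN}(a),(c).

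Then I would assemble all admissible sequences by a bounded branch-and-prune organised by $\lambda\in\{1,\dots,7\}$ and by $h$: choose a coprime top pair, scale a compatible lower sequence by $c^{(h)}$, impose $\gcd(c^{(k)},p^{(k)})=c^{(k+1)}$ together with $c^{(1)}>p^{(1)}$, and keep exactly those whose total $\lambda$ matches the target. Lemma~\ref{lem:lambda_bounds}(b) lets one reduce to chains (hence $h=1$) whenever the inductance is small enough, which shortens the search in the low-$\lambda$ cases. For $\lambda=6$ and $\lambda=7$ the two extra hypotheses do the decisive cutting: the inductance formula writes $\ind(Q)$ as a sum of $h+1$ positive terms, one per maximal admissible twig, and once $\lambda^{+}$ is bounded these are bounded below, so $\ind(Q)\leq\tfrac32$ caps the number of branchings and hence $h$; while the condition $\Delta^{-}(Q)=0$ translates, through the criterion of Lemma~\ref{lem:c,p}(b) for a twig disjoint from the $(-1)$-curve to be a $(-2)$-chain, into explicit congruence constraints on the $p^{(k)}\bmod c^{(k)}$ that eliminate the longer sequences. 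Reading off each survivor recovers the displayed list.

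The hard part will not be any single idea but the exhaustiveness and bookkeeping of the enumeration: one must be certain that every gluing compatible with the $\gcd$-conditions and the scale factor $c^{(h)}$ has been produced, which is delicate for the longer chains ($h=3,4$) where several multiplicative compositions interact with the coprimality constraints. A secondary point requiring care is the realizability direction—checking that each listed sequence genuinely arises from a divisor $Q$ contracting to a smooth point with a unique non-tip $(-1)$-curve, rather than merely satisfying the numerical identities—though this follows by reversing the blow-up reconstruction encoded in Definition~\ref{def:HN}.
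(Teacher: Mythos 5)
Your proposal is correct and follows essentially the same route as the paper: both reduce the lemma to a finite, formula-driven enumeration of standard HN-sequences built by composing coprime pairs under the $\gcd$-compatibility constraints of Definition \ref{def:HN}, and then filter the candidates by computing $\lambda$, $\ind$, $b_0(\Delta)$ and $s$ via Lemma \ref{lem:ind_lambda_using_HN}. The only (immaterial) difference is how finiteness is certified — you bound $\sum_k\lambda^+\leq\lambda+2$, whereas the paper bounds $\#Q\leq 2\lambda(Q)$ via $b_0(\Delta(Q))\leq h+1\leq\lambda(Q)$ and enumerates divisors by component count.
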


\begin{proof} We explain how to obtain the list. First of all, it is easy to create inductively a list of HN-pairs $\binom{c}{p}$ with a given number of components of the chain $|\binom{c}{p}|$, cf.\ \eqref{eq:|(c,p)|}. Indeed, $\#|\binom{c}{p}|=k$ if and only if the Euclidean algorithm for $(c,p)$ reaches $0$ after exactly $k$ steps. It follows that having a complete list of pairs with $\#|\binom{c}{p}|=k$ and replacing each $\binom{c}{p}$ with $\{\binom{c+p}{p}, \binom{c+p}{c}\}$ we obtain a complete list of pairs with $\#|\binom{c}{p}|=k+1$. By Definition \ref{def:HN} the HN-sequence for a divisor $Q$ with $h\geq 2$ is obtained from the HN-sequence of the divisor $\sigma(Q)$ and the HN-pair of $\Exc \sigma$, which allows to list sequences of standard HN-pairs for all divisors with a given number of components. Finally, for $Q\neq [3,1,2]$ we have $h+1\leq \lambda(Q)$, so $\#Q-\lambda(Q)=b_0(\Delta(Q))\leq h+1\leq \lambda(Q)$, hence $Q$ can be found among divisors with at most $2\lambda(Q)$ components. Conversely, given a sequence of HN-pairs of some $Q$ we may compute $\lambda(Q)$, $\ind(Q)$, $b_0(\Delta(Q))$ and $s(Q)$ using formulas from Lemma \ref{lem:ind_lambda_using_HN}(d).
\end{proof}

\begin{rem} If a chain $Q$ with a standard HN-pair $\binom{c}{p}$ is blown up in a node belonging to the unique $(-1)$-curve then the HN-pair of the resulting chain is not necessarily one of $\binom{c+p}{c}$ or $\binom{c+p}{p}$. For instance, for $\binom{5}{3}$ the new HN-pairs are $\binom{8}{5}$ and $\binom{7}{4}$. The geometry behind  the change $\binom{c}{p}\mapsto \binom{c+p}{p}$ is that to $Q$ we add a $(-2)$-tip at the side with discriminant $c$. Similarly, the change $\binom{c}{p}\mapsto \binom{c+p}{c}$ is related to adding a $(-q)$-tip at the side with discriminant $p$, where $q=1+\lceil c/p\rceil $.
\end{rem}

\smallskip 
\subsection{Further bounds on the geometry of the boundary}

In this section we work more with the minimal log resolution $\pi\:(X,D)\to (\P^2,\bar E)$ instead of the minimal weak resolution $\pi_0\:(X_0,D_0)\to (\P^2,\bar E)$. We denote by $Q_j'$ the exceptional divisor of $\pi$ over the cusp $q_j\in \bar E$, $j=1,\ldots,c$. We define the sequence of standard HN-pairs of $q_j\in \bar E$ as the sequence of standard HN-pairs of $Q_j'$ and we decorate the respective quantities $c^{(k)}$, $p^{(k)}$ and $h$ with lower index $j\in \{1,\ldots,c \}$.  Clearly, $\Delta(Q_j')=Q_j'\wedge \Delta_0'$, $s_j=s(Q_j')$ and $\ind_j=\ind(Q_j')$. For cusps of multiplicity bigger than $2$ we have $(\Upsilon_0+\Delta_0^+)\wedge Q_j=0$, so by \eqref{eq:lambda_def}
\begin{equation}\label{eq:lambda_rel_to_Q'}
\lambda_j=\#Q_j-b_0(Q_j\wedge \Delta_0^-)+\tau_j-s_j=\#Q_j'-b_0(Q_j'\wedge \Delta_0'),
\end{equation} hence by \eqref{eq:def_s,lambda,M,I} for all cusps  $\lambda(Q_j')=\lambda_j$. We put (cf.\ \eqref{eq:def_s,lambda,M,I})
\begin{equation*}
r_j=r(Q_j'), \ \ M_j=M(Q_j') \text{\ \ and\ \ } I_j=I(Q_j').
\end{equation*}
The numbers $M_j$ and $I_j$ compute the sums of multiplicities and, respectively, squares of multiplicities, of successive proper transforms of $q_j\in\bar E$ under blow-ups constituting $\pi^{-1}$ (see \cite[Corollary 3.2]{Palka-Coolidge_Nagata1}), so the genus-degree formula takes the form
\begin{equation}\label{eq:genus}
(\deg \bar E-1)(\deg \bar E-2)=\sum_{j=1}^c (I_j-M_j).
\end{equation}

Proposition \ref{prop:c<=5} gives $c\leq 5$, hence we may and will assume that $c=5$ or $c=4$.

\begin{cor}\label{cor:omega_ind} For a positive integer $k$ let $\w_k$ denote the number of cusps of $\bar E$ for which $\lambda_j=k$. Then $\sum_{k=1}^{\8} k\w_{k}=c$ and the following inequality holds:
\begin{equation}\label{eq:main_ind_2}
\frac{5}{6}\w_1+\frac{11}{10}\w_2+\frac{11}{12}\w_3+\frac{11}{15}\w_4+\frac{19}{28}\w_5+\frac{29}{45}\w_6+\frac{1}{2}\sum_{k=7}^\8 \w_k\leq \sum_{j=1}^c \ind_j.
\end{equation}
\end{cor}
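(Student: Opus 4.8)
The plan is to reduce the displayed inequality \eqref{eq:main_ind_2} to a pointwise lower bound on the inductance $\ind_j=\ind(Q_j')$ in terms of the combinatorial quantity $\lambda_j$, and then to sum over the cusps. The counting assertion is immediate from the definition: by \eqref{eq:lambda_rel_to_Q'} each of the $c$ cusps $q_j$ has a well-defined value $\lambda_j=\lambda(Q_j')\geq 1$, and since $\omega_k$ collects precisely those cusps with $\lambda_j=k$, the classes $\{\,q_j:\lambda_j=k\,\}$ partition the set of cusps.

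For the main inequality I would argue as follows. Each $Q_j'$ is the reduced exceptional divisor of the minimal log resolution over $q_j$; it contracts to the smooth point $q_j\in\P^2$ and contains a unique $(-1)$-curve, namely the one produced last, which is not a tip of $Q_j'$ (it separates the proper transform $E$ from the remaining exceptional components, so it has at least two neighbours inside $Q_j'$). Hence $Q_j'$ satisfies the hypotheses of Lemma \ref{lem:lambda_bounds}, and part (a) of that lemma gives
\[
\ind_j=\ind(Q_j')\geq \nu(\lambda(Q_j'))=\nu(\lambda_j),
\]
where I use the identities $\ind_j=\ind(Q_j')$ and $\lambda_j=\lambda(Q_j')$ recorded after \eqref{eq:ind} and in \eqref{eq:lambda_rel_to_Q'}. (By Theorem \ref{thm:almost_minimal} one has $n=0$, so these are exactly the inductances entering the log BMY bound \eqref{eq:ind}; this is how the corollary will subsequently be used.) Summing over $j$ and grouping the cusps according to the value of $\lambda_j$ yields
\[
\sum_{j=1}^c \ind_j\ \geq\ \sum_{k\geq 1}\nu(k)\,\omega_k.
\]

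It then remains only to replace each $\nu(k)$ by the constant appearing in the statement. For $k\in\{1,\dots,6\}$ I would substitute the explicit values $\nu(1)=\tfrac56$, $\nu(2)=\tfrac{11}{10}$, $\nu(3)=\tfrac{11}{12}$, $\nu(4)=\tfrac{11}{15}$, $\nu(5)=\tfrac{19}{28}$, $\nu(6)=\tfrac{29}{45}$ recorded right after the definition of $\nu$. For $k\geq 7$ I would check that $\nu(k)\geq\tfrac12$: writing $\nu(\lambda)=\tfrac12+\tfrac{1}{\lambda-1}-\tfrac{1}{2(2\lambda-3)}$ for $\lambda\geq 4$, the inequality $\nu(\lambda)>\tfrac12$ is equivalent to $2(2\lambda-3)>\lambda-1$, i.e.\ to $3\lambda>5$, which holds for all $\lambda\geq 4$. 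Thus $\nu(k)\omega_k\geq\tfrac12\omega_k$ for every $k\geq 7$, and combining this with the substitutions above produces exactly \eqref{eq:main_ind_2}. There is essentially no serious obstacle here: the whole content is already packaged in Lemma \ref{lem:lambda_bounds}(a), and the proof is bookkeeping on top of it. The only mild points to watch are the verification that every $Q_j'$ meets the hypotheses of that lemma (unique $(-1)$-curve, not a tip) and the elementary monotonicity estimate $\nu(\lambda)>\tfrac12$ for $\lambda\geq 4$.
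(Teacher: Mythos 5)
Your proposal is correct and follows essentially the same route as the paper: the paper's proof likewise reduces \eqref{eq:main_ind_2} to the pointwise bounds $\ind_j\geq\nu(\lambda_j)$ supplied by Lemma \ref{lem:lambda_bounds} (together with Proposition \ref{prop:lambda=3} and Lemma \ref{lem:basic_bounds}(a),(e), which that lemma already subsumes for $\lambda<4$), the only cosmetic difference being that for $\lambda_j\geq 7$ the paper quotes the uniform bound $\ind_j>\frac12$ from Lemma \ref{lem:log BMY}(a) whereas you verify $\nu(k)>\frac12$ arithmetically. Note only that the displayed counting identity should read $\sum_{k\geq 1}\w_k=c$ (as you in fact prove and as the paper uses elsewhere); the factor $k$ in the statement is a typo.
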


\begin{proof}We have $\ind_j>\frac{1}{2}$ for any cusp by Lemma \ref{lem:log BMY}(a). Using the bounds from Lemmas \ref{lem:lambda_bounds}, \ref{prop:lambda=3} and \ref{lem:basic_bounds}(a)(e) we get the required inequality.
\end{proof}

\begin{lem}\label{lem:n=0_new} Let the notation be as above and let $\delta=\delta(\Delta_0^-)$.  We have:
\begin{enumerate}[(a)]
\item $\deg \bar E=\frac{3}{2}+\frac{1}{2}\sqrt{1+4\sum_{j=1}^c(I_j-M_j)}$,
\item $p_2=9-2c+\sum_{j=1}^c(M_j-r_j)-3\deg \bar E$,
\item $\sum_{j=1}^c\ind_j\leq 5-p_2$,
\item $\delta\leq 7+3p_2-\sum_{j=1}^c\lambda_j$.
\end{enumerate}
\end{lem}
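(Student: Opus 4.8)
The plan is to derive all four parts directly from identities and inequalities already established, so the argument is essentially bookkeeping; the only delicate point is the degree computation in (b).

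For (a), I would start from the genus-degree formula \eqref{eq:genus}, which reads $(\deg\bar E-1)(\deg\bar E-2)=\sum_{j=1}^c(I_j-M_j)$. Writing $d=\deg\bar E$, this is the quadratic equation $d^2-3d+2-\sum_{j=1}^c(I_j-M_j)=0$, whose roots are $\tfrac{3}{2}\pm\tfrac{1}{2}\sqrt{1+4\sum_{j=1}^c(I_j-M_j)}$. Since a rational cuspidal curve has $\deg\bar E\geq 3>\tfrac{3}{2}$, the degree is the larger root, which is exactly the asserted formula.

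For (b) I would combine three inputs. First, the formula \eqref{eq:p2_formula}, namely $p_2=-E^2+7-2c-\sum_{j=1}^c r_j$; its derivation in the proof of Lemma \ref{lem:elements}(f) uses only \eqref{eq:KQ_inner_bl} and the equality $\#D=\rho(X)=10-K^2$, so it is valid for any number of cusps and in particular for our $c\in\{4,5\}$. Second, since $I_j$ is the sum of squares of the multiplicities of the successive proper transforms of $q_j$ (see \cite[Corollary 3.2]{Palka-Coolidge_Nagata1}) and every centre of $\pi$ lies over a cusp, the standard blow-up computation gives $E^2=(\deg\bar E)^2-\sum_{j=1}^c I_j$, hence $-E^2=\sum_{j=1}^c I_j-d^2$. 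Third, \eqref{eq:genus} rearranges to $\sum_{j=1}^c I_j=d^2-3d+2+\sum_{j=1}^c M_j$. Substituting the latter two identities into \eqref{eq:p2_formula} and cancelling $d^2$ yields $p_2=9-2c-3d+\sum_{j=1}^c(M_j-r_j)$, which is (b).

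Part (c) is the instance $i=0$ of the logarithmic Bogomolov--Miyaoka--Yau inequality \eqref{eq:ind}, equivalently Lemma \ref{lem:log BMY}(b) with $i=0$: since $c\neq 1$ we have $\ind(D)=\sum_{j=1}^c\ind_j$, so the inequality reads $\sum_{j=1}^c\ind_j+p_2\leq 5$. Part (d) is a restatement of \eqref{eq:lambda}: because $c\geq 4$, Proposition \ref{lem:basics}(c) rules out a $\C^{**}$-fibration of $\P^2\setminus\bar E$, so \eqref{eq:lambda} applies and gives $\sum_{j=1}^c\lambda_j\leq 3p_2+7-\delta$, that is, $\delta\leq 7+3p_2-\sum_{j=1}^c\lambda_j$. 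The main thing to watch is (b): one must track the interpretation of $I_j$, $M_j$ as sums of (squares of) multiplicities and confirm that \eqref{eq:p2_formula} does not depend on the cusp count; the other three parts are immediate specializations of formulas proved earlier.
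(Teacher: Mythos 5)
Your proof is correct and follows essentially the same route as the paper: (a) is the genus--degree formula \eqref{eq:genus}, (b) combines \eqref{eq:p2_formula} with the expression of $-E^2$ in terms of $\sum_j M_j$, and (c), (d) are exactly \eqref{eq:ind} with $i=0$ and \eqref{eq:lambda}. The only (harmless) difference is that in (b) you re-derive the identity $-E^2=-3\deg\bar E+2+\sum_{j=1}^c M_j$ from $E^2=(\deg\bar E)^2-\sum_j I_j$ and \eqref{eq:genus}, where the paper simply cites \cite[Corollary 3.3(i)]{Palka-Coolidge_Nagata1}.
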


\begin{proof}
Part (a) follows from \eqref{eq:genus}. By \cite[Corollary 3.3(i)]{Palka-Coolidge_Nagata1}, $-E^2=-3\deg \bar E+2+\sum_{j=1}^cM_j$. On the other hand, by \eqref{eq:p2_formula}, $p_2=K\cdot (K+D)=-E^2+7-2c-\sum_{j=1}^cr_j$, so we obtain (b). Parts (c) and (d) are \eqref{eq:ind} and \eqref{eq:lambda}, respectively.
\end{proof}

\begin{rem}\label{rem:bounds} From the above bounds we can derive further ones. We will not use them, but we discuss them to convince the reader that, given Theorem \ref{thm:almost_minimal}, the situation is numerically completely bounded. From the definition of $\lambda_j$ we have $\lambda_1+\cdots+\lambda_c=\#D-1-b_0(\Delta_0')-c_0'$, hence $\#D\leq 3p_2+8+c_0'+b_0(\Delta_0')-\delta$ by Lemma \ref{lem:n=0_new}(d), where $c_0'$ is the number of ordinary cusps ($c_0'=\w_1$). Since $\ind(D)\geq \frac{1}{2}b_0(\Delta_0')+\frac{1}{3}c_0'$ by Lemma \ref{lem:log BMY}(c), Lemma \ref{lem:n=0_new}(c) gives $$b_0(\Delta_0')\leq 10-\frac{2}{3}c_0'-2p_2$$ and hence $\#D\leq p_2+18+\lfloor \frac{1}{3}\w1-\delta\rfloor\leq 21$, as $p_2\leq 2$ by Lemma \ref{lem:basic_bounds}(f). It follows that $\#Q_j'\leq 20-3\cdot 3=11$. Since $\#Q_j'\neq 1$, we have $r_j\geq 1$, so by \eqref{eq:KQ_inner_bl} $$K\cdot Q_j'+3\leq \#Q_j'\leq 11.$$ From \cite[Lemma 2.4(ii)]{KoPa-CooligeNagata2} we obtain $\mu(q_j)\leq F_{K\cdot Q_j'+3}\leq F_{\#Q_j'}\leq F_{11}=89$, where $F_k$ denotes the $k$'th Fibonacci number ($F_1=F_2=1$). Then by the Matsuoka-Sakai inequality \cite{MaSa-degree_of_cusp_curves} $$\deg \bar E\leq 3\max\{\mu(q_1),\ldots,\mu(q_c)\}-1\leq 266.$$
\end{rem}

\begin{prop}\label{prop:not_5} Let $\bar E\subseteq \P^2$ be a rational cuspidal curve with $c\geq 4$ cusps. Put $\Lambda=(\lambda_1,\ldots,\lambda_c)$ with $\lambda_1\geq \ldots\geq \lambda_c$. Then we have $c=4$, $p_2\leq 1$ and $h_1\leq 2$. Moreover, if $\lambda_1\geq 5$ then $p_2=1$ and one of the following holds:
\begin{enumerate}[(1)]
\item $\Lambda=(7,1,1,1)$, $\delta=0$, $\ind_1\leq \frac{3}{2}$,
\item $\Lambda=(6,2,1,1)$, $\delta=0$, $\ind_1\leq \frac{37}{30}$,
\item $\Lambda=(5,3,1,1)$, $\delta=0$, $\ind_1\leq \frac{17}{12}$,
\item $\Lambda=(5,2,2,1)$, $\delta=0$, $\ind_1\leq \frac{29}{30}$,
\item $\Lambda=(6,1,1,1)$, $\delta\leq 1$, $\ind_1\leq \frac{3}{2}$,
\item $\Lambda=(5,2,1,1)$, $\delta\leq 1$, $\ind_1\leq \frac{37}{30}$,
\item $\Lambda=(5,1,1,1)$, $\delta\leq 2$, $\ind_1\leq \frac{3}{2}$,
\end{enumerate}
\end{prop}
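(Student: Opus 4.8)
The plan is to exploit that, by Theorem \ref{thm:almost_minimal}, the process of almost minimalization is trivial: since $(X_0,\frac12 D_0)$ is almost minimal we have $n=0$, so $\psi=\id$, $\bar Q_j=Q_j$, and the invariants reduce to $\lambda_j=\lambda(Q_j')$ and $\ind_j=\ind(Q_j')$. This converts the estimates of Section 4 into statements about the fixed divisors $Q_j'$. First I would assemble the three master inequalities valid for $n=0$: the logarithmic Bogomolov--Miyaoka--Yau bound $\sum_{j=1}^c\ind_j\le 5-p_2$ (Lemma \ref{lem:n=0_new}(c), i.e.\ \eqref{eq:ind}), the $\lambda$-bound $\sum_{j=1}^c\lambda_j\le 3p_2+7-\delta$ (Lemma \ref{lem:n=0_new}(d), i.e.\ \eqref{eq:lambda}), and the per-cusp lower bound $\ind_j\ge\nu(\lambda_j)$, which holds for $\lambda_j\ge 4$ by Lemma \ref{lem:lambda_bounds}(a) and for $\lambda_j\le 3$ by Proposition \ref{prop:lambda=3} and Lemma \ref{lem:basic_bounds}(e); recall also $p_2\le 2$ (Lemma \ref{lem:basic_bounds}(f)) and $\sum_k\w_k=c$ (Corollary \ref{cor:omega_ind}).

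The numerical core is to combine these. Forming $3\sum_j\ind_j+\sum_j\lambda_j\le 22-\delta$ and using $3\nu(k)+k\ge\tfrac72$ for every $k$ gives $\tfrac72 c\le 22-\delta$, hence only $c\le 6$; together with Proposition \ref{prop:c<=5} this recovers $c\le 5$ but no more. To locate the obstructing configurations I would carry out the finite optimization of $\sum_j\nu(\lambda_j)$ under the budget $\sum_j\lambda_j\le 3p_2+7$, balancing the costly ordinary cusps ($\nu(1)=\tfrac56$) against a few moderate cusps (where $\nu$ is smallest, near $\lambda\in\{5,6,7\}$). This kills most $(c,p_2)$-combinations with $c=5$, or with $c=4$ and $p_2=2$, and reduces the rest to a short list of nearly tight profiles such as $\Lambda=(6,1,1,1,1)$ for $c=5,\,p_2=1$ and $\Lambda=(6,5,1,1)$ for $c=4,\,p_2=2$. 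I expect the main difficulty to be exactly these borderline profiles: for them the two linear inequalities are consistent to within the rounding of the $\nu$-values, so they cannot be excluded by numerics alone.

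To eliminate the survivors I would pass to Hamburger--Noether data. Lemma \ref{lem:divisor_list_lambda} gives, for each admissible $\lambda_j\le 7$ (under the side hypotheses on $\ind$ and $\Delta^-$ that hold in the tight cases), the finite list of HN-sequences of $Q_j'$, and hence the integers $M_j$, $I_j$, $r_j$. Feeding these into Lemma \ref{lem:n=0_new}(a),(b) and the genus--degree identity \eqref{eq:genus} produces, for every candidate tuple of cusps, a value of $\deg\bar E$ and of $p_2$; integrality of $\deg\bar E$ and compatibility of the computed $p_2$ with the assumed one remove a large part of the list arithmetically. The genuinely geometric residue is then handled as in the proof of Proposition \ref{prop:c<=5}: projecting $\bar E$ from a suitable cusp of multiplicity $\mu$ gives a map of degree $\deg\bar E-\mu$, and the Hurwitz-type bound of Lemma \ref{lem:Hurwitz} limits the number of ramification points, i.e.\ the number of cusps, below $c$, contradicting the profile. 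This step, which forces $c=4$ and $p_2\le 1$, is the heart of the argument and the main obstacle, precisely because it is where the pure numerics run out.

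Finally, with $c=4$ and $p_2\le 1$ in hand, I would read off the classification in the case $\lambda_1\ge 5$. Here $\sum_j\lambda_j\le 3p_2+7-\delta\le 10-\delta$ with $\lambda_2\ge\lambda_3\ge\lambda_4\ge 1$, and the lower bounds $\nu(\lambda_j)$ on $q_2,q_3,q_4$ together with $\sum_j\ind_j\le 5-p_2$ force $p_2=1$ and leave exactly the seven profiles (1)--(7); the stated bounds on $\ind_1$ follow by subtracting the $\nu$-contributions of the three smaller cusps from $5-p_2$, and the admissible range of $\delta$ is read off from Lemma \ref{lem:n=0_new}(d). The bound $h_1\le 2$ comes from the same data: in Lemma \ref{lem:divisor_list_lambda} every HN-sequence with $h\ge 3$ has at least two branching components, hence at least $h+1\ge 4$ maximal admissible twigs, whose inductance contributions push $\ind(Q_1')$ above the value permitted by $\sum_j\ind_j\le 5-p_2$; thus the largest cusp carries at most one branching component.
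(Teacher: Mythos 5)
Your overall strategy is the paper's: reduce to $n=0$ via Theorem \ref{thm:almost_minimal}, combine the three numerical constraints $\sum_j\ind_j\le 5-p_2$, $\sum_j\lambda_j\le 3p_2+7-\delta$ and $\ind_j\ge\nu(\lambda_j)$ to isolate a short list of profiles, then eliminate the survivors by running the HN-pair lists of Lemma \ref{lem:divisor_list_lambda} through the genus--degree formula, the $p_2$-formula of Lemma \ref{lem:n=0_new}(b) and Hurwitz-type projections. That part, including the derivation of the seven profiles and the $\ind_1$ and $\delta$ bounds for $\lambda_1\ge 5$, is correct and matches the paper's proof.

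The genuine gap is in your justification of $h_1\le 2$. You claim that $h_1\ge 3$ forces at least four maximal admissible twigs whose inductance contributions exceed ``the value permitted by $\sum_j\ind_j\le 5-p_2$.'' This fails precisely where it is needed most: the statement $h_1\le 2$ is unconditional, so it must also cover $\lambda_1=4$ with $p_2=0$ and $\Lambda=(4,1,1,1)$, where the permitted value is $\ind_1\le 5-3\cdot\frac{5}{6}=\frac{5}{2}$. The two $h=3$ entries with $\lambda=4$ in Lemma \ref{lem:divisor_list_lambda}, namely $\binom{12}{8}\binom{4}{2}\binom{2}{1}$ and $\binom{20}{8}\binom{4}{2}\binom{2}{1}$, have inductances $\frac{11}{6}$ and $\frac{21}{10}$ by Lemma \ref{lem:ind_lambda_using_HN}(a), both strictly below $\frac{5}{2}$, so no inductance estimate rules them out. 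The paper closes this case by a different mechanism: here $\sum_j\lambda_j=7=3p_2+7$ forces $\delta=0$, i.e.\ $\Delta_0^-=0$, so the first two HN-pairs of $q_1$ cannot produce a $(-2)$-tip and hence contribute at least $4+3$ components, giving $\#Q_1'\ge 9$, while $\#Q_1'=\lambda_1+s_1\le 8$. (Your inductance argument does suffice when $p_2=1$, since then every admissible profile gives $\ind_1\le\frac{8}{5}<\frac{11}{6}$.) Your arithmetic sieve via Lemma \ref{lem:n=0_new}(a),(b) would in fact also kill these two divisors --- the first yields $p_2=-4$ and the second a non-integral degree --- but you invoke that sieve only for the $c=5$ and $p_2=2$ profiles, not for the $h_1\le 2$ step, so as written the argument for $h_1\le 2$ is incomplete.
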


\begin{proof}
By Proposition \ref{prop:c<=5}, $c\leq 5$. Suppose that $c=5$. Elementary calculations show that the only cases satisfying Corollary \ref{cor:omega_ind} and Lemma \ref{lem:n=0_new}(c),(d) are:
\begin{enumerate}[(a)]
\item $p_2=1$, $\Lambda=(6,1,1,1,1)$,
\item $p_2=0$, $\Lambda=(3,1,1,1,1)$,
\item $p_2=0$, $\Lambda=((2)_{\w_2},(1)_{5-\w_2})$, where $\w_2\in\{0,1,2\}$.
\end{enumerate}
Put $d=\deg \bar E$. In cases (a) and (b) we have $|\Lambda|-3p_2=7$, where $|\Lambda|=\lambda_1+\cdots+\lambda_c$, so by Lemma \ref{lem:n=0_new}(d), $\Delta_0^-=0$. In case (b) Proposition \ref{prop:lambda=3} implies that either $q_1$ is semi-ordinary or $Q_1=C_1$, hence $q_1$ is described by one of the HN-pairs $\binom{7}{2}$ or $\binom{4}{3}$. In both cases \eqref{eq:genus} gives $(d-1)(d-2)=14$, which is impossible. In case (a) Lemma \ref{lem:n=0_new}(c) gives $\ind_1\leq \frac{2}{3}$, so by Lemma \ref{lem:lambda_bounds}(c), $q_1$ is described by the HN-pair $\binom{9}{5}$, hence $(d-1)(d-2)=40$, which is impossible.

In case (c) the minimal log resolution is described by $\w_2$ pairs $\binom{5}{2}$ and $5-\w_2$ pairs $\binom{3}{2}$. We compute $\sum_{j=1}^5M_j=6\w_2+4(5-\w_2)=2\w_2+20$ and $\sum_{j=1}^5I_j=10\w_2+6(5-\w_2)=4\w_2+30$, hence by \eqref{eq:genus}, $(d-1)(d-2)=2\w_2+10$, which gives $\w_2=1$ and $d=5$. But after blowing up $q_1$ once we get a $\P^1$-fibration for which the proper transform of $E$ meets a general fiber three times. Then the Hurwitz formula, Lemma \ref{lem:Hurwitz}, gives $c\leq 2\cdot 3-2=4$; a contradiction.

Thus $c=4$. Suppose that $p_2=2$. From Corollary \ref{cor:omega_ind} and Lemma \ref{lem:n=0_new}(c),(d) we have $\Lambda=(6,5,1,1)$ or $\Lambda=(7,4,1,1)$ or $\Lambda=(\lambda_1,1,1,1)$ for some $\lambda_1\in \{7,8,9,10\}$. But in the latter case $\ind_1+3\cdot \frac{5}{6}\leq 5-p_2=3$, hence $\ind_1\leq \frac{1}{2}$, which is impossible by Lemma \ref{lem:log BMY}(a). By Lemma \ref{lem:n=0_new}(c) we have $\ind_1+\ind_2\leq 3-2\cdot \frac{5}{6}=\frac{4}{3}$. Then Lemma \ref{lem:lambda_bounds} gives a contradiction in the second case and in the first case gives $Q_1=[5,1,2,2,2,3]$ and hence $\ind_2\leq \frac{2}{3}+\frac{1}{45}$, which results with $\ind_2<\nu(5)+\frac{1}{96}$. Under the latter inequality, which is only slightly weaker than $\ind(Q_2')\leq \nu(5)$, the arguments in the proof of Lemma \ref{lem:lambda_bounds}(a) show that $Q_2=[4,1,2,2,3]$. Then \eqref{eq:genus} gives $(d-1)(d-2)=54$; a contradiction. 

Thus $p_2\leq 1$. Suppose that $h_1\geq 3$ and $\delta=0$. Then $q_1\in \bar E$ is not semi-ordinary and $b_0(Q_1'\wedge \Delta_0')=s_1\leq 1$, so $\#Q_1'=\lambda_1+s_1\leq 8$. Since $\delta=0$, the first two HN-pairs of $q_1$ cannot create a $(-2)$-tip, so they contain at least $4$ and $3$ components, respectively, hence $\#Q_1'\geq 4+3+2=9$; a contradiction.

Suppose that $h_1\geq 3$ and $\delta>0$. Since $h_1\geq 3$, we have $\lambda_1\geq 4$ by Proposition \ref{prop:lambda=3}. By Lemma \ref{lem:n=0_new}(d), $p_2=1$, so $\ind_1+\ldots+\ind_4\leq 4$ and $|\Lambda|\leq 9$, hence $\lambda_1\leq 6$ and $\lambda_2\leq 3$. Then $\ind_j\geq \frac{5}{6}$ for $j\in\{2,3,4\}$, so $\ind_1\leq \frac{3}{2}$. But using Lemma \ref{lem:divisor_list_lambda} we check that for $\lambda_1\in\{4,5,6\}$ the latter inequality fails in all cases with $h_1\geq 3$; a contradiction.

Thus $h_1\leq 2$. The cases listed in the statement of the proposition are the only ones with $\lambda_1\geq 5$ which satisfy Lemma \ref{lem:n=0_new}(d). The bound on $\ind_1$ follows from Lemma \ref{lem:n=0_new}(c).
\end{proof}

\begin{ex}\label{ex:Q4} 
Consider the quintic
\begin{equation}
\Qa=\{27 x^5+18 x^3 y z-2 x^2 y^3-x^2 z^3+2 x y^2 z^2-y^4 z=0\}\subseteq \P^2.
\end{equation} It has parameterization $\varphi\:\P^1\to \P^2$: 
\begin{equation}
\varphi([s:t])=[s^4t:s^2t^3-s^5:t^5+2s^3t^2].
\end{equation}
Its singular points are cusps $q_1=\varphi([0:1])$, $q_j=\varphi([\sqrt[3]{2}\theta^{j-2}:-1])$, $j=2,3,4$, where $\theta$ is a primitive third root of unity. The local analytic type of the cusp $q_1$ is $x^2=y^7$ and the local analytic type of the cusps $q_j$, $j=2,3,4$ is $x^2=y^3$.
\end{ex}


 \begin{proof}[Proof of Theorem \ref{thm:at_most_4_cusps}]
By Proposition \ref{prop:not_5}, $p_2\leq 1$ and either $\lambda_1\leq 4$ or $p_2=1$ and $(\lambda_1,\lambda_2,\lambda_3,\lambda_4)$ is one of the $7$ sequences listed there. In Lemma \ref{lem:divisor_list_lambda} we have a complete list of possible sequences of HN-pairs with small $\lambda$. Using \eqref{eq:def_s,lambda,M,I} and Lemma \ref{lem:ind_lambda_using_HN} in each case we may compute $M_j$, $I_j$, $\ind_j$, $\lambda_j$, $r_j$, $b_0(Q_j'\wedge \Delta_0')$, $s_j$ and $\displaystyle b_0(\Delta_0^-)=\sum_{j:\mu(q_j>2)}(b_0(Q_j'\wedge \Delta_0')-s_j)$, which then allows to compute subsequently $d:=\deg \bar E$ and $p_2$ as in Lemma \ref{lem:n=0_new}(a),(b). We then check whether the so-computed $p_2$ belongs to $\{0,1\}$ and whether the inequality in Lemma \ref{lem:n=0_new}(c) and the implication $b_0(\Delta_0^-)\neq 0 \Rightarrow 7+3p_2>\sum_{j=1}^4\lambda_j$, a consequence of the inequality  in Lemma \ref{lem:n=0_new}(d), hold. The only cases where this is true are:
\begin{enumerate}[(a)]
\item $\binom{5}{2}$, $\binom{5}{2}$, $\binom{3}{2}$, $\binom{3}{2}$, $(d,p_2)=(5,0)$, 
\item $\binom{7}{2}$, $\binom{3}{2}$, $\binom{3}{2}$, $\binom{3}{2}$, $(d,p_2)=(5,0)$,  
\item $\binom{7}{2}$, $\binom{4}{3}$, $\binom{4}{3}$, $\binom{3}{2}$, $(d,p_2)=(6,1)$, 
\item $\binom{5}{3}$, $\binom{7}{2}$, $\binom{5}{2}$, $\binom{3}{2}$, $(d,p_2)=(6,1)$,
\item $\binom{8}{3}$, $\binom{7}{3}$, $\binom{3}{2}$, $\binom{3}{2}$, $(d,p_2)=(7,1)$,  
\item $\binom{9}{2}$, $\binom{5}{3}$, $\binom{3}{2}$, $\binom{3}{2}$, $(d,p_2)=(6,1)$,  
\item $\binom{11}{2}$, $\binom{4}{3}$, $\binom{3}{2}$, $\binom{3}{2}$, $(d,p_2)=(6,1)$,  
\item $\binom{15}{2}$, $\binom{3}{2}$, $\binom{3}{2}$, $\binom{3}{2}$, $(d,p_2)=(6,1)$, 
\item $\binom{4}{3}$, $\binom{3}{2}$, $\binom{3}{2}$, $\binom{3}{2}$, $(d,p_2)=(5,0)$, 
\item $\binom{4}{3}$, $\binom{4}{3}$, $\binom{4}{3}$, $\binom{3}{2}$, $(d,p_2)=(6,1)$,  
\item $\binom{5}{3}$, $\binom{5}{2}$, $\binom{4}{3}$, $\binom{3}{2}$, $(d,p_2)=(6,1)$,  
\item $\binom{5}{3}$, $\binom{5}{3}$, $\binom{3}{2}$, $\binom{3}{2}$, $(d,p_2)=(6,1)$.  
\end{enumerate}
Let $(\mu_j,\mu_j',\ldots )$ denote the multiplicity sequence of $q_j\in \bar E$. By the Hurwitz formula applied to the $\P^1$-fibration of the blow-up of $\P^2$ at $q_1$ we have $\sum_{j=2}^4(\mu(q_j)-1)+(\mu_1'-1)\leq 2(d-\mu_1-1)$. This inequality fails in cases (i), (j), (k), (l).

In case (h) all cusps are semi-ordinary, so the Matsuoka-Sakai inequality \cite{MaSa-degree_of_cusp_curves}, $d<3\max(\mu_1,\mu_2,\mu_3,\mu_4)$, fails.

In cases (c), (d), (e) and (f) we find a $(-1)$-curve $A\nsubseteq D$ on $X$ such that $A\cdot D=2$. Assume that such a curve exists. Since $X\setminus D$ contains no affine line, $A$ meets $D$ in two different points and $X\setminus (D+A)$ contains no affine line, so possibly after some number of inner contractions in $D+A$ the surface $(X,D+A)$ becomes almost minimal (see \cite[6.20, 6.24]{Fujita-noncomplete_surfaces}, \cite[\S 2.3.11]{Miyan-OpenSurf}). Since the contractions do not affect maximal twigs of $D+A$ and the self-intersection of the log canonical divisor, the log BMY inequality gives $(K+D+A)^2+\ind(D+A)\leq 3e_{top}(X\setminus~(D+A))$. We have $e_{top}(X\setminus (D+A))=e_{top}(X\setminus D)=1$ and $(K+D+A)^2=(K+D)^2+1=p_2-1$. Since in the cases considered $p_2=1$, we get 
\begin{equation*}\label{eq:ind(D+A)}
\ind(D+A)\leq 3.
\end{equation*}
Let $L_{i,j}\subseteq X$ for $i\neq j$ denote the proper transform of the line on $\P^2$ joining the cusps $q_i$ and $q_j$, and let $L_i\subseteq X$ denote the proper transform of the line tangent to $q_i\in \bar E$. 

In case (c) we take $A=L_{2,3}$. Since $\mu_2+\mu_3=6=d$, $A$ meets the $(-4)$-tips of $Q_2'$ and $Q_3'$ and it is a $(-1)$-curve with $A\cdot D=2$. We compute $\ind(D+A)=(\frac{5}{7}+\frac{1}{2})+\frac{2}{3}+\frac{2}{3}+\frac{5}{6} =\frac{71}{21}>3$; a contradiction.

In case (d) we take $A=L_1$. We have $(\pi(A)\cdot \bar E)_{q_1}\geq \mu_1+\mu_1'=5=d-1$, so since $X\setminus D$ contains no affine line, the equality holds, so $A$ is as needed. We compute $\ind(D+A)=\frac{2}{5}+(\frac{5}{7}+\frac{1}{2})+(\frac{3}{5}+\frac{1}{2})+\frac{5}{6}=3\frac{23}{42}>3$; a contradiction.

In case (e) we take $A=L_1$. We have $(\pi(A)\cdot \bar E)_{q_1}\geq \mu_1+\mu_1'=6=d-1$, so again equality holds and $A$ is as required. We compute $\ind(D+A)=(\frac{1}{2}+\frac{1}{3})+(\frac{4}{7}+\frac{2}{3})+\frac{5}{6}+\frac{5}{6}=3\frac{31}{42}>3$; a contradiction.

In case (f) we take $A=L_2$. Similarly, $(\pi(A)\cdot \bar E)_{q_2}=d-1$, so $A$ is as required. We have 
$\ind(D+A)=(\frac{7}{9}+\frac{1}{2})+\frac{2}{5}+\frac{5}{6}+\frac{5}{6}=3\frac{31}{90}>3$; a contradiction.

Consider case (a).   
We note as above that $(\pi(L_j)\cdot \bar E)_{q_j}=4$ and $L_{1,2}\cdot Q_j'=1$ for $j=1,2$. Blow up twice over each $q_j$ for $j=1,2$ and denote the first exceptional curve by $U_j$. Let $\ell_j$ be the proper transform of $L_j$ and let $\ell_{1,2}$ be the proper transform of $L_{1,2}$. The divisors $\ell_1+\ell_2$ and $U_1+2\ell_{1,2}+U_2$ are complete fibers of the same $\P^1$-fibration. The proper transform of $\bar E$ is a $2$-section and has a unique point of intersection with $\ell_{1,2}$. The latter is a third, besides $q_3$ and $q_4$, ramification point of the projection of the $2$-section onto the base of the fibration, which contradicts the Hurwitz formula.

Consider case (g).  
We have $\rho(X)=\#D=18$, so $K^2=-8$ and then $K\cdot D=p_2-K^2=9$ and $K\cdot E=9-K\cdot Q_2'=8$. Put $B=D-C_1'-C_2'$, where $C_j'$ is the unique $(-1)$-curve in $Q_j'$, and denote by $U$ the $(-4)$-curve which is a tip of $Q_2'$. We analyze the surface $(X,B)$. First of all we note that on $X_0$ the divisor $2K_0+\varphi_0(U)+E_0$ intersects trivially all components of $D_0$, hence is numerically trivial, because the components of $D_0$ generate $\NS(X_0)\otimes \Q$. This gives a numerical equivalence
\begin{equation}\label{eq:B}
2K+U+E\equiv \sum_{j\in \{1,3,4\}}(2C_j'+G_j),
\end{equation}
where $G_j\subseteq Q_j'$ for $j\in\{1,3,4\}$ is the $(-2)$-curve meeting $C_j'$. It follows that $\kappa(K_X+B)\geq 0$. We claim that $(X,B)$ is almost minimal. Suppose otherwise. By \cite[6.20]{Fujita-noncomplete_surfaces} there exists a $(-1)$-curve $L\nsubseteq B$ meeting at most two connected components of $B$ (hence $L\nsubseteq D$), each at most once. Intersecting \eqref{eq:B} with $L$ we have $L\cdot (2K+U+E)\geq 0$, hence $L\cdot (U+E)\geq 2$. It follows that $L\cdot U=L\cdot E=1$ and $L\cdot (D-C_1'-C_2'-U-E)=0$. Let $(X',B')$ be the image of $(X,B)$ after the contraction of $L$. Repeating the argument we see that $(X',B')$ is almost minimal and $\kappa(K_{X'}+B')\geq 0$. Then $B'$ has $4$ connected components, one of which has $5$ maximal twigs: $[3]$, $[2]$, $[3]$, $[2]$, $[3]$. But $e_{top}(X'\setminus B')=e_{top}(X\setminus D)-2=-1$, so the log BMY inequality (\cite[2.6.6.2]{Miyan-OpenSurf}) gives $0\leq -1+\frac{1}{c_1^{(1)}} +\frac{1}{p_1^{(1)}}+ \frac{1}{p_2^{(1)}} =-1+\frac{1}{11}+\frac{1}{2}+\frac{1}{3}$; a contradiction. Thus $(X,B)$ is almost minimal. We have $e_{top}(X\setminus B)=-1$, $K\cdot (K+B)=K\cdot (K+D)+2=p_2+2=3$ and $B\cdot (K+B)=-2b_0(B)=-10$. For an admissible chain $T$ which is a connected component of $B$ we can consider two orders $T=T_1+\ldots+T_m$ and $T^t=T_m+\ldots+T_1$ as in Section \ref{sec:prelim} and we put $\Bk T=\Bk' T+\Bk' T^t$, cf.\ \cite[\S 2.3.3.3]{Miyan-OpenSurf}, which gives $$\ind(T):=-\Bk^2T=\frac{d(T-T_1)+d(T-T_m)+2}{d(T)}.$$ We compute $\ind(Q_1'-C_1')=\frac{9+5+2}{11}+\frac{1+1+2}{2}=3\frac{5}{11}$, $\ind(Q_2'-C_1')=\frac{1+1+2}{4}+\frac{2+2+2}{3}=3$ and $\ind(E+Q_3'+Q_4')=2\cdot \frac{5}{6}=\frac{5}{3}$, hence $\ind(B)=8\frac{4}{33}$ and $(K+B-\Bk B)^2=(K+B)^2+\ind(B)=1\frac{4}{33}$. The log BMY inequality for $(X,B)$ gives $1\frac{4}{33}\leq 3\cdot (-1+\frac{1}{11}+\frac{1}{2}+\frac{1}{4}+\frac{1}{3})=\frac{23}{44}$; a contradiction.

Thus, we have shown that the only possibility is case (b). This is indeed the type of the quintic from Theorem \ref{thm:at_most_4_cusps}. We note that $L_1$ is a $(-1)$-curve meeting $Q_1=[1,2,2]$ in the middle  component. The contraction of $L_1+(Q_1-C_1)+Q_2+Q_3+Q_4$ transforms $E_0$, and hence $\bar E$, onto a tricuspidal quartic with all cusps ordinary and with the image of $C_1$ as a bitangent line. Using this transformation one proves the projective uniqueness of $\bar E$ of type (b), see \cite[Proposition 4.9]{PaPe_cuspidal-delPezzo}. The uniqueness follows also from the classification of rational cuspidal quintics in \cite{Namba_geometry_of_curves}. 
\end{proof}


\bibliographystyle{amsalpha}
\bibliography{C:/KAROL/PRACA/PUBLIKACJE/BIBL/bibl2019}
\end{document}